\documentclass[11pt]{article}
\usepackage{graphicx}
\usepackage{amsmath} 
\usepackage{wrapfig} 
\usepackage{amssymb} 
\usepackage[margin=1in]{geometry}
\usepackage[colorlinks]{hyperref}
\usepackage{enumitem}
\usepackage{titlesec}
\usepackage{cite}
\usepackage{setspace}
\usepackage{fancyhdr}
\usepackage{booktabs}
\usepackage{lipsum}   
\usepackage{lastpage}
\usepackage{etoolbox} 
\usepackage{amsthm}
\usepackage{tikz}
\usepackage{caption}
\usepackage{subcaption}
\usepackage{cleveref}
\usepackage{tikz-cd}
\usepackage{array}
\usepackage{makecell}
\usepackage{pifont}

\numberwithin{equation}{section}

\usepackage[labelfont=bf, textfont=bf]{caption} 

\newtheorem{theorem}{Theorem}[section]
\newtheorem{lemma}[theorem]{Lemma}
\newtheorem*{maintheorem}{Main Theorem}
\newtheorem{corollary}[theorem]{Corollary}
\newtheorem{proposition}[theorem]{Proposition}
\newtheorem{conjecture}[theorem]{Conjecture}
\theoremstyle{definition}

\newtheorem{question}[theorem]{Question}

\newtheorem{remark}{Remark}

\let\oldcite\cite
\renewcommand{\cite}[1]{\textup{\!\!\oldcite{#1}}}

\begin{document}
\title{Topological Insights and Combinatorial Frameworks for Twist Knot Complements}

\author{Biplab Basak$^1$, Mangaldeep Saha, and Ayushi Trivedi}
	
\date{\today}
	
\maketitle
	
\vspace{-10mm}
\begin{center}
		
\noindent {\small Department of Mathematics, Indian Institute of Technology Delhi, New Delhi, 110016, India.$^2$}

\footnotetext[1]{Corresponding author}
		
\footnotetext[2]{{\em E-mail addresses:}  \url{biplab@iitd.ac.in} (B. Basak), \url{maz238698@maths.iitd.ac.in} (M. Saha), \url{maz238453@maths.iitd.ac.in} (A. Trivedi).}
		
\medskip
	
\end{center}
	
\hrule
	
\begin{abstract}
The complement of a knot in $\mathbb{S}^3$ is a fundamental object in $3$-manifold topology and serves as a primary building block in the study of geometric structures on manifolds. For a large class of knots, these complements admit complete hyperbolic metrics, motivating the development of various frameworks for their classification and analysis. A twist knot is formed by applying a series of half-twists to a closed loop and linking the ends. In this article, we present explicit topological and combinatorial descriptions of all twist knot complements via the construction of suitable cell structures. Our method produces a closed $3$-dimensional pseudomanifold with a single vertex, arising from prescribed face pairings of a faceted $3$-ball. Removing the vertex yields the complement of the corresponding twist knot. The construction naturally induces ideal triangulations of these knot complements, several of which are shown to be facet-minimal. Beyond providing concrete models, our approach offers new perspectives on the topology of knot complements and leads to some open research problems.
\end{abstract}

\noindent {\small {\em MSC 2020\,:} Primary 57M50; Secondary 57K10,  57K30, 57K32, 57M05, 57Q15.

		
\noindent {\em Keywords:} Twist knot complement; Presentation of groups; Face-pairing; Ideal triangulation; Pachner moves.}
	
\medskip

\section{Introduction}
The mathematical study of knots is an intersection of geometric analysis, algebraic topology, and three-dimensional manifold theory. A knot is any smooth embedding of a circle $\mathbb{S}^{1}$ into the $3$-sphere $\mathbb{S}^{3}$. Two knots are equivalent if an ambient isotopy of $\mathbb{S}^3$ takes one to the other. A fundamental problem in knot theory is to classify knots by distinguishing inequivalent embeddings via suitable invariants.
The knot complement, denoted as $\mathbb{S}^3 \setminus K$, represents the manifold resulting from the removal of the knot $K$ from the $3$-sphere. According to the Gordon-Luecke theorem \cite{mca1989knots}, the complement serves as a complete invariant because two knots in $\mathbb{S}^3$ are equivalent if and only if their complements are homeomorphic.  This topological equivalence allows for the classification of knots using 3-manifold tools such as fundamental groups and hyperbolic structures, which have facilitated the exhaustive tabulation of prime knots up to 20 crossings as of June 2025 \cite{thistlethwaite25}.

A twist knot is a specific knot type formed by applying a series of half-twists to a closed loop and linking the ends. Since any number of twists can be introduced, this process generates an infinite family of knots. A twist knot with $n$ half-twists has crossing number $m = n + 2$. By convention, the knot is denoted as $m_1$ if $m$ is even or equal to 3; if $m$ is odd and greater than 4, it is denoted as $m_2$. A thorough and comprehensive study of their topological and combinatorial characterization, however, seems to be missing in literature. In this work, we aim to address this gap by developing a series of results that are relevant to both theoretical and computational aspects of mathematics.

A \emph{face-pairing} $\varphi$ on the faceted $3$-ball $B^{3}$ works by matching up the faces ($2$-cells) on its boundary $\partial(B^{3})$ in pairs so that each face is identified with another. This process creates a quotient space $M = B^{3}/\varphi$ that naturally inherits a cell complex structure. In the induced cell structure on $M$, the \emph{link} of a vertex (0-cell) is defined as the boundary of a sufficiently small closed neighborhood of that vertex; equivalently, a closed neighborhood of the vertex is homeomorphic to the cone on its link. In particular, the link of any vertex in the cell complex $M = B^{3}/\varphi$ is a closed, connected surface. While every closed $3$-manifold can be represented by a face-pairing $\varphi$ on a faceted $3$-ball $B^{3}$, the result is technically a $3$-dimensional pseudomanifold; it only becomes a closed connected manifold if the link around every vertex is a $2$-sphere. If the quotient complex $M$ is a closed 3-dimensional pseudomanifold with singular vertices, each having a link that is a closed 2-manifold distinct from $ \mathbb{S}^{2}$, then by removing these vertices we obtain a non-compact $3$-manifold.

Triangulations of $3$-manifolds provide a concrete combinatorial approach to understanding their topological and geometric structure. In particular, an \emph{ideal triangulation} of a non-compact $3$-manifold $M$ is obtained by gluing ideal tetrahedra so that the resulting space is homeomorphic to $M$. This idea was first introduced by Thurston \cite{thurston1997three}, who showed that the figure-eight knot complement can be realized by suitably gluing just two ideal tetrahedra. Later, Moise \cite{moise52} and Bing \cite{bing59} proved that the interior of any compact $3$-manifold with toroidal boundary admits an ideal triangulation.

Over time, many explicit constructions of ideal triangulations for knot complements have been developed. In \cite{dadd2016constructing}, Dadd and Duan constructed infinitely many ideal triangulations of the figure-eight knot complement, and subsequent works \cite{aribi23, ibarra25, jaco2020minimal} extended these constructions to twisted and double-twisted knot complements using Thurston’s algorithm. More recently, \cite{bst1} presented ideal triangulations of the complements of $(p,2)$ torus knots. In higher dimensions, Budney et al.\ \cite{budney12} studied triangulations of two-dimensional knots and showed that one of the Cappell-Shaneson knot complements admits a remarkably small topological triangulation consisting of only two $4$-simplices. Despite these advances, constructing minimal triangulations of $3$-manifolds remains a challenging problem.

In this article, we investigate the topological and combinatorial structure of twist knot complements through an explicit construction obtained by gluing the faces of a faceted $3$-ball. The approach presented here is completely elementary and provides a conceptually transparent alternative to existing constructions based on Thurston’s algorithm, such as those developed in \cite{aribi23}.  We consider a cellular decomposition of a faceted $3$-ball as illustrated in Figure \ref{m.cell-PTFP}. Through an appropriate identification, we obtain a pseudomanifold $M$ with a single vertex. Removing this vertex results in a non-compact 3-manifold, denoted by $M_{0}$, which corresponds topologically to a knot complement. More precisely, we construct a cellular structure for the complements of all twist knots. This construction provides a systematic and geometrically transparent method for producing triangulations of knot complements, thereby offering an alternative perspective to existing approaches derived from Thurston’s framework. Then we construct an explicit triangulation of $M$, which naturally induces a triangulation of the associated manifold $M_{0}$. Among of these triangulations, we have listed some triangulations that are shown to be facet-minimal. We believe that our approach has several notable strengths: it is straightforward to implement, offers clear geometric insight into the resulting constructions, and admits natural generalizations to broader classes of knots and links. In particular, it allows for explicit control over the face identifications and facilitates computational verification of geometricity using tools such as \texttt{Regina} \cite{burton2013regina}.

The paper is organized as follows. Section~\ref{preliminaries} recalls the necessary background material and fixes the notation used throughout the paper. In Section~\ref{construct_manifolds}, we introduce a method for constructing the $3$-sphere from faceted $3$-balls via suitable face-pairing identifications. Section~\ref{twist.knot.complement} develops the topological construction of twist knot complements, which forms the foundation of our approach. In Section \ref{fundamental_group_knot}, we determine a general form of the fundamental group for all twist knot complements. These results are brought together in Section~\ref{main_theorem_proof}, where we prove our first main theorem. In Section~\ref{ideal_triangulation}, we establish our second main result on the ideal triangulations of twist knot complements, and provide explicit triangulations for selected examples, together with their isomorphism signatures. The minimality of certain ideal triangulations is addressed in Section~\ref{minimality}. Finally, Section~\ref{conclusion} concludes the paper with a discussion of how these results relate to existing questions on face-pairing constructions and triangulations, and outlines possible directions for future research.

\section{Preliminaries}\label{preliminaries}
A topological space \(M\) equipped with cellular structure is called a \emph{closed pseudomanifold of dimension~$3$} if it is Hausdorff, second-countable, and compact, and if the boundary of a small closed neighbourhood of every vertex (0-cell) is a closed, connected surface. A simplicial version of a 3-dimensional closed pseudomanifold can be found in \cite{BS2020}.

\begin{lemma}[\cite{cannon2000introduction}]\label{pm-m}
Let $M$ be a closed pseudomanifold of dimension $3$ obtained from a face-pairing. Let~$\chi$ denote the Euler characteristic.  
Then $M$ is a manifold if and only if $\chi(M)=0$.
\end{lemma}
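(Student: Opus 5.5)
We compare the Euler characteristic of $M$ with that of a compact $3$-manifold with boundary obtained by truncating the vertices. Let $M=B^3/\varphi$ carry the induced cell structure, and let $v_1,\dots,v_k$ be its vertices, with links $L_1,\dots,L_k$. By the definition of a closed $3$-dimensional pseudomanifold, each $L_i$ is a closed connected surface. Away from the vertices, every point of $M$ has a Euclidean neighbourhood. For interior points of the ball this is clear. For a point in the interior of a face, the two paired copies of the face glue to give a ball. For a point in the interior of an edge, the edge cycle closes up: the cyclic order of faces around the edge gives a disk link. So $M$ is a manifold if and only if every $L_i$ is homeomorphic to $\mathbb{S}^2$. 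That is, the only possible singularities are at vertices.

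\emph{Step 1: truncate.} Remove a small open cone neighbourhood $N(v_i)\cong \mathrm{cone}(L_i)$ of each vertex. The result $W=M\setminus\bigcup_i N(v_i)$ is a compact $3$-manifold with boundary $\partial W=\bigsqcup_i L_i$. Since $M=W\cup\bigcup_i \overline{N(v_i)}$ and $W\cap \overline{N(v_i)}=L_i$, inclusion--exclusion for Euler characteristics (or a direct cell count after subdividing compatibly) gives
\[
\chi(M)=\chi(W)+\sum_{i=1}^k \chi(\mathrm{cone}(L_i))-\sum_{i=1}^k\chi(L_i)=\chi(W)+k-\sum_{i=1}^k\chi(L_i).
\]

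\emph{Step 2: use the boundary formula.} For a compact $3$-manifold $W$, doubling along the boundary gives a closed $3$-manifold $DW$ with $\chi(DW)=0$ by Poincaré duality. Also $\chi(DW)=2\chi(W)-\chi(\partial W)$, so $\chi(W)=\tfrac12\chi(\partial W)=\tfrac12\sum_i\chi(L_i)$. Substituting into Step 1,
\[
\chi(M)=\sum_{i=1}^k\Bigl(1-\tfrac12\chi(L_i)\Bigr).
\]

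\emph{Step 3: conclude.} Each $L_i$ is a closed connected surface, so $\chi(L_i)\le 2$, with equality if and only if $L_i\cong\mathbb{S}^2$. Hence every summand $1-\tfrac12\chi(L_i)$ is $\ge 0$, and it vanishes exactly when $L_i$ is a sphere. Therefore $\chi(M)=0$ if and only if all links are spheres, which by the preliminary remark holds if and only if $M$ is a manifold.

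The main point needing care is the claim that non-vertex points are manifold points. The face case is immediate. For edges, the link of an edge point is formed by gluing the dihedral wedges around the edge cycle, and one must check that the face-pairing maps compose around the cycle to the identity on the edge, so that no edge is folded onto itself. In Cannon's setting the face pairings are assumed to satisfy this edge-cycle condition. If it were not assumed, an edge reversed by its cycle would produce a midpoint vertex after subdivision, and the argument above would then absorb it as another vertex, which is why I would phrase the count in terms of the vertices of the induced cell structure.
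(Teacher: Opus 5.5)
Your argument is correct. The paper gives no proof of this lemma; it only quotes it from Cannon--Floyd--Parry. The classical argument behind the criterion, going back to Seifert and Threlfall, obtains your identity $\chi(M)=\sum_i\bigl(1-\tfrac12\chi(L_i)\bigr)$ by a direct cell count rather than by truncation and doubling.

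In that count, each link is cellulated by its intersections with the cells of $M$. Each end of a $1$-cell gives a link vertex, each corner of a $2$-cell gives a link edge, and each vertex of the faceted ball gives a link polygon. Write $V,E,F$ for the numbers of vertices, edges and faces of $\partial B^3$. Then $F=2f_2$, and the $2$-cells of $M$ have $E$ corners in total. Euler's formula on the sphere $\partial B^3$ then gives $\sum_i\chi(L_i)=2f_1-E+V=2f_1-2f_2+2=2(f_1-f_2+f_3)$, which rearranges to $\chi(M)=f_0-\tfrac12\sum_i\chi(L_i)$.

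That route uses nothing beyond Euler's formula for $\mathbb{S}^2$ and reads directly off the face-pairing data. Yours needs more: Poincar\'e duality (mod $2$) for the double, the collar that makes $W$ a manifold with boundary, and excision for the inclusion--exclusion step. In return, your argument is independent of any cell structure. It works for any compact space that is a $3$-manifold away from finitely many cone points, and it makes transparent why each vertex contributes $1-\tfrac12\chi(L_i)$.

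A few smaller points:
\begin{enumerate}
\item You assert without proof that a manifold point must have spherical link. You can justify this by local homology: $H_*(M,M\setminus v_i)\cong\tilde H_{*-1}(L_i)$ forces $L_i$ to be a homology $2$-sphere. Alternatively, bypass it entirely, since a closed $3$-manifold has $\chi=0$ directly by Poincar\'e duality.
\item Your reversed-edge remark is right: such a midpoint has link $\mathbb{RP}^2$ and contributes $+\tfrac12$. In the paper's orientable setting this case cannot arise, because the punctured neighbourhood $\mathbb{RP}^2\times(0,1)$ is non-orientable.
\item Your quantitative identity, and not merely the stated equivalence, is what yields the assertion preceding Theorem~\ref{Th:2.2} that the single vertex of $M(\epsilon_m^{*})$ has torus link. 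There $\chi=1$ with one vertex forces $\chi(L)=0$.
\end{enumerate}
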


\noindent To show that a closed $3$-manifold is homeomorphic to the $3$-sphere $\mathbb{S}^{3}$, it is sufficient to prove that its fundamental group is trivial. Indeed, the Poincar\'e conjecture states that any closed and simply connected $3$-manifold must be homeomorphic to $\mathbb{S}^{3}$. Since this conjecture was proved by Perelman in 2003, establishing the triviality of the fundamental group allows us to conclude that the manifold is a $3$-sphere. The following proposition states this result.

\begin{proposition}[\cite{Poincare1, Poincare2}]\label{propo:sphere}
A simply-connected closed $3$-manifold is homeomorphic to the $3$-sphere.
\end{proposition}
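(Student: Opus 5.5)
The statement is the Poincar\'e conjecture, so the plan is to follow Hamilton's Ricci flow programme as completed by Perelman, and not to look for anything combinatorial. Let $M$ be closed and simply connected. By Moise's theorem $M$ carries a smooth structure that is unique up to diffeomorphism, so we may fix a smooth Riemannian metric $g_0$ on $M$. We then run the Ricci flow
\[
\partial_t g = -2\,\mathrm{Ric}(g), \qquad g(0)=g_0 .
\]
The goal is to show that, after finitely many surgeries and in finite time, the flow makes every component disappear. Each such component is a spherical space form or an $\mathbb{S}^2\times\mathbb{S}^1$. Reversing the surgeries then writes $M$ as a connected sum of spherical space forms and copies of $\mathbb{S}^2\times\mathbb{S}^1$. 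Since $\pi_1(M)=1$, van Kampen forces every summand to be simply connected. The only simply connected spherical space form is $\mathbb{S}^3$, and $\mathbb{S}^2\times\mathbb{S}^1$ is excluded, so $M\cong\mathbb{S}^3$.

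The key steps, in order, are as follows.
\begin{enumerate}
\item Short-time existence of the flow (Hamilton, DeTurck). Then the Hamilton--Ivey pinching estimate, which says that wherever curvature blows up, the negative part of the curvature is dominated by the positive part.
\item Perelman's $\mathcal{W}$-entropy and reduced volume. Their monotonicity gives $\kappa$-noncollapsing at all scales below a fixed one.
\item The canonical neighbourhood theorem. Near a singularity, every point of high curvature lies in an $\varepsilon$-neck, an $\varepsilon$-cap, or a component of positive curvature. The proof goes by blow-up limits, which are $\kappa$-solutions, together with the classification of $\kappa$-solutions.
\item Surgery. We cut along $\delta$-necks and glue in standard caps, choosing the parameters so that the a priori estimates survive each surgery. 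We then show that surgery times cannot accumulate, so only finitely many surgeries occur in any finite time interval.
\end{enumerate}

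The remaining and hardest step is finite-time extinction under the hypothesis $\pi_1(M)=1$. I would follow Colding--Minicozzi, or Perelman's second preprint. Because $\pi_2$ or $\pi_3$ of $M$ is nontrivial, there is a nontrivial sweepout of $M$ by $2$-spheres, or by loops in Perelman's version. Define the width $W(t)$ as the min--max energy of such sweepouts. Under the flow, $W$ satisfies a differential inequality of the form
\[
\frac{d}{dt}W \le -4\pi + \tfrac{3}{4(t+C)}\,W ,
\]
in the sense of forward difference quotients, using Gauss--Bonnet and the evolution of area. Such an inequality forces $W$ to become negative in finite time, which is impossible. Hence the flow must become extinct. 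One also has to check that the width is controlled across surgeries; this is the delicate technical point.

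I expect the canonical neighbourhood and surgery control to be the main obstacle overall. Within this paper, however, the honest course is to invoke \cite{Poincare1, Poincare2} for these steps, exactly as Proposition~\ref{propo:sphere} does, rather than reproduce them.
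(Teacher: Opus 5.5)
Your outline is the Hamilton--Perelman argument (Ricci flow with surgery plus finite-time extinction), and this is exactly what the paper relies on: it gives no proof of its own and simply cites Morgan--Tian and Perelman's finite-extinction preprint, as you also do in the end. Two small precisions: smoothing a topological $3$-manifold uses Moise's triangulation theorem together with PL smoothing theory (Munkres, Whitehead), and for simply connected $M$ Hurewicz gives $\pi_2(M)=0$ and $\pi_3(M)\cong\mathbb{Z}$, so it is specifically the $\pi_3$ sweepout that drives the extinction step.
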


\subsection{Knots and their properties}
Knots provide a powerful framework for studying three-dimensional spaces, since every closed $3$-manifold can be obtained by surgery along a knot or link. We begin by recalling some basic notions from knot theory. Following \cite{schultens2014introduction}, a \emph{knot} in $\mathbb{S}^3$ is defined as a smooth isotopy class of smooth embeddings of the circle $\mathbb{S}^1$ into $\mathbb{S}^3$. Two knots $K$ and $K'$ in $\mathbb{S}^3$ are said to be \emph{equivalent} if there exists a homeomorphism $h : \mathbb{S}^3 \to \mathbb{S}^3$ such that $h(K) = K'$.  We denote such an equivalence class by $K$. A knot is called \emph{prime} if it cannot be expressed as the connected sum of two nontrivial knots; that is, it is indecomposable with respect to the connected sum operation. A \emph{twist knot} is a specific knot type formed by applying a series of half-twists to a closed loop and linking the ends. If the number of crossings $m$ is even or equal to $3$, the corresponding twist knot is denoted by $m_1$; whereas if $m$ is odd and greater than $4$, it is denoted by $m_2$. All twist knots are prime. The necessary preliminary lemmas regarding knots are as follows:

\begin{lemma}\cite{mca1989knots}\label{equivalent_knot}
    Two knots are equivalent if and only if their complements are homeomorphic.
\end{lemma}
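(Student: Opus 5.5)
The plan is to treat the two implications separately. One is elementary; the other is the substance of Gordon and Luecke's theorem~\cite{mca1989knots}, so there I would follow their strategy rather than try to find a new one.

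For the forward direction, suppose $h:\mathbb{S}^3\to\mathbb{S}^3$ is a homeomorphism with $h(K)=K'$. Its restriction $h|_{\mathbb{S}^3\setminus K}$ is a bijection onto $\mathbb{S}^3\setminus K'$. It is continuous with continuous inverse $h^{-1}|_{\mathbb{S}^3\setminus K'}$, so it is a homeomorphism of the complements.

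For the converse, the first step is to pass from open complements to compact exteriors. Let $E(K)=\mathbb{S}^3\setminus \mathring{N}(K)$, where $N(K)$ is a tubular neighbourhood. A homeomorphism $\mathbb{S}^3\setminus K\cong \mathbb{S}^3\setminus K'$ can be promoted to a homeomorphism $f:E(K)\to E(K')$. This uses uniqueness of the end structure: each complement is the interior of its exterior, and the end is a $T^2\times[0,\infty)$ collar. For the unknot the exterior is a solid torus, and a homeomorphic exterior forces $K'$ to be trivial by the loop theorem and Dehn's lemma, so assume $K,K'$ are nontrivial.

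The key observation is that if $f$ carries a meridian $\mu$ of $K$ to a curve isotopic to a meridian $\mu'$ of $K'$, then $f$ extends radially over the solid tori $N(K)\to N(K')$. This gives a homeomorphism of $\mathbb{S}^3$ sending $K$ to $K'$, which is an equivalence of knots. If instead $f(\mu)=\gamma$ is some other slope on $\partial N(K')$, then
\[
K'(\gamma)=E(K')\cup_{\gamma} (S^1\times D^2)\cong E(K)\cup_{\mu}(S^1\times D^2)=\mathbb{S}^3 .
\]
So a nontrivial Dehn surgery on the nontrivial knot $K'$ yields $\mathbb{S}^3$. The theorem therefore reduces to the statement that no nontrivial surgery on a nontrivial knot gives $\mathbb{S}^3$.

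That reduction statement is the main obstacle, and I would follow Gordon--Luecke's combinatorial method. Suppose $K'(\gamma)\cong\mathbb{S}^3$ with $\Delta(\gamma,\mu')\ge 1$. Put the core $K_\gamma$ of the surgery solid torus in thin position in this $\mathbb{S}^3$, and put $K'$ in thin position in the original $\mathbb{S}^3$. Level spheres then give planar surfaces $P\subset E(K')$ with boundary slope $\mu'$ and $Q\subset E(K')$ with boundary slope $\gamma$. Arrange them to intersect essentially, with no trivial arcs, using the thin-position arguments of Gabai.

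Next, form the labelled intersection graphs $G_P, G_Q$ on the spheres $\hat P,\hat Q$, whose fat vertices are the boundary components. Each edge endpoint meets a vertex $\Delta$ times per boundary circle of the other surface. Euler characteristic counts on these planar graphs force the existence of Scharlemann cycles and ultimately a ``great web''. A Scharlemann cycle yields a lens space or a punctured projective-space summand inside $\mathbb{S}^3$, which is impossible. Separately treating $\Delta=1$ via Gordon's earlier results, one would also need the Property~R/Property~P–type input for $\Delta\ge 2$ together with Culler–Gordon–Luecke–Shalen cyclic surgery results. The combination of these contradictions shows $\gamma=\mu'$, completing the proof.
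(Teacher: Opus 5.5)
Your reduction is correct, and it is the same one Gordon and Luecke use. The forward direction, the passage from complements to exteriors, the unknot case via Dehn's lemma, and the dichotomy between ``$f(\mu)$ is a meridian, so $f$ extends over the solid tori'' and ``$K'(\gamma)\cong\mathbb{S}^3$ with $\Delta(\gamma,\mu')\ge 1$'' are all fine. The paper proves nothing here: it quotes the lemma from Gordon--Luecke. Within the paper, then, the appropriate argument is exactly your reduction followed by a citation of their theorem that no nontrivial surgery on a nontrivial knot yields $\mathbb{S}^3$.

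The gap is in your sketch of that theorem, which is where all the content lies. An Euler characteristic count on $G_P$ and $G_Q$ without trivial loops does not by itself produce a Scharlemann cycle. For $\Delta=1$, for instance, the count only yields large families of parallel edges, and these need not contain one. Your outline never mentions the notion that makes the argument work: a graph \emph{representing all types}. The combinatorial core of Gordon--Luecke is a dichotomy: if neither graph has trivial loops, then either one graph contains a Scharlemann cycle or the other represents all types. The Euler characteristic enters in showing that failure to represent all types yields a great web. The Scharlemann cycle is then located inside the web, from an innermost $x$-cycle. So great webs produce Scharlemann cycles, not the other way round. The second alternative is excluded by a separate theorem: representing all types forces torsion in the fundamental group of one of the filled manifolds (later reproved algebraically by Parry). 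That is impossible here, since both fillings are $\mathbb{S}^3$. Without this half, your sketch has no route to a contradiction.

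Your closing case split is also backwards. Since $H_1(K'(a/b))\cong\mathbb{Z}/|a|$, an $\mathbb{S}^3$ filling has $\gamma=\pm1/n$ with $\Delta=|n|$. The cyclic surgery theorem of Culler--Gordon--Luecke--Shalen says two cyclic fillings of a non-Seifert-fibred knot exterior are at distance at most $1$, and $\mu'$ is one of them. Together with Moser's classification for torus knots, this already excludes $\Delta\ge 2$. What remains is $\gamma=\pm1$ on a knot that is not a torus knot, and that is precisely the case the new Gordon--Luecke argument is needed for; no earlier result of Gordon settles it. Property~R concerns $S^1\times S^2$ and plays no role. Property~P (Kronheimer--Mrowka, 2004) directly implies the statement you reduced to, so invoking it replaces the theorem by a strictly stronger one rather than proving it.
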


\begin{lemma}\cite{FeustelWhitten1978}\label{not composite}
If two knot complements have isomorphic fundamental groups, then both knots are either prime or composite.
\end{lemma}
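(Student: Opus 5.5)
The plan is to argue through the JSJ (characteristic submanifold) theory of the knot exteriors rather than through purely algebraic invariants of the group. Let $K$ and $K'$ be knots with exteriors $E=\mathbb{S}^3\setminus N(K)$ and $E'=\mathbb{S}^3\setminus N(K')$, and let $\pi_1(E)\cong\pi_1(E')$. I will take both knots to be nontrivial. Otherwise the group is $\mathbb{Z}$, and by the loop theorem both knots are then unknotted. Under this assumption $E$ and $E'$ are irreducible, aspherical Haken manifolds with incompressible torus boundary, so the isomorphism is induced by a homotopy equivalence $f\colon E\to E'$.

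\textbf{Step 1.} Recall the topological characterization of primeness. $K$ is composite if and only if $E$ contains an essential annulus whose boundary consists of two meridians. Equivalently, the JSJ piece of $E$ adjacent to $\partial E$ is a composing space $S^1\times P_n$. Here $P_n$ is an $n$-punctured disc with $n\ge 3$, and the $S^1$-fibre is the meridian. When $K$ is prime, that boundary piece is instead a torus knot space, a cable space, or a hyperbolic piece (Budney's classification of JSJ pieces of knot exteriors).

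\textbf{Step 2.} Apply Johannson's classification of homotopy equivalences of Haken manifolds. After a homotopy, $f$ maps the characteristic submanifold $\Sigma\subset E$ onto $\Sigma'\subset E'$ by a homotopy equivalence. It also restricts to a homeomorphism from $\overline{E\setminus\Sigma}$ onto $\overline{E'\setminus\Sigma'}$. Because the boundary of a knot exterior is a single torus and the exterior is not Seifert fibred in the composite case, I expect $f$ to be deformable into a map sending $\partial E$ to $\partial E'$. For this I would invoke Waldhausen and Johannson on the peripheral structure: the peripheral $\mathbb{Z}^2$ is the unique conjugacy class of maximal $\mathbb{Z}^2$ subgroups meeting the boundary-adjacent piece in a specified way. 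Once that is in place, the piece adjacent to $\partial E$ is carried to the piece adjacent to $\partial E'$.

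\textbf{Step 3.} Show that Seifert pieces homotopy equivalent rel boundary have the same type. A composing space has group $\mathbb{Z}\times F_{n-1}$ with $n-1\ge 2$, so its fibre is central with free quotient of rank at least $2$. A cable space has quotient $\mathbb{Z}*\mathbb{Z}_q$, which has torsion, and a torus knot space has quotient $\mathbb{Z}_p*\mathbb{Z}_q$. A hyperbolic piece has all abelian subgroups cyclic or $\mathbb{Z}^2$ and no nontrivial centre. These invariants are preserved under homotopy equivalence, so $K$ is composite exactly when $K'$ is.

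\textbf{Main obstacle.} I expect Step 2 to be the hardest part, namely ensuring that the abstract isomorphism respects the boundary. A purely group-theoretic test, such as the existence of an element whose centralizer is $\mathbb{Z}\times F_2$, fails on its own. The reason is that satellites of composite knots are prime yet contain interior composing spaces. To control the boundary I would first try to show that the boundary torus of a knot exterior is distinguished among the JSJ tori: it is the unique one that does not separate $E$ into two pieces each having boundary. This is preserved by Johannson's theorem, since $f$ is a homeomorphism on the complement of $\Sigma$ and is controlled on $\Sigma$.
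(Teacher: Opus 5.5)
The paper gives no proof of this lemma; it is simply quoted from Feustel--Whitten. Your outline therefore has to stand on its own, and it breaks at Step 2. The claim that, when $K$ is composite, $f\colon E\to E'$ can be deformed to carry $\partial E$ to $\partial E'$ is false.

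Take the granny knot $3_1\#3_1$ and the square knot $3_1\#\overline{3_1}$. Both groups are $\pi_1(3_1)*_{\langle\mu\rangle}\pi_1(3_1)$, since mirroring gives an isomorphism of trefoil groups fixing the meridian. Both exteriors are non-Seifert-fibred with a single incompressible boundary torus, which is exactly the situation in which you invoke Step 2. Suppose a homotopy equivalence between them could be deformed into a map of pairs $(E,\partial E)\to(E',\partial E')$. Then Waldhausen's theorem would make the exteriors homeomorphic, and Gordon--Luecke would make the knots equivalent. They are not, since their signatures are $\pm4$ and $0$. So any argument for Step 2 that uses only a single torus boundary, non-Seifert-fibredness and Johannson's theorem proves too much. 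Note also that if Step 2 held, Waldhausen would finish the proof at once and Step 3 would be superfluous.

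Your proposed repair fails for the same reason. $\partial E$ is not a JSJ torus, and for a composite knot it lies on the composing-space component $S^1\times P$ of $\Sigma$, where Johannson's theorem only gives a homotopy equivalence. Homotopy self-equivalences of $S^1\times P$ realize automorphisms of $\pi_1(P)=\langle a,b\rangle\cong F_2$ such as $a\mapsto a$, $b\mapsto b^{-1}$. These send the outer boundary class $ab$ to the non-peripheral class $ab^{-1}$, and this is exactly how the granny/square isomorphism acts on the longitude $\lambda_1\lambda_2$.

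The missing idea is to recognise the root piece without ever referring to $\partial E$. What Johannson's theorem does control are the JSJ tori, because they lie in the region where the deformed $f$ is a homeomorphism. Hence the JSJ trees of $E$ and $E'$ are isomorphic, with corresponding pieces homotopy equivalent.

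A composing space is recognised by its group $\mathbb{Z}\times F_k$ with $k\ge 2$; your Step 3 invariants separate it from cable spaces, torus-knot spaces and hyperbolic pieces. This group also determines its number of boundary tori, $b=k+1$. Every non-root piece has all of its boundary tori glued along JSJ tori, so its $b$ equals its degree in the tree. The root composing space of a composite knot instead has $b$ equal to its degree plus one. Since the root piece of a prime knot exterior is never a composing space, the existence of a composing-space vertex with $b=\deg+1$ is an invariant of the group that detects compositeness. The case of a torus knot, whose exterior is Seifert fibred, is handled separately by the centre of the group.

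With Step 2 replaced by this count, Steps 1 and 3 can be used essentially as you wrote them.
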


\begin{lemma}\cite{mca1989knots}\label{prime_knot}
If two prime knot complements have isomorphic fundamental groups, then they are equivalent.
\end{lemma}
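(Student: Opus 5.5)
The plan is to split the statement into two parts. First, an isomorphism of fundamental groups of prime knot complements should give a homeomorphism of the complements. Second, Lemma~\ref{equivalent_knot} (Gordon--Luecke) turns that homeomorphism into an equivalence of knots. Since equivalence in this paper allows any homeomorphism $h$ of $\mathbb{S}^3$, including orientation-reversing ones, we do not need to keep track of orientations. The whole difficulty therefore lies in the first part: the passage from group to space.

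\textbf{Setting up.} Let $K,K'$ be prime knots with exteriors $E=\mathbb{S}^3\setminus N(K)$ and $E'=\mathbb{S}^3\setminus N(K')$. These exteriors deformation-retract from the open complements, so they have the same groups. Fix an isomorphism $\pi_1(E)\cong\pi_1(E')$. By Alexander's theorem every knot exterior is irreducible.

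\textbf{The trivial case.} If $\pi_1(E)\cong\mathbb{Z}$, then Dehn's lemma and the loop theorem show that $\partial E$ compresses. This forces $K$ to be the unknot. The same argument applies to $K'$, so both knots are trivial and hence equivalent.

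\textbf{The nontrivial case.} Otherwise $\partial E$ and $\partial E'$ are incompressible tori, so $E$ and $E'$ are Haken manifolds with torus boundary. If the isomorphism carried the peripheral subgroup $\pi_1(\partial E)$ onto a conjugate of $\pi_1(\partial E')$, then Waldhausen's rigidity theorem for Haken manifolds would realize it by a homeomorphism $E\to E'$. That would finish the first part.

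\textbf{Main obstacle.} An abstract isomorphism of knot groups need not preserve peripheral structure. I would handle this with the JSJ decomposition together with Johannson's theory of characteristic submanifolds, which is the route of Feustel--Whitten and Whitten.
\begin{itemize}[label={}, leftmargin=1em]
\item The group determines the JSJ pieces and the graph of groups up to the known non-rigid ambiguities.
\item These ambiguities occur only in Seifert fibred pieces: composing spaces arising from connected sums, and cable spaces.
\item Primeness rules out the composing spaces. This is consistent with Lemma~\ref{not composite}, which says the group detects whether a knot is prime.
\end{itemize}
In the remaining cases (hyperbolic, torus, satellite-by-cabling), I would check directly that any isomorphism can be modified by an automorphism so that it becomes peripheral-preserving. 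For hyperbolic knots, Mostow rigidity gives this immediately. For torus knots, the group determines the pair $(p,q)$. For satellites, one inducts on the JSJ tree, with the peripheral torus sitting in a uniquely determined outermost piece.

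Once a peripheral-preserving isomorphism is in hand, Waldhausen's theorem gives $E\cong E'$, and Lemma~\ref{equivalent_knot} gives that $K$ and $K'$ are equivalent.
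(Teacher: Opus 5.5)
Your reduction is the right one. The paper gives no argument for this lemma and only cites Gordon--Luecke \cite{mca1989knots}, which supplies just your second step. The first step (prime knots with isomorphic groups have homeomorphic complements) is Whitten's theorem (W.~C.~Whitten, \emph{Knot complements and groups}, Topology 26, 1987), building on \cite{FeustelWhitten1978}, and citing it would finish the proof.

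As a self-contained argument for that step, however, your sketch has a gap exactly where the content lies, namely when $K$ is a cable knot. You induct on the JSJ tree, using that the peripheral torus sits in a uniquely determined outermost piece. The piece is determined, but when it is a cable space $C$ the peripheral subgroup inside it is not. Write $\pi_1(C)=\langle c,d,h\mid c^{p}h^{\beta}=1,\ h\ \text{central}\rangle$ with $\gcd(p,\beta)=1$, outer boundary subgroup $\langle d,h\rangle$ and inner (JSJ) boundary subgroup $\langle cd,h\rangle$. Then $c\mapsto c$, $h\mapsto h$, $d\mapsto c^{-1}d^{-1}c^{-1}$ is an involutive automorphism, and it maps $\langle cd,h\rangle$ onto itself (since $cd\mapsto (cd)^{-1}$). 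Yet for $p\ge 3$ the image of $d$ projects to a conjugate of $c^{-2}d^{-1}$ in the base group $\mathbb{Z}/p*\mathbb{Z}$, and this is conjugate to no power of $d$ or of $cd$. So the vertex group together with its edge group does not locate the outer peripheral subgroup. Your phrase ``check directly that any isomorphism can be modified to become peripheral-preserving'' is simply the statement that still has to be proved.

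What is missing is an argument that uses how $C$ is glued to the companion exterior $E(J)$. The isomorphism sends the centre $\langle h\rangle$ of the cable vertex group to the centre of the corresponding vertex group for $K'$. Also, $p$ is read off from the torsion in $\pi_1(C)/\langle h\rangle$. Its restriction to $\pi_1(E(J))$ respects the boundary subgroup, which is an edge group, so Waldhausen gives a homeomorphism $f\colon E(J)\to E(J')$ carrying the fibre slope $p\lambda_J+q\mu_J$ to $\pm(p\lambda_{J'}+q'\mu_{J'})$.

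To match the cabling parameters and extend $f$ across the cable spaces, you must know that $f(\mu_J)=\pm\mu_{J'}$. The longitude is forced by homology, but the meridian is a genuine Dehn-surgery fact (for instance Gordon--Luecke applied to the companion). Nothing of this kind appears in your sketch.

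A smaller inaccuracy: primeness does not exclude composing spaces altogether, since a cable of a composite knot is prime and contains one. What it excludes is a composing space containing $\partial E$. Seifert pieces deeper in the tree are harmless, because all of their boundary tori are JSJ tori.
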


\subsubsection{Algorithm for Computing the Fundamental Group of a Knot Complement}\label{Algorithm}
The fundamental group of a knot complement, also called the \emph{knot group}, 
$\pi_1(\mathbb{S}^3 \setminus K)$, can be computed using the Wirtinger presentation 
(cf. Lemma \ref{ref: wirtinger presentation}). For that, one first decomposes the knot diagram into arcs, each running from one undercrossing to the next; the number of such arcs equals the number of crossings. These arcs give generators $x_1, x_2, \dots, x_k$ of the knot group. Further, each crossing contributes a relator, yielding the same number of relators as crossings. To determine the relators $r_i$, we first fix an orientation of the knot and represent each crossing using the corresponding arcs in the knot diagram. In this representation, the overcrossing arc is drawn vertically and the undercrossing arc horizontally. A directed arrow is then assigned by following the direction of the arc, conventionally pointing from right to left. The relator $r_i$ is obtained by reading off the corresponding loop relation at the crossing.

\begin{center}

\tikzset{every picture/.style={line width=0.75pt}} 

\begin{tikzpicture}[x=0.75pt,y=0.75pt,yscale=-1,xscale=1]

\draw    (124.09,698.8) -- (124.09,768.8) -- (124.09,843.15) ;
\draw [shift={(124.09,845.15)}, rotate = 270] [color={rgb, 255:red, 0; green, 0; blue, 0 }  ][line width=0.75]    (10.93,-3.29) .. controls (6.95,-1.4) and (3.31,-0.3) .. (0,0) .. controls (3.31,0.3) and (6.95,1.4) .. (10.93,3.29)   ;
\draw    (43,769.02) -- (72.3,769.02) -- (87.93,769.02) -- (115.28,769.02) ;
\draw    (132.86,769.39) -- (160.21,769.76) -- (197.28,769.76) ;
\draw [shift={(199.28,769.76)}, rotate = 180] [color={rgb, 255:red, 0; green, 0; blue, 0 }  ][line width=0.75]    (10.93,-3.29) .. controls (6.95,-1.4) and (3.31,-0.3) .. (0,0) .. controls (3.31,0.3) and (6.95,1.4) .. (10.93,3.29)   ;
\draw [color={rgb, 255:red, 74; green, 144; blue, 226 }  ,draw opacity=1 ]   (162.07,803.02) -- (162.16,725.19) ;
\draw [shift={(162.16,723.19)}, rotate = 90.07] [color={rgb, 255:red, 74; green, 144; blue, 226 }  ,draw opacity=1 ][line width=0.75]    (10.93,-3.29) .. controls (6.95,-1.4) and (3.31,-0.3) .. (0,0) .. controls (3.31,0.3) and (6.95,1.4) .. (10.93,3.29)   ;
\draw [color={rgb, 255:red, 74; green, 144; blue, 226 }  ,draw opacity=1 ]   (87.02,799.95) -- (87.07,724.34) ;
\draw [shift={(87.07,722.34)}, rotate = 90.03] [color={rgb, 255:red, 74; green, 144; blue, 226 }  ,draw opacity=1 ][line width=0.75]    (10.93,-3.29) .. controls (6.95,-1.4) and (3.31,-0.3) .. (0,0) .. controls (3.31,0.3) and (6.95,1.4) .. (10.93,3.29)   ;
\draw [color={rgb, 255:red, 74; green, 144; blue, 226 }  ,draw opacity=1 ]   (97,720.8) -- (152,721.76) ;
\draw [shift={(154,721.8)}, rotate = 181.01] [color={rgb, 255:red, 74; green, 144; blue, 226 }  ,draw opacity=1 ][line width=0.75]    (10.93,-3.29) .. controls (6.95,-1.4) and (3.31,-0.3) .. (0,0) .. controls (3.31,0.3) and (6.95,1.4) .. (10.93,3.29)   ;
\draw [color={rgb, 255:red, 74; green, 144; blue, 226 }  ,draw opacity=1 ]   (93.79,814.02) -- (153,814.77) ;
\draw [shift={(155,814.8)}, rotate = 180.73] [color={rgb, 255:red, 74; green, 144; blue, 226 }  ,draw opacity=1 ][line width=0.75]    (10.93,-3.29) .. controls (6.95,-1.4) and (3.31,-0.3) .. (0,0) .. controls (3.31,0.3) and (6.95,1.4) .. (10.93,3.29)   ;
\draw    (246,770.02) -- (275.3,770.02) -- (290.93,770.02) -- (318.28,770.02) ;
\draw    (335.86,770.39) -- (363.21,770.76) -- (400.28,770.76) ;
\draw [shift={(402.28,770.76)}, rotate = 180] [color={rgb, 255:red, 0; green, 0; blue, 0 }  ][line width=0.75]    (10.93,-3.29) .. controls (6.95,-1.4) and (3.31,-0.3) .. (0,0) .. controls (3.31,0.3) and (6.95,1.4) .. (10.93,3.29)   ;
\draw [color={rgb, 255:red, 74; green, 144; blue, 226 }  ,draw opacity=1 ]   (366.12,800.69) -- (366,727.6) ;
\draw [shift={(366,725.6)}, rotate = 89.91] [color={rgb, 255:red, 74; green, 144; blue, 226 }  ,draw opacity=1 ][line width=0.75]    (10.93,-3.29) .. controls (6.95,-1.4) and (3.31,-0.3) .. (0,0) .. controls (3.31,0.3) and (6.95,1.4) .. (10.93,3.29)   ;
\draw [color={rgb, 255:red, 74; green, 144; blue, 226 }  ,draw opacity=1 ]   (288.98,803.18) -- (289,729.6) ;
\draw [shift={(289,727.6)}, rotate = 90.02] [color={rgb, 255:red, 74; green, 144; blue, 226 }  ,draw opacity=1 ][line width=0.75]    (10.93,-3.29) .. controls (6.95,-1.4) and (3.31,-0.3) .. (0,0) .. controls (3.31,0.3) and (6.95,1.4) .. (10.93,3.29)   ;
\draw [color={rgb, 255:red, 74; green, 144; blue, 226 }  ,draw opacity=1 ]   (356,723.6) -- (297,722.83) ;
\draw [shift={(295,722.8)}, rotate = 0.75] [color={rgb, 255:red, 74; green, 144; blue, 226 }  ,draw opacity=1 ][line width=0.75]    (10.93,-3.29) .. controls (6.95,-1.4) and (3.31,-0.3) .. (0,0) .. controls (3.31,0.3) and (6.95,1.4) .. (10.93,3.29)   ;
\draw    (326,839.8) -- (326,696.8) ;
\draw [shift={(326,694.8)}, rotate = 90] [color={rgb, 255:red, 0; green, 0; blue, 0 }  ][line width=0.75]    (10.93,-3.29) .. controls (6.95,-1.4) and (3.31,-0.3) .. (0,0) .. controls (3.31,0.3) and (6.95,1.4) .. (10.93,3.29)   ;
\draw [color={rgb, 255:red, 74; green, 144; blue, 226 }  ,draw opacity=1 ]   (362.21,812.76) -- (297,811.83) ;
\draw [shift={(295,811.8)}, rotate = 0.81] [color={rgb, 255:red, 74; green, 144; blue, 226 }  ,draw opacity=1 ][line width=0.75]    (10.93,-3.29) .. controls (6.95,-1.4) and (3.31,-0.3) .. (0,0) .. controls (3.31,0.3) and (6.95,1.4) .. (10.93,3.29)   ;

\draw (49.58,771) node [anchor=north west][inner sep=0.75pt]    {$x_{i}$};
\draw (175.42,774.23) node [anchor=north west][inner sep=0.75pt]    {$x_{i+1}$};
\draw (131.35,820) node [anchor=north west][inner sep=0.75pt]    {$x_{k}$};
\draw (252.58,771) node [anchor=north west][inner sep=0.75pt]    {$x_{i}$};
\draw (378.42,775.23) node [anchor=north west][inner sep=0.75pt]    {$x_{i+1}$};
\draw (334.35,816) node [anchor=north west][inner sep=0.75pt]    {$x_{k}$};
\draw (15,861) node [anchor=north west][inner sep=0.75pt]    {$r_{i} :\ \ \ \ \ \ \ \ \ x_{i} x_{k} =x_{k} x_{i}{}_{+}{}_{1} \ \ \ \ \ \ \ \ \ \ \ \ \ \ \ \ \ \ \  \ x_{k} x_{i} =x_{i}{}_{+}{}_{1} x_{k} \ \ \ \ \ $};

\end{tikzpicture}
{\captionsetup{hypcap=false}%
\captionof{figure}{Relator from crossing}%
\label{fig:relator-from-crossing}}

\end{center}

\begin{lemma}\cite{rolfsen2003knots} \label{ref: wirtinger presentation}
    The group $\pi_1(\mathbb{S}^{3}\setminus K)$ is generated by the $($homotopy classes of the$)$ $x_i$ and has presentation
    
    $$\pi_1(\mathbb{S}^{3} \setminus K)=\langle x_1,\dots,x_n\;:\;r_1,\dots,r_n\rangle.$$

   \noindent Moreover, any one of the $r_i$ may be omitted and the above remains true.
\end{lemma}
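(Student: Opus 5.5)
The plan is to prove the Wirtinger presentation by the standard van Kampen argument, applied to a concrete decomposition of the knot complement. Put the knot diagram in a plane slightly below the plane $z=0$ in $\mathbb{R}^3\subset\mathbb{S}^3$. Arrange the knot so that it lies in $z=0$ except near each crossing, where the under-strand dips down to $z=-1$. Orient $K$, and take the base point $*$ high above, at $z=+\infty$ (or at $\infty\in\mathbb{S}^3$). For each arc $a_i$, let $x_i$ be the loop that comes down from $*$, goes once around $a_i$ in the direction fixed by the right-hand rule, and returns to $*$. Because $\mathbb{S}^3\setminus K$ and $\mathbb{R}^3\setminus K$ differ by a point removed from a $3$-manifold, they have the same $\pi_1$, so it suffices to work in $\mathbb{R}^3$.

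Decompose $X=\mathbb{R}^3\setminus K$ as $X=U\cup V$, where $U=\{z>-1/2\}\setminus K$ is the upper half-space minus the arcs and $V=\{z<0\}\setminus K$ is the lower half-space minus the under-crossing dips. The set $U$ deformation retracts onto a closed upper half-space with $k$ disjoint properly embedded unknotted arcs removed, by straightening the arcs up into it. Hence $U$ is homotopy equivalent to a wedge of $k$ circles, and $\pi_1(U)$ is free on $x_1,\dots,x_k$; I would make this precise by an explicit deformation retraction, or by induction on the number of arcs using van Kampen for a half-ball minus one trivial arc.

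The set $V$ is a lower half-space with $k$ small unknotted arcs (the dips) removed. Each dip is boundary-parallel, so removing it just adds a free generator $y_j$, a small loop around the dip. Thus $\pi_1(V)$ is free on $y_1,\dots,y_k$. I would rather apply van Kampen one crossing at a time: glue to $U$ a small ball $B_j$ minus the under-strand near crossing $j$. The intersection $B_j\cap U$ is a region whose fundamental group is generated by loops around the three arc ends meeting there. Reading off how the loop around the dip is expressed gives exactly the relation $r_j$ of Figure \ref{fig:relator-from-crossing}, namely $x_ix_k=x_kx_{i+1}$ or $x_kx_i=x_{i+1}x_k$ according to the sign of the crossing. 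The remaining lower region $\{z<-1\}$ is a contractible half-space, and adding it introduces no new relations. Van Kampen then gives the presentation $\langle x_1,\dots,x_k : r_1,\dots,r_k\rangle$.

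For the redundancy statement, consider a large disc $D$ in a horizontal plane below everything, whose boundary circle, pushed up around the whole diagram, is null-homotopic in $X$. Writing $\partial D$ as a product of conjugates of the relator loops shows that $\prod_j w_j r_j^{\pm1} w_j^{-1}=1$ already holds in the free group. So any single $r_j$ is a consequence of the others, and it may be omitted. The main obstacle is the careful local computation at a crossing: choosing the paths from $*$ and the orientation conventions so that the relation reads exactly as drawn in Figure \ref{fig:relator-from-crossing}. Since the result is classical (see \cite{rolfsen2003knots}), I would cite it for these bookkeeping details.
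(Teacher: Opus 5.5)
The paper does not prove this lemma but quotes it from Rolfsen, and your outline is the standard van Kampen proof given there: the upper region $U$ has fundamental group free on the meridians $x_i$, each underpass box contributes exactly the relation $r_j$, the lower region contributes nothing, and a large loop encircling all the boxes shows that any single $r_j$ is redundant. Two points need correcting when you write it out. First, the lower piece must be the whole region below the slab outside the boxes (not just $\{z<-1\}$), and it adds no relations because its intersection with $U\cup\bigcup_j B_j$ is connected and simply connected, not merely because the piece is contractible. Second, the big loop must be null-homotopic in $U$ (lift it up over the diagram), not just in $X$ via a disc below everything, for $\prod_j w_j r_j^{\pm1}w_j^{-1}=1$ to hold in the free group.
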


\subsection{Ideal Triangulation}
Let $K$ be a finite collection of pairwise disjoint Euclidean $3$-simplices, each equipped with its standard simplicial structure. The $2$-simplices of $K$ are referred to as \emph{faces}. Let $\Phi$ be a collection of affine isomorphisms that pair the faces of $K$, subject to the conditions that $\psi \in \Phi$ if and only if $\psi^{-1} \in \Phi$, and that each face of $K$ appears as the domain of exactly one element of $\Phi$. The maps in $\Phi$ are called \emph{face pairings}. Form the quotient space $\widehat{M}$ by identifying faces according to $\Phi$, and let $p \colon K \to \widehat{M}$ denote the associated quotient map. The set of non-manifold points of $\widehat{M}$ is contained in its $0$-skeleton; in this situation, $\widehat{M}$ is referred to as a closed $3$-dimensional pseudomanifold. The triple $\tau=(K,\Phi,p)$ is then called a triangulation of $\widehat{M}$. Let $M_0$ denote the non-compact $3$-manifold obtained from $\widehat{M}$ by deleting all vertices. In this setting, the triangulation $\tau=(K,\Phi,p)$ is called an \emph{ideal} (or \emph{singular}) triangulation of $M_0$.

\subsection{One-Vertex Triangulations and Pachner Moves}\label{pachner_moves}
An important class of triangulations is given by \emph{one-vertex triangulations}, in which all vertices of the tetrahedra are identified to a single point. Such triangulations play a central role in computational topology, as they often provide particularly efficient combinatorial representations of $3$-manifolds. In computational settings, it is common to allow triangulations in which tetrahedra may be geometrically ``bent'' or ``twisted'', so that distinct edges or vertices of the same tetrahedron may be identified. Both theoretical and computational aspects of these triangulations are treated in \cite{jaco20030, matveev2007algorithmic}.

To modify and simplify triangulations without altering the underlying topology, one employs a collection of local transformations known as \emph{Pachner} or \emph{bistellar} moves. There are four elementary Pachner moves, namely the $2$-$3$ and $1$-$4$ moves, along with their reverse. These moves generate an equivalence relation on triangulations and are fundamental both theoretically and computationally. A classical result of Pachner~\cite{pachner1991pl} states that any two triangulations of the same closed $3$-manifold are related by a finite sequence of such moves. Refinements due to Matveev~\cite{matveev2007algorithmic} and Piergallini~\cite{piergallini1988standard} show that any two one-vertex triangulations of a closed $3$-manifold with at least two tetrahedra can be related using only $2$-$3$ and $3$-$2$ moves. The restriction on the number of tetrahedra is essential, since these moves cannot be applied to a single-tetrahedron triangulation.

\section{Faceted \texorpdfstring{\(3\)}{3}-Balls as Building Blocks for the 
\texorpdfstring{\(3\)}{3}-Sphere}\label{construct_manifolds}

We start with a faceted \(3\)-ball given by a tetrahedron with vertices \(A, B, C,\) and \(D\), as shown in Figure \ref{0.vertex-add}. Its boundary consists of four triangular faces. We specify an orientation-preserving face-pairing by identifying these faces in pairs: the face \(CDA\) is paired with the face \(CDB\) by reflection along their common edge \(CD\), and the face \(CAB\) is paired with the face \(DAB\) by reflection along their common edge \(AB\). These define the face-pairing structure of the faceted \(3\)-ball.

To refine this structure, we now subdivide certain boundary edges in a way that is compatible with the chosen face-pairings. Suppose we add a vertex to the edge $CA$. Compatibility with the first face-pairing then requires the addition of a corresponding vertex on the edge \(CB\). Likewise, the second face-pairing forces us to add a vertex on each of the edges \(DA\) and \(DB\), while the edges \(AB\) and \(CD\) remain unchanged. In other words, whenever we insert some vertices on \(CA\), we must insert the same number of vertices on each of the edges \(CB\), \(DA\), and \(DB\).

With this in mind, we subdivide the edges \(CA\), \(CB\), \(DA\), and \(DB\) into \(m+1\) subedges by inserting \(m\) new vertices on each. More explicitly, the edge \(CA\) is subdivided by the vertices $a_m, a_{m-1}, \ldots, a_2, a_1,$ and the edge \(DA\) by $b_m, b_{m-1}, \ldots, b_2, b_1.$ Similarly, the edges \(CB\) and \(DB\) are subdivided by the vertices $c_1, c_2, \ldots, c_m, \; \text{and} \; d_1, d_2, \ldots, d_m,$ respectively; see Figure~\ref{m.vertex-add}. This procedure produces a refined faceted \(3\)-ball, which we will use as a basic building block in our construction of the \(3\)-sphere.

\begin{figure}[!ht]
    \centering
    
\begin{subfigure}[t]{0.45\textwidth}
\centering

\tikzset{every picture/.style={line width=0.75pt}} 

\begin{tikzpicture}[x=0.71pt,y=0.71pt,yscale=-1,xscale=1]

\draw   (355.33,2080.41) -- (463.47,2027.9) -- (464.14,2131.53) -- cycle ;
\draw   (568.68,2079.1) -- (464.53,2131.26) -- (463.88,2028.26) -- cycle ;
\draw    (568.68,2079.1) .. controls (580.35,2245.71) and (356,2246.6) .. (355.33,2080.41) ;

\draw (331,2067) node [anchor=north west][inner sep=0.75pt]    {$A$};
\draw (456,2000) node [anchor=north west][inner sep=0.75pt]    {$C$};
\draw (455,2136) node [anchor=north west][inner sep=0.75pt]    {$D$};
\draw (577,2066) node [anchor=north west][inner sep=0.75pt]    {$B$};

        \end{tikzpicture}
        \caption{Faceted $3$-ball : Tetrahedron}
        \label{0.vertex-add}
    \end{subfigure}%
    \hfill
    \begin{subfigure}[t]{0.45\textwidth}
        \centering

\tikzset{every picture/.style={line width=0.75pt}} 

\begin{tikzpicture}[x=0.75pt,y=0.75pt,yscale=-1,xscale=1]

\draw   (91.25,3178.52) -- (198.96,3129.24) -- (199.63,3226.47) -- cycle ;
\draw   (303.76,3177.28) -- (200.02,3226.22) -- (199.37,3129.58) -- cycle ;
\draw    (303.77,3177.28) .. controls (315.39,3333.61) and (91.91,3334.45) .. (91.24,3178.51) ;
\draw   (214.7,3137.26) .. controls (214.7,3136.59) and (215.28,3136.04) .. (215.99,3136.04) .. controls (216.71,3136.04) and (217.29,3136.59) .. (217.29,3137.26) .. controls (217.29,3137.94) and (216.71,3138.48) .. (215.99,3138.48) .. controls (215.28,3138.48) and (214.7,3137.94) .. (214.7,3137.26) -- cycle ;
\draw   (110.1,3169.26) .. controls (110.1,3168.59) and (110.68,3168.04) .. (111.4,3168.04) .. controls (112.11,3168.04) and (112.69,3168.59) .. (112.69,3169.26) .. controls (112.69,3169.94) and (112.11,3170.48) .. (111.4,3170.48) .. controls (110.68,3170.48) and (110.1,3169.94) .. (110.1,3169.26) -- cycle ;
\draw   (110.1,3187.99) .. controls (110.1,3187.32) and (110.68,3186.77) .. (111.4,3186.77) .. controls (112.11,3186.77) and (112.69,3187.32) .. (112.69,3187.99) .. controls (112.69,3188.67) and (112.11,3189.21) .. (111.4,3189.21) .. controls (110.68,3189.21) and (110.1,3188.67) .. (110.1,3187.99) -- cycle ;
\draw   (212.69,3220.05) .. controls (212.69,3219.38) and (213.27,3218.83) .. (213.99,3218.83) .. controls (214.7,3218.83) and (215.28,3219.38) .. (215.28,3220.05) .. controls (215.28,3220.73) and (214.7,3221.27) .. (213.99,3221.27) .. controls (213.27,3221.27) and (212.69,3220.73) .. (212.69,3220.05) -- cycle ;
\draw   (128.1,3161.26) .. controls (128.1,3160.59) and (128.68,3160.04) .. (129.4,3160.04) .. controls (130.11,3160.04) and (130.69,3160.59) .. (130.69,3161.26) .. controls (130.69,3161.94) and (130.11,3162.48) .. (129.4,3162.48) .. controls (128.68,3162.48) and (128.1,3161.94) .. (128.1,3161.26) -- cycle ;
\draw   (184.1,3135.26) .. controls (184.1,3134.59) and (184.68,3134.04) .. (185.4,3134.04) .. controls (186.11,3134.04) and (186.69,3134.59) .. (186.69,3135.26) .. controls (186.69,3135.94) and (186.11,3136.48) .. (185.4,3136.48) .. controls (184.68,3136.48) and (184.1,3135.94) .. (184.1,3135.26) -- cycle ;
\draw   (145.1,3153.26) .. controls (145.1,3152.59) and (145.68,3152.04) .. (146.4,3152.04) .. controls (147.11,3152.04) and (147.69,3152.59) .. (147.69,3153.26) .. controls (147.69,3153.94) and (147.11,3154.48) .. (146.4,3154.48) .. controls (145.68,3154.48) and (145.1,3153.94) .. (145.1,3153.26) -- cycle ;
\draw   (127.1,3194.99) .. controls (127.1,3194.32) and (127.68,3193.77) .. (128.4,3193.77) .. controls (129.11,3193.77) and (129.69,3194.32) .. (129.69,3194.99) .. controls (129.69,3195.67) and (129.11,3196.21) .. (128.4,3196.21) .. controls (127.68,3196.21) and (127.1,3195.67) .. (127.1,3194.99) -- cycle ;
\draw   (146.1,3202.99) .. controls (146.1,3202.32) and (146.68,3201.77) .. (147.4,3201.77) .. controls (148.11,3201.77) and (148.69,3202.32) .. (148.69,3202.99) .. controls (148.69,3203.67) and (148.11,3204.21) .. (147.4,3204.21) .. controls (146.68,3204.21) and (146.1,3203.67) .. (146.1,3202.99) -- cycle ;
\draw   (179.1,3217.99) .. controls (179.1,3217.32) and (179.68,3216.77) .. (180.4,3216.77) .. controls (181.11,3216.77) and (181.69,3217.32) .. (181.69,3217.99) .. controls (181.69,3218.67) and (181.11,3219.21) .. (180.4,3219.21) .. controls (179.68,3219.21) and (179.1,3218.67) .. (179.1,3217.99) -- cycle ;
\draw   (229.7,3144.26) .. controls (229.7,3143.59) and (230.28,3143.04) .. (230.99,3143.04) .. controls (231.71,3143.04) and (232.29,3143.59) .. (232.29,3144.26) .. controls (232.29,3144.94) and (231.71,3145.48) .. (230.99,3145.48) .. controls (230.28,3145.48) and (229.7,3144.94) .. (229.7,3144.26) -- cycle ;
\draw   (245.7,3151.26) .. controls (245.7,3150.59) and (246.28,3150.04) .. (246.99,3150.04) .. controls (247.71,3150.04) and (248.29,3150.59) .. (248.29,3151.26) .. controls (248.29,3151.94) and (247.71,3152.48) .. (246.99,3152.48) .. controls (246.28,3152.48) and (245.7,3151.94) .. (245.7,3151.26) -- cycle ;
\draw   (278.7,3166.26) .. controls (278.7,3165.59) and (279.28,3165.04) .. (279.99,3165.04) .. controls (280.71,3165.04) and (281.29,3165.59) .. (281.29,3166.26) .. controls (281.29,3166.94) and (280.71,3167.48) .. (279.99,3167.48) .. controls (279.28,3167.48) and (278.7,3166.94) .. (278.7,3166.26) -- cycle ;
\draw   (229.69,3212.05) .. controls (229.69,3211.38) and (230.27,3210.83) .. (230.99,3210.83) .. controls (231.7,3210.83) and (232.28,3211.38) .. (232.28,3212.05) .. controls (232.28,3212.73) and (231.7,3213.27) .. (230.99,3213.27) .. controls (230.27,3213.27) and (229.69,3212.73) .. (229.69,3212.05) -- cycle ;
\draw   (246.69,3203.05) .. controls (246.69,3202.38) and (247.27,3201.83) .. (247.99,3201.83) .. controls (248.7,3201.83) and (249.28,3202.38) .. (249.28,3203.05) .. controls (249.28,3203.73) and (248.7,3204.27) .. (247.99,3204.27) .. controls (247.27,3204.27) and (246.69,3203.73) .. (246.69,3203.05) -- cycle ;
\draw   (277.69,3189.05) .. controls (277.69,3188.38) and (278.27,3187.83) .. (278.99,3187.83) .. controls (279.7,3187.83) and (280.28,3188.38) .. (280.28,3189.05) .. controls (280.28,3189.73) and (279.7,3190.27) .. (278.99,3190.27) .. controls (278.27,3190.27) and (277.69,3189.73) .. (277.69,3189.05) -- cycle ;

\draw (66.97,3165.25) node [anchor=north west][inner sep=0.75pt]    {$A$};
\draw (191.49,3102.38) node [anchor=north west][inner sep=0.75pt]    {$C$};
\draw (190.49,3234) node [anchor=north west][inner sep=0.75pt]    {$D$};
\draw (312.03,3164.31) node [anchor=north west][inner sep=0.75pt]    {$B$};
\draw (99.23,3151) node [anchor=north west][inner sep=0.75pt]  [font=\footnotesize,rotate=-4.48,xslant=-0.12]  {$a_{1}$};
\draw (100.71,3193) node [anchor=north west][inner sep=0.75pt]  [font=\footnotesize]  {$b_{1}$};
\draw (210.27,3117) node [anchor=north west][inner sep=0.75pt]  [font=\footnotesize]  {$c_{1}$};
\draw (206.65,3226) node [anchor=north west][inner sep=0.75pt]  [font=\footnotesize]  {$d_{1}$};
\draw (116.23,3141) node [anchor=north west][inner sep=0.75pt]  [font=\footnotesize,rotate=-4.48,xslant=-0.12]  {$a_{2}$};
\draw (133.23,3132) node [anchor=north west][inner sep=0.75pt]  [font=\footnotesize,rotate=-4.48,xslant=-0.12]  {$a_{3}$};
\draw (170.23,3117) node [anchor=north west][inner sep=0.75pt]  [font=\footnotesize,rotate=-4.48,xslant=-0.12]  {$a_{m}$};
\draw (116.71,3201) node [anchor=north west][inner sep=0.75pt]  [font=\footnotesize]  {$b_{2}$};
\draw (134.71,3209) node [anchor=north west][inner sep=0.75pt]  [font=\footnotesize]  {$b_{3}$};
\draw (169,3222) node [anchor=north west][inner sep=0.75pt]  [font=\footnotesize]  {$b_{m}$};
\draw (226.27,3125) node [anchor=north west][inner sep=0.75pt]  [font=\footnotesize]  {$c_{2}$};
\draw (244,3132) node [anchor=north west][inner sep=0.75pt]  [font=\footnotesize]  {$c_{3}$};
\draw (277,3148) node [anchor=north west][inner sep=0.75pt]  [font=\footnotesize]  {$c_{m}$};
\draw (224.65,3218) node [anchor=north west][inner sep=0.75pt]  [font=\footnotesize]  {$d_{2}$};
\draw (243.65,3209) node [anchor=north west][inner sep=0.75pt]  [font=\footnotesize]  {$d_{3}$};
\draw (278,3195) node [anchor=north west][inner sep=0.75pt]  [font=\footnotesize]  {$d_{m}$};
\draw (148,3216) node [anchor=north west][inner sep=0.75pt]   [align=left] {.};
\draw (154,3219) node [anchor=north west][inner sep=0.75pt]   [align=left] {.};
\draw (161,3222) node [anchor=north west][inner sep=0.75pt]   [align=left] {.};
\draw (256,3207) node [anchor=north west][inner sep=0.75pt]   [align=left] {.};
\draw (261,3204) node [anchor=north west][inner sep=0.75pt]   [align=left] {.};
\draw (266,3201) node [anchor=north west][inner sep=0.75pt]   [align=left] {.};
\draw (257,3139) node [anchor=north west][inner sep=0.75pt]   [align=left] {.};
\draw (263,3143) node [anchor=north west][inner sep=0.75pt]   [align=left] {.};
\draw (270,3147) node [anchor=north west][inner sep=0.75pt]   [align=left] {.};
\draw (162,3124) node [anchor=north west][inner sep=0.75pt]   [align=left] {.};
\draw (155,3127) node [anchor=north west][inner sep=0.75pt]   [align=left] {.};
\draw (148,3131) node [anchor=north west][inner sep=0.75pt]   [align=left] {.};

\end{tikzpicture}

        \caption{Inserting $m$ vertices in tetrahedron}
        \label{m.vertex-add}
    \end{subfigure}

    \caption{Cellulation for adding vertices to a tetrahedron}
    \label{add_vertex_tetrahedra}
\end{figure}

We now identify the subdivided faces via vertex homeomorphisms. More precisely,
the subdivided face \(CDA\) is identified with the subdivided face \(CDB\) by
reflection along their common edge \(CD\), inducing a vertex homeomorphism
\(\epsilon_{1,m}\). Similarly, the subdivided face \(CAB\) is identified with
the subdivided face \(DAB\) by a twist along one subedge, giving rise to a
second vertex homeomorphism \(\epsilon_{2,m}\). For clarity, we denote these
identifications as follows:
\[
\epsilon_{1,m} :
\begin{pmatrix}
C & D & b_m & \cdots & b_1 & A & a_1 & a_2 & \cdots & a_m \\
C & D & d_1 & \cdots & d_m & B & c_m & c_{m-1} & \cdots & c_1
\end{pmatrix}
\]

\[
\epsilon_{2,m} :
\begin{pmatrix}
C & a_m & \cdots & a_1 & A & B & c_m & \cdots & c_2 & c_1 \\
b_m & b_{m-1} & \cdots & A & B & d_m & d_{m-1} & \cdots & d_1 & D
\end{pmatrix}.
\]

The maps \(\epsilon_{1,m}\) and \(\epsilon_{2,m}\) are called
\emph{face-identification maps}, and we write the collection of all such identification maps
\(\epsilon_m = \{\epsilon_{1,m}^{\pm 1}, \epsilon_{2,m}^{\pm 1}\}\).

\smallskip

By performing the face identifications prescribed by the maps \(\epsilon_m\) on the cell complex depicted in Figure~\ref{m.vertex-add}, we obtain a family of closed \(3\)-dimensional pseudomanifolds, denoted by \(M(\epsilon_m)\). We will later show that each member of this family is, in fact, a \(3\)-manifold (cf. Lemma \ref{$3$-manifold}). Before establishing this result, we first prove the following lemma, which provides a key ingredient for the arguments that follow. Throughout, we fix the following notation: $m$ denotes the number of inserted vertices, with $m \in \mathbb{Z}_{\geq0}$; $\epsilon_m$ denotes the orientation-reversing face-identification maps; and $M(\epsilon_m)$ denotes the quotient space obtained under these identifications.


\begin{lemma}\label{lemma:two_edge}
If $\epsilon_m$ is an orientation-reversing face-identification map on the modified tetrahedron as defined above, then for every non-negative integer $m$, the quotient space $M(\epsilon_m)$ has exactly one vertex and two edges.
\end{lemma}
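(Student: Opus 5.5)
The proof is a direct orbit computation. The cells of $M(\epsilon_m)$ are the equivalence classes of boundary cells of the subdivided tetrahedron under the relation generated by $\epsilon_{1,m}$ and $\epsilon_{2,m}$. I will first list what each map does on vertices and on subedges, writing indices in closed form, and then show that these rules connect all vertices into one class and all edges into exactly two classes.

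\emph{Vertices.} From the tables, $\epsilon_{1,m}$ fixes $C$ and $D$, sends $A\mapsto B$, and sends
\[
b_j\mapsto d_{m+1-j},\qquad a_j\mapsto c_{m+1-j}\qquad (1\le j\le m).
\]
The map $\epsilon_{2,m}$ is a shift by one position:
\[
C\mapsto b_m,\quad a_{j+1}\mapsto b_j,\quad a_1\mapsto A,\quad A\mapsto B,\quad B\mapsto d_m,\quad c_{j+1}\mapsto d_j,\quad c_1\mapsto D .
\]
Start from $C$ and alternate the two maps:
\[
C\sim b_m\sim d_1\sim c_2\sim a_{m-1}\sim b_{m-2}\sim\cdots .
\]
It is cleaner to argue that $A\sim B$ directly and that $B\sim d_m\sim b_1\sim a_2\sim c_{m-1}\sim\cdots$. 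Then induct on $j$ to show that every $a_j$, $b_j$, $c_j$, $d_j$ lies in the class of $A$. Finally $c_1\sim D$ and $C\sim b_m$ pull $C$ and $D$ into the same class. For $m=0$ the maps reduce to $C\mapsto A\mapsto B\mapsto D$, so the claim is immediate. Hence $M(\epsilon_m)$ has exactly one vertex.

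\emph{Edges.} The boundary has $4(m+1)+2$ edges: the $m+1$ subedges on each of $CA$, $CB$, $DA$, $DB$, together with $AB$ and $CD$.
\begin{itemize}
\item The edge $CD$ lies only in the faces $CDA$ and $CDB$. There $\epsilon_{1,m}$ fixes it and $\epsilon_{2,m}$ does not involve it, so $\{CD\}$ is one full edge class.
\item For the remaining edges I read off the induced maps. Under $\epsilon_{2,m}$:
\[
Ca_m\mapsto b_mb_{m-1},\quad a_{j+1}a_j\mapsto b_jb_{j-1},\quad a_1A\mapsto AB,\quad AB\mapsto Bd_m,\quad c_{j+1}c_j\mapsto d_jd_{j-1},\quad c_1C\mapsto Db_m .
\]
Under $\epsilon_{1,m}$, a subedge of $CA$ goes to the mirror subedge of $CB$, and a subedge of $DA$ goes to the mirror subedge of $DB$.
\item Chaining these, exactly as in the vertex case, should produce a single orbit of length $4(m+1)+1$ containing $AB$ and every subedge.
\end{itemize}
So there are exactly two edge classes. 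As a consistency check, the cell structure then has one vertex, two edges, two faces and one $3$-cell. This gives $\chi=1-2+2-1=0$, in agreement with Lemma \ref{pm-m}.

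The main obstacle is the index bookkeeping. I must verify that the alternating chain of $\epsilon_{1,m}$ and $\epsilon_{2,m}$ closes up only after visiting all $4(m+1)+1$ edges, rather than splitting into several shorter cycles for some values of $m$. I would handle this by tracking the position of a subedge along each subdivided edge and writing $\epsilon_{1,m}\circ\epsilon_{2,m}$ as a single shift by one on this position. Iterating that shift then visits every position, and I would check the boundary cases ($j=1$, $j=m$, and $m=0$) separately.
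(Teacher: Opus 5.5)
Your readings of $\epsilon_{1,m}$ and $\epsilon_{2,m}$ on vertices and subedges are correct, and you correctly isolate $\{CD\}$. The target you state, a single orbit made of the remaining $4(m+1)+1$ edges, is also true. There is a genuine gap, though. That orbit claim is the whole content of the lemma, and you only say it ``should'' hold. The mechanism you propose for proving it also does not match what the edge cycle does.

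The composite $\epsilon_{1,m}\circ\epsilon_{2,m}$, in either order, sends subedges of $CA$ to subedges of $DB$ (apart from one end subedge). So it is not a self-map of the subedges of any one side and cannot simply be iterated. The edge-cycle walk instead runs through $\epsilon_{1,m},\epsilon_{2,m},\epsilon_{1,m}^{-1},\epsilon_{2,m}^{-1}$ in turn. One full round returns to the same side \emph{two} subedges further on, for example
\[
a_1a_2\xrightarrow{\epsilon_{1,m}}c_mc_{m-1}\xrightarrow{\epsilon_{2,m}}d_{m-1}d_{m-2}\xrightarrow{\epsilon_{1,m}^{-1}}b_2b_3\xrightarrow{\epsilon_{2,m}^{-1}}a_3a_4 .
\]
This is the paper's $(+2)$ rule. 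A sweep from $AB$ towards $C$ and $D$ does pass every position along the sides. But on each of $CA$, $CB$, $DA$, $DB$ it meets only the subedges of one index parity, which is exactly half of the $4(m+1)$ subedges. So ``visits every position'' does not give ``visits every subedge''. The splitting you worry about is precisely whether the two complementary half-sweeps lie in one cycle or in two.

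What joins them is the behaviour at the two ends. You defer these as boundary cases, but they are the heart of the proof.
\begin{itemize}
\item At the $C/D$ end, the $\epsilon_{2,m}$-pairing $Cc_1\leftrightarrow Db_m$ turns the walk onto the complementary parity class.
\item At the $A/B$ end, the chain $Aa_1\leftrightarrow AB\leftrightarrow Bd_m$ closes the cycle.
\end{itemize}
Which of $Cc_1$ and $b_mD$ the first sweep reaches depends on the parity of $m$. That is why the paper treats the two parities separately.
\begin{itemize}
\item For $m=2t+1$: two runs of $t+1$ rounds each, joined by $c_1C\to Db_m\to Dd_1$ and closed by $a_1A\to AB$.
\item For $m=2t$: $t$ rounds, then the turnaround $b_{m-1}b_m\to a_mC\to c_1C\to Db_m\to Dd_1\to c_1c_2$, then $t$ rounds back to $d_mB\to AB$.
\end{itemize}
To complete your proof you must carry out this trace and check that the return sweep covers exactly the complementary subedges.

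The same parity effect shows up, harmlessly, in your vertex argument. The chain $B\sim d_m\sim b_1\sim a_2\sim c_{m-1}\sim\cdots$ reaches only every other index. You also need the chain $A\sim a_1\sim c_m\sim d_{m-1}\sim b_2\sim\cdots$, so the induction should go $j\to j+2$ with two base cases.
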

\begin{proof}
Let $m \in \mathbb{Z}_{\geq0}$ denote the number of vertices inserted along each of the edges $AC$, $AD$, $CB$, and $DB$, we call them subdivided edges, and let $\epsilon_m$ be the associated collection of face-identification maps defined in the previous paragraph. To prove the lemma, we analyze the resulting quotient space $M(\epsilon_m)$ by distinguishing two cases according to whether $m$ is even or odd. We first consider the case where $m$ is odd, and subsequently treat the case where $m$ is even.

\begin{figure}[ht]
\centering
\tikzset{every picture/.style={line width=0.75pt}} 


\vspace{-1cm}
\captionof{figure}{For $m=3$ second edge cycle}%
\label{fig:edgecycle}
\end{figure}

\begin{description}
\item[\underline{Case~1} (Adding an odd number of vertices i.e. $m$ is odd).]
Suppose that an odd number of vertices is inserted along each of the subdivided edges (see in Figure \ref{m.vertex-add}). Let $m=2t+1$ for $t\in \mathbb{N}\cup \{0\}$. In this situation, each subdivided edge consists of an even number of subedges. Using the face-identification maps $\epsilon_m$, we examine the edge cycles in the quotient space $M(\epsilon_m)$. Observe first that the edge $CD$ is identified with itself under the map $\epsilon_{1,m}$, yielding one edge of $M(\epsilon_m)$. To determine another edge, we begin with the edge $AB$. Under the identification $\epsilon_{2,m}$, the edge $AB$ maps to the final subedge of $DB$, denoted by $B d_m$. Applying $\epsilon_{1,m}^{-1}$, the subedge $B d_m$ is then identified with the first subedge of $AD$, namely $A b_1$. Next, via $\epsilon_{2,m}^{-1}$, this subedge maps to the second subedge of $AC$, given by $a_1 a_2$. Finally, under $\epsilon_{1,m}$, the subedge $a_1 a_2$ maps to the last second subedge of $CB$, namely $c_m c_{m-1}$. This completes the first step of the edge traversal induced by the composition of maps \(\epsilon_2 \epsilon_1^{-1} \epsilon_2^{-1} \epsilon_1\).

In the second step, the subedge $c_m c_{m-1}$ of $CB$ maps to the third-to-last subedge of $DB$, namely $d_{m-1} d_{m-2}$, via $\epsilon_{2,m}$. Applying $\epsilon_{1,m}^{-1}$, this subedge is identified with the third subedge of $AD$, denoted by $b_2 b_3$. Subsequently, under $\epsilon_{2,m}^{-1}$, it maps to the fourth subedge of $AC$, namely $a_3 a_4$, and then to the fourth-to-last subedge of $CB$, given by $c_{m-2} c_{m-3}$, via $\epsilon_{1,m}$. This completes the second step, in which the face-pairing maps also follow the composition \(\epsilon_2 \epsilon_1^{-1} \epsilon_2^{-1} \epsilon_1\).

Proceeding inductively in this manner, the traversal continues along the subdivided edges according to the same sequence of face identifications. It is important to observe the following $(+2)$ subedge rule: along any fixed subdivided edge, if a subedge (for example, the first) enters an edge cycle at a given step of the traversal, then after one complete step the adjacent subedge does not enter the cycle; rather, the subedge two positions ahead along the same subdivided edge (for example, the third) enters the edge cycle at the next step. As a result, exactly half of the subedges of the subdivided edges form a closed loop via the composition of maps $(\epsilon_2 \epsilon_1^{-1} \epsilon_2^{-1} \epsilon_1)^{t+1}$, eventually returning to the first subedge $c_1C$ of $CB$. This completes the first closed loop of the second edge cycle.

From $c_1C$, the traversal proceeds to the final subedge of $AD$, namely $D b_m$, via $\epsilon_{2,m}$. Then the edge \(Db_m\) is mapped to the first subedge of \(DB\), namely \(Dd_1\), via the map \(\epsilon_{1,m}\), that initiates the second closed loop of the edge cycle, which again follows the same $(+2)$ subedge rule: after one subedge enters the cycle, every second subedge thereafter is included. Continuing in this way, the cycle eventually reaches the first subedge of $AC$, namely $a_1A$, which is then identified with the edge $AB$ via $\epsilon_{2,m}$. In this second closed loop of the second edge cycle, the remaining half of the subedges of the subdivided edges follow the face-identification maps given by $(t+1)$ times $(\epsilon_{1,m}\epsilon_{2,m}^{-1}\epsilon_{1,m}^{-1}\epsilon_{2,m})$. This completes the description of the second edge cycle in $M(\epsilon_m)$.

Therefore, all subedges of the subdivided edges, together with the original edges $AB$ and $CD$, are partitioned into exactly two edge cycles in the quotient space $M(\epsilon_m)$, where $m$ is odd. In short, the first edge formed via $\epsilon_{1,m}$ map and the second edge formed via the composition of maps $(\epsilon_2 \epsilon_1^{-1} \epsilon_2^{-1} \epsilon_1)^{t+1} \; \epsilon_{2,m}\; (\epsilon_{1,m}\epsilon_{2,m}^{-1}\epsilon_{1,m}^{-1}\epsilon_{2,m})^{t+1}$. For example, we illustrate the second edge cycle in the case $m=3$ in Figure \ref{fig:edgecycle}.

\end{description}

\begin{figure}[ht]
\centering
\hspace*{-0.23\textwidth} 
\resizebox{1.3\textwidth}{!}{%

}
\vspace{-1.2cm}
\caption{For $m=4$, the second edge cycle}
\label{fig:4.edgecycle}
\end{figure}

\begin{description}
\item[\underline{Case~2} (Adding an even number of vertices i.e. $m$ is even).]
We now consider the case in which an even number of vertices is inserted along each of the subdivided edges (see in Figure~\ref{m.vertex-add}). Let $m=2t$ for $t\in \mathbb{N}\cup \{0\}$. In this situation, each subdivided edge consists of an odd number of subedges. Now we examine the edge cycles in the quotient space $M(\epsilon_m)$.

As in the previous case, the edge $CD$ is identified with itself under the map $\epsilon_{1,m}$, yielding one edge of $M(\epsilon_m)$. To determine another edge cycle, we begin with the edge $AB$ and follow its images under the face-identification maps.

In the first step, the edge $AB$ maps to the first subedge of $AC$, denoted by $A a_1$, via the map $\epsilon_{2,m}^{-1}$. This subedge then maps to the final subedge of $CB$, denoted by $B c_m$, under $\epsilon_{1,m}$. Next, $B c_m$ maps to the second-to-last subedge of $DB$, namely $d_m d_{m-1}$, via $\epsilon_{2,m}$. Applying $\epsilon_{1,m}^{-1}$, this subedge is identified with the second subedge of $AD$, denoted by $b_1 b_2$. This completes the first step of the edge traversal induced by the composition $\epsilon_{2,m}^{-1}\epsilon_{1,m}\epsilon_{2,m}\epsilon_{1,m}^{-1}$.

The edge traversal then continues through successive subedges of $AC$, following the $(+2)$-subedge rule, and proceeds step by step along the subdivided edges. This loop is completed when the edge cycle reaches the second-to-last subedge of $AD$, denoted by $b_{m-1} b_{m}$. This first closed loop follow the face-identification maps given by $t$ times $\epsilon_{2,m}^{-1}\epsilon_{1,m}\epsilon_{2,m}\epsilon_{1,m}^{-1}$. 

From the subedge $b_{m-1} b_m$, the traversal proceeds to the final subedge of $AC$, denoted by $a_m C$, via $\epsilon_{2,m}^{-1}$. This subedge then maps to the first subedge of $CB$, namely $c_1C$, under $\epsilon_{1,m}$. Next, $c_1C$ maps to the final subedge of $AD$, denoted by $Db_m$, via $\epsilon_{2,m}$. Applying $\epsilon_{1,m}$, the subedge $Db_m$ is identified with the first subedge of $DB$, namely $Dd_1$. Finally, $D d_1$ maps to the second subedge of $CB$, denoted by $c_1 c_2$, via $\epsilon_{2,m}^{-1}$. This completes a second loop via the composition map $\epsilon_{2,m}^{-1}\epsilon_{1,m}\epsilon_{2,m}\epsilon_{1,m}\epsilon_{2,m}^{-1}$, which occurs only once and terminates at the subedge $c_1 c_2$.

Then the subedge $c_1 c_2$ then maps to the second-to-last subedge of $AC$, namely $a_m a_{m-1}$, via $\epsilon_{1,m}^{-1}$. Under $\epsilon_{2,m}$, this subedge maps to the third-to-last subedge of $AD$, denoted by $b_{m-1} b_{m-2}$. Applying $\epsilon_{1,m}$, it is then identified with the third subedge of $DB$, namely $d_2 d_3$. This completes the first step of the final edge traversal, which continues inductively following the $(+2)$-subedge rule. Eventually, the traversal reaches the final subedge $d_m B$ of $DB$, which maps to the edge $AB$ via $\epsilon_{2,m}^{-1}$. Thus, the final loop of the second edge cycle follows the face-identification maps $\epsilon_{1,m}^{-1}\epsilon_{2,m}\epsilon_{1,m}\epsilon_{2,m}^{-1}$ repeated \(t\) times. This completes the description of the second edge cycle of \(M(\epsilon_m)\).

Consequently, all subedges of the subdivided edges, together with the original edges $AB$ and $CD$, are partitioned into exactly two edge cycles in the quotient space $M(\epsilon_m)$. In summary, the first edge generated via $\epsilon_{1,m}$ and the second edge formed via the composition of maps followed by $(\epsilon_{2,m}^{-1}\epsilon_{1,m}\epsilon_{2,m}\epsilon_{1,m}^{-1})^{t}\;(\epsilon_{2,m}^{-1}\epsilon_{1,m}\epsilon_{2,m}\epsilon_{1,m}\epsilon_{2,m}^{-1})(\epsilon_{1,m}^{-1}\epsilon_{2,m}\epsilon_{1,m}\epsilon_{2,m}^{-1})^{t}$. For illustration, we depict the second edge cycle in the case $m = 4$ in Figure~\ref{fig:4.edgecycle}.

\end{description}  

From the above argument, all vertices of the faceted $3$-ball are identified to a single vertex  under the face identification $\epsilon_m$. Hence, the quotient space $M(\epsilon_m)$ has exactly one vertex and two edges.
\end{proof}

\begin{lemma}\label{$3$-manifold}
For any non-negative integer \(m\), let \(M(\epsilon_m)\) be defined as above. Then \(M(\epsilon_m)\) is a closed \(3\)-manifold. 
\end{lemma}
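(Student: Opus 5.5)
We will prove the statement with the Euler characteristic criterion of Lemma~\ref{pm-m}. That lemma applies because \(M(\epsilon_m)\) is by construction a closed pseudomanifold of dimension~3 obtained from a face-pairing on a faceted \(3\)-ball. So it suffices to show \(\chi(M(\epsilon_m))=0\) for every \(m\ge 0\). We count the cells of the quotient cell structure dimension by dimension.

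\textbf{Vertices and edges.} By Lemma~\ref{lemma:two_edge}, \(M(\epsilon_m)\) has exactly one vertex and exactly two edges, for both parities of \(m\). The edges are the class of \(CD\) and the single large edge cycle containing \(AB\) and all \(4(m+1)\) subedges. So \(V=1\) and \(E=2\).

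\textbf{Faces and the \(3\)-cell.} The boundary of the faceted ball has four faces \(CDA, CDB, CAB, DAB\), each subdivided only along its edges. The maps \(\epsilon_{1,m}\) and \(\epsilon_{2,m}\) pair them into two classes, so \(F=2\). The interior of the ball gives one \(3\)-cell, so \(C=1\). Hence
\[
\chi(M(\epsilon_m)) = V - E + F - C = 1-2+2-1 = 0,
\]
and Lemma~\ref{pm-m} gives that \(M(\epsilon_m)\) is a closed \(3\)-manifold.

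The main point needing care is that the edge count in Lemma~\ref{lemma:two_edge} is uniform in \(m\). This matters because the count is independent of how many subedges are created, and a miscount would break the computation. We would also check explicitly that the two edge-cycle compositions in both cases use every subedge exactly once, so no extra edge classes arise. As a sanity check that the criterion is being applied correctly, we could verify directly for small \(m\) (say \(m=0,1\)) that the vertex link is a \(2\)-sphere. The link is assembled from the corner triangles at the \(4+4m\) boundary vertices; its Euler characteristic should come out to \(2\), consistent with \(\chi(M)=0\).
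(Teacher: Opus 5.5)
Your proposal is correct and is essentially the paper's own proof: both take the cell counts $f_0=1$, $f_1=2$, $f_2=2$, $f_3=1$ from Lemma~\ref{lemma:two_edge}, get $\chi(M(\epsilon_m))=0$, and conclude via Lemma~\ref{pm-m}. Your optional vertex-link check goes beyond the paper. The corner pieces at the $4m$ subdivision vertices are bigons rather than triangles, but the count $4-(4m+6)+(4m+4)=2$ still confirms that the link is a sphere.
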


\begin{proof}
It is obvious that $M(\epsilon_m)$ is a closed, orientable $3$-dimensional pseudomanifold. Let $f_i$ denote the number of $i$-cells in $M(\epsilon_m)$. By Lemma~\ref{lemma:two_edge}, the resulting cell structure has the following cell counts: $f_0 = 1$, $f_1 = 2$, $f_2 = 2$, and $f_3 = 1$. Hence, the Euler characteristic of $M(\epsilon_m)$ is
\[
\chi\bigl(M(\epsilon_m)\bigr) = f_0 - f_1 + f_2 - f_3 = 1 - 2 + 2 - 1 = 0.
\]
Therefore, by Lemma \ref{pm-m}, it follows that $M(\epsilon_m)$ is a closed, orientable $3$-manifold.
\end{proof}

Thus, applying the face identifications $\epsilon_m$ to Figure~\ref{m.vertex-add}  yields a family of \(3\)-manifolds arising from this construction. The fundamental group of each resulting manifold can be calculated directly from the associated edge-cycle diagrams. For that, let \(x_1\) and \(x_2\) denote generators of a free group, and relators are then assigned by replacing each occurrence of the symbols \(\epsilon_{i,m}\) in the edge-cycle diagrams with the corresponding generators \(x_i\). Consequently, the generators are determined by the face identifications used in the construction, while the relators reflect the combinatorial structure encoded in the edge-cycle diagrams. It is well known that if $M$ is a closed $3$-manifold with exactly one vertex and $M_0$ denotes the non-compact manifold obtained by removing that vertex (that is, the open manifold corresponding to the complement of the $0$-skeleton), then by the Seifert–Van Kampen theorem, $M$ and $M_0$ have the same fundamental group. This observation allows us to compute the fundamental group directly from the face-identification data without further modification. The classical theorem is stated as follows:

\begin{lemma}\cite{cannon2000introduction}\label{Th:f.g}
The closed $3$-manifold $M$ and the non-compact manifold $M_0$ have same isomorphic fundamental groups, admitting the presentation
\[
\langle x_1, x_2, \ldots \mid W_1, W_2, \ldots \rangle .
\]
Here, the generators $x_1, x_2, \ldots$ correspond to the face-identification maps, while the relators $W_1, W_2,$
$\ldots$ are the words determined by the edge cycles described above.
\end{lemma}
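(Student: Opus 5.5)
The plan is to realise $M$ as a CW complex whose cells are exactly the cells induced by the face-pairing, and then compute $\pi_1$ cellularly. Since $M=B^3/\varphi$ has a single vertex $v$, every edge of $M$ is a loop at $v$. The fundamental group of a CW complex is determined by its $2$-skeleton: $\pi_1(M^{(1)})$ is free on the edge loops, and each $2$-cell imposes the relator read along its attaching map. The single $3$-cell adds no relation.

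The first step is to pass to the dual description, which produces the stated generators and relators. Let $c$ be the centre of $B^3$. For each pair of identified faces, draw an arc from $c$ through the centre of one face; in the quotient it returns to $c$ through the centre of the partner face, giving a loop $x_i$ at $c$. Around each edge class of $M$ there is a small disk transverse to that edge. Its boundary runs through the faces met in the edge cycle, so it is exactly the word $W_j$ obtained by reading the face-identification maps around the cycle. The dual cell structure then has one $0$-cell (the point $c$), one $1$-cell per face pair, and one $2$-cell per edge cycle. This dual complex is a deformation retract of $M\setminus\{\text{vertices}\}=M_0$, because $M_0$ is $B^3$ minus its vertices, glued along faces. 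Hence
\[
\pi_1(M_0)\cong\langle x_1,x_2,\ldots\mid W_1,W_2,\ldots\rangle .
\]

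The second step is to show that $\pi_1(M)\cong\pi_1(M_0)$. Here one uses that $M$ is a closed manifold, which holds in our setting by Lemma~\ref{$3$-manifold}; this is also the content of the remark preceding the statement. Take a closed regular neighbourhood $N$ of $v$. It is the cone on the link of $v$, and since $M$ is a manifold that link is $\mathbb{S}^2$, so $N$ is a $3$-ball. Write $M=M_0\cup N$, where $M_0\cap N$ is a collar of the link, homotopy equivalent to $\mathbb{S}^2$. Then Seifert--Van Kampen gives
\[
\pi_1(M)=\pi_1(M_0)*_{\pi_1(\mathbb{S}^2)}\pi_1(N)=\pi_1(M_0),
\]
because both $\pi_1(\mathbb{S}^2)$ and $\pi_1(N)$ are trivial.

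The main obstacle is the deformation retraction of $M_0$ onto the dual $2$-complex. It must be checked that the radial retraction of each truncated piece of the ball is compatible with the face identifications, i.e.\ that the face-pairing maps commute with the retraction on faces. This can be arranged by choosing the dual cells equivariantly: use the face centres, and cone each face barycentrically from its centre. The orientation and direction conventions for reading each $W_j$ must also be fixed consistently with Lemma~\ref{lemma:two_edge}. Applied to $M(\epsilon_m)$, the result gives two generators $x_1,x_2$ and two relators, one for each of the edge cycles found in Lemma~\ref{lemma:two_edge}.
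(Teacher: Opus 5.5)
Your proposal is correct and follows essentially the route the paper indicates. The presentation with face-pairing generators and edge-cycle relators is what the paper takes from Cannon--Floyd--Parry and you derive from the deformation retraction of $M_0$ onto the dual $2$-complex, and Seifert--Van Kampen then identifies $\pi_1(M)$ with $\pi_1(M_0)$ once the vertex link is $\mathbb{S}^2$, just as the remark preceding the lemma says. Your first step never uses the manifold hypothesis, and that is exactly the part the paper relies on later when it applies the lemma to the pseudomanifolds $M(\epsilon_m^{*})$, whose vertex links are not spheres.
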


\begin{theorem}\label{3-sphere}
For each non-negative integer $m$, let $M(\epsilon_m)$ be the $3$-manifold defined above.  
Then the fundamental group $\pi_1(M(\epsilon_m))$ admits a presentation with two generators
$x_1, x_2$ and two relations, given as follows.

\begin{itemize}
\item If $m$ is even, say $m = 2t$ for some $t \in \mathbb{N} \cup \{0\}$, then
\begin{equation}\label{f.g.1}
\pi_1(M(\epsilon_m)) =
\left\langle x_1, x_2 \;\middle|\;
x_1,\;
(x_2^{-1} x_1 x_2 x_1^{-1})^{t}
(x_2^{-1}x_1x_2x_1x_2^{-1})
(x_1^{-1}x_2x_1x_2^{-1})^{t}
\right\rangle.
\end{equation}

\item If $m$ is odd, say $m = 2t + 1$ for some $t \in \mathbb{N} \cup \{0\}$, then
\begin{equation}\label{f.g.2}
\pi_1(M(\epsilon_m)) =
\left\langle x_1, x_2 \;\middle|\;
x_1,\;
(x_2 x_1^{-1} x_2^{-1} x_1)^{t+1}
(x_2)
(x_1 x_2^{-1} x_1^{-1} x_2)^{t+1}
\right\rangle.
\end{equation}
\end{itemize}

Consequently, the manifold $M(\epsilon_m)$ is a $3$-sphere $\mathbb{S}^3$.
\end{theorem}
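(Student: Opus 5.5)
The proof has three steps: read off the presentation from the edge cycles, collapse it to the trivial group, and then apply Proposition~\ref{propo:sphere}.

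\textbf{Presentation.} By Lemma~\ref{Th:f.g}, $\pi_1(M(\epsilon_m))$ has one generator for each face-identification map, so $x_1 \leftrightarrow \epsilon_{1,m}$ and $x_2 \leftrightarrow \epsilon_{2,m}$. It has one relator for each edge cycle, read as the cyclic word of face-pairing maps met while traversing that cycle. Lemma~\ref{lemma:two_edge} shows there are exactly two edge cycles.

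The first cycle is the edge $CD$. It is identified with itself by a single application of $\epsilon_{1,m}$, so its relator is $x_1$.

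The second cycle was computed explicitly in the proof of Lemma~\ref{lemma:two_edge}, and the word there depends on parity:
\begin{itemize}
\item for $m=2t$ it is $(\epsilon_{2,m}^{-1}\epsilon_{1,m}\epsilon_{2,m}\epsilon_{1,m}^{-1})^{t}(\epsilon_{2,m}^{-1}\epsilon_{1,m}\epsilon_{2,m}\epsilon_{1,m}\epsilon_{2,m}^{-1})(\epsilon_{1,m}^{-1}\epsilon_{2,m}\epsilon_{1,m}\epsilon_{2,m}^{-1})^{t}$;
\item for $m=2t+1$ it is $(\epsilon_2\epsilon_1^{-1}\epsilon_2^{-1}\epsilon_1)^{t+1}\epsilon_{2,m}(\epsilon_{1,m}\epsilon_{2,m}^{-1}\epsilon_{1,m}^{-1}\epsilon_{2,m})^{t+1}$.
\end{itemize}
Substituting $x_i$ for $\epsilon_{i,m}$ gives exactly \eqref{f.g.1} and \eqref{f.g.2}.

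This step is the main point of care. I would double-check that each exponent sign matches the direction in which the traversal crosses the corresponding face. The orientation and inverse conventions must match those of Lemma~\ref{Th:f.g}. I would also confirm that the listed sequence really closes up into a single cycle containing every subedge, which is the content of the $(+2)$ subedge rule in Lemma~\ref{lemma:two_edge}.

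\textbf{Simplification.} The relator $x_1$ lets us set $x_1=1$ in the second relator.
\begin{itemize}
\item For $m=2t$: each block $x_2^{-1}x_1x_2x_1^{-1}$ becomes $x_2^{-1}x_2=1$. The middle block becomes $x_2^{-1}x_2x_2^{-1}=x_2^{-1}$, and each block $x_1^{-1}x_2x_1x_2^{-1}$ becomes $1$. The relator therefore reduces to $x_2^{-1}$, so $x_2=1$.
\item For $m=2t+1$: each block $x_2x_1^{-1}x_2^{-1}x_1$ and each block $x_1x_2^{-1}x_1^{-1}x_2$ becomes $1$. The relator reduces to $x_2$, so again $x_2=1$.
\end{itemize}
Hence $\pi_1(M(\epsilon_m))$ is trivial in both cases. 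The point that makes this work is that $x_2$ has total exponent $\pm1$ in the relator. In fact, even without the relator $x_1$, abelianizing already shows the relator has $x_2$-exponent sum $\pm 1$.

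\textbf{Conclusion.} Lemma~\ref{$3$-manifold} shows $M(\epsilon_m)$ is a closed $3$-manifold. It is connected, being a quotient of a ball, and we have just shown it is simply connected. Proposition~\ref{propo:sphere} therefore gives $M(\epsilon_m)\cong\mathbb{S}^3$ for every $m\ge 0$.
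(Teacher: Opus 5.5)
Your proposal is correct and follows the paper's proof essentially step for step. You read off the relator $x_1$ from the self-identified edge $CD$ and the second edge-cycle word from Lemma~\ref{lemma:two_edge} via Lemma~\ref{Th:f.g}, set $x_1=1$ so the second relator collapses to $x_2^{\pm1}$, and conclude with Proposition~\ref{propo:sphere}; your remark that this works because the $x_2$-exponent sum is $\pm1$ just makes explicit what the paper's ``immediately implies $x_2=1$'' leaves implicit.
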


\begin{proof}
By Lemma~\ref{Th:f.g}, the fundamental group of $M(\epsilon_m)$ is obtained directly from its edge cycles. As shown in the proof of Lemma~\ref{lemma:two_edge}, these cycles depend on the choice of the non-negative integer $m$. After replacing the symbols $\epsilon_{i,m}$ by the generators $x_i$, the first edge cycle always yields the relator $x_1$, while the second relator arises from the second edge cycle and satisfies the relations in \cref{f.g.1} and \cref{f.g.2}, varying with the choice of \(m\). In this way, the presentation of the fundamental group $\pi_1(M(\epsilon_m))$ follows naturally from the combinatorial structure of the edge cycles.

From the first relation, we have $x_1 = 1$. Substituting this into the second relation immediately implies that $x_2 = 1$ as well. Thus, the fundamental group $\pi_1(M(\epsilon_m))$ is trivial, and hence, by Proposition~\ref{propo:sphere}, the manifold $M(\epsilon_m)$ is a $3$-sphere.
\end{proof}

\section{Topological Construction of Twist Knot Complement}\label{twist.knot.complement}
In this section, we present a topological construction of the twist knot complement. To achieve this, we modify our faceted $3$-ball (a tetrahedron) by collapsing the edge $CD$ to a single vertex $C$, in each case where the resulting space is a $3$-sphere. The modified faceted $3$-ball will be referred to as the \emph{collapsed faceted $3$-ball}. See Figure \ref{collapsed faceted $3$-ball}.

\begin{figure}[!ht]
    \centering
    
    \begin{subfigure}[t]{0.45\textwidth}
        \centering
   \tikzset{every picture/.style={line width=0.75pt}}
        \begin{tikzpicture}[scale=1pt, x=0.77pt, y=0.77pt, yscale=-1, xscale=1]
            
\draw   (426.18,1818.06) .. controls (426.18,1818.06) and (426.18,1818.06) .. (426.18,1818.06) .. controls (426.18,1818.06) and (426.18,1818.06) .. (426.18,1818.06) .. controls (410.84,1831.19) and (386.69,1832.83) .. (372.24,1821.74) .. controls (357.78,1810.64) and (358.48,1791.01) .. (373.82,1777.88) .. controls (389.15,1764.75) and (413.3,1763.11) .. (427.76,1774.2) .. controls (445.22,1787.59) and (453.94,1794.28) .. (453.94,1794.29) .. controls (453.94,1794.28) and (444.68,1802.21) .. (426.18,1818.06) -- cycle ;
\draw   (483.15,1774.43) .. controls (499.69,1763.18) and (524.03,1764.54) .. (537.53,1777.46) .. controls (551.03,1790.38) and (548.58,1809.98) .. (532.04,1821.23) .. controls (515.51,1832.48) and (491.16,1831.12) .. (477.66,1818.19) .. controls (477.66,1818.19) and (477.66,1818.19) .. (477.66,1818.19) .. controls (461.36,1802.6) and (453.21,1794.8) .. (453.22,1794.79) .. controls (453.21,1794.8) and (463.2,1788) .. (483.15,1774.43) -- cycle ;
\draw    (361.63,1799.62) .. controls (332.69,1939.12) and (564.25,1942.48) .. (546.33,1798.78) ;
\draw   (453.26,1794.29) .. controls (453.26,1793.83) and (453.57,1793.45) .. (453.95,1793.45) .. controls (454.33,1793.46) and (454.64,1793.83) .. (454.64,1794.3) .. controls (454.64,1794.76) and (454.32,1795.14) .. (453.94,1795.14) .. controls (453.56,1795.14) and (453.25,1794.76) .. (453.26,1794.29) -- cycle ;
\draw   (361.63,1801.3) .. controls (361.63,1800.83) and (361.94,1800.46) .. (362.32,1800.46) .. controls (362.7,1800.46) and (363.01,1800.84) .. (363.01,1801.31) .. controls (363.01,1801.77) and (362.7,1802.15) .. (362.32,1802.15) .. controls (361.94,1802.14) and (361.63,1801.77) .. (361.63,1801.3) -- cycle ;
\draw   (544.95,1798.77) .. controls (544.95,1798.31) and (545.27,1797.93) .. (545.65,1797.94) .. controls (546.03,1797.94) and (546.34,1798.32) .. (546.33,1798.78) .. controls (546.33,1799.25) and (546.02,1799.62) .. (545.64,1799.62) .. controls (545.26,1799.62) and (544.95,1799.24) .. (544.95,1798.77) -- cycle ;

\draw (445.14,1763.75) node [anchor=north west][inner sep=0.75pt]    {$C$};
\draw (337,1785.6) node [anchor=north west][inner sep=0.75pt]    {$A$};
\draw (550,1785.6) node [anchor=north west][inner sep=0.75pt]    {$B$};
\draw (388.47,1750) node [anchor=north west][inner sep=0.75pt]    {$e_{1}$};
\draw (390.85,1835) node [anchor=north west][inner sep=0.75pt]    {$e_{2}$};
\draw (507.32,1835) node [anchor=north west][inner sep=0.75pt]    {$e_{4}$};
\draw (505.25,1750) node [anchor=north west][inner sep=0.75pt]    {$e_{3}$};
\draw (445.85,1885) node [anchor=north west][inner sep=0.75pt]    {$e_{5}$};

        \end{tikzpicture}
        \caption{Collapsed Faceted $3$-ball without subedge}
        \label{0.faceted_cell}
    \end{subfigure}%
    \hfill
    \begin{subfigure}[t]{0.45\textwidth}
        \centering

\tikzset{every picture/.style={line width=0.75pt}} 

\begin{tikzpicture}[x=0.75pt,y=0.75pt,yscale=-1,xscale=1]

\draw   (456.59,3190.98) .. controls (456.59,3190.98) and (456.59,3190.98) .. (456.59,3190.98) .. controls (456.59,3190.98) and (456.59,3190.98) .. (456.59,3190.98) .. controls (438.63,3205.92) and (410.33,3207.8) .. (393.39,3195.17) .. controls (376.44,3182.54) and (377.27,3160.19) .. (395.24,3145.25) .. controls (413.2,3130.31) and (441.5,3128.43) .. (458.45,3141.06) .. controls (458.45,3141.06) and (458.45,3141.06) .. (458.45,3141.06) .. controls (478.9,3156.3) and (489.13,3163.93) .. (489.12,3163.93) .. controls (489.13,3163.93) and (478.29,3172.94) .. (456.59,3190.98) -- cycle ;
\draw   (523.35,3141.32) .. controls (523.35,3141.32) and (523.35,3141.32) .. (523.35,3141.32) .. controls (542.72,3128.52) and (571.24,3130.06) .. (587.05,3144.77) .. controls (602.86,3159.48) and (599.98,3181.79) .. (580.62,3194.59) .. controls (561.25,3207.39) and (532.73,3205.85) .. (516.92,3191.14) .. controls (497.83,3173.38) and (488.29,3164.5) .. (488.28,3164.5) .. controls (488.29,3164.5) and (499.98,3156.77) .. (523.35,3141.32) -- cycle ;
\draw    (380.97,3170) .. controls (347.06,3328.78) and (618.36,3332.6) .. (597.36,3169.04) ;
\draw   (390.53,3149.19) .. controls (390.54,3148.66) and (390.9,3148.23) .. (391.35,3148.24) .. controls (391.79,3148.24) and (392.15,3148.67) .. (392.15,3149.2) .. controls (392.15,3149.73) and (391.79,3150.15) .. (391.34,3150.15) .. controls (390.89,3150.15) and (390.53,3149.72) .. (390.53,3149.19) -- cycle ;
\draw   (388.13,3190.71) .. controls (388.13,3190.18) and (388.49,3189.75) .. (388.94,3189.75) .. controls (389.39,3189.76) and (389.75,3190.19) .. (389.74,3190.72) .. controls (389.74,3191.25) and (389.38,3191.67) .. (388.93,3191.67) .. controls (388.49,3191.67) and (388.12,3191.24) .. (388.13,3190.71) -- cycle ;
\draw   (504.73,3152.93) .. controls (504.73,3152.4) and (505.1,3151.98) .. (505.54,3151.98) .. controls (505.99,3151.98) and (506.35,3152.41) .. (506.35,3152.94) .. controls (506.35,3153.47) and (505.98,3153.9) .. (505.54,3153.89) .. controls (505.09,3153.89) and (504.73,3153.46) .. (504.73,3152.93) -- cycle ;
\draw   (500.63,3176.84) .. controls (500.63,3176.31) and (501,3175.88) .. (501.45,3175.88) .. controls (501.89,3175.89) and (502.25,3176.32) .. (502.25,3176.85) .. controls (502.25,3177.37) and (501.89,3177.8) .. (501.44,3177.8) .. controls (500.99,3177.8) and (500.63,3177.37) .. (500.63,3176.84) -- cycle ;
\draw   (488.31,3163.93) .. controls (488.32,3163.4) and (488.68,3162.97) .. (489.13,3162.98) .. controls (489.57,3162.98) and (489.94,3163.41) .. (489.93,3163.94) .. controls (489.93,3164.47) and (489.57,3164.9) .. (489.12,3164.89) .. controls (488.67,3164.89) and (488.31,3164.46) .. (488.31,3163.93) -- cycle ;
\draw   (380.97,3169.97) .. controls (380.97,3169.96) and (381.11,3169.95) .. (381.28,3169.95) .. controls (381.45,3169.96) and (381.59,3169.97) .. (381.59,3169.98) .. controls (381.59,3169.99) and (381.45,3170) .. (381.28,3170) .. controls (381.11,3170) and (380.97,3169.99) .. (380.97,3169.97) -- cycle ;
\draw   (595.75,3169.03) .. controls (595.75,3168.5) and (596.11,3168.07) .. (596.56,3168.08) .. controls (597.01,3168.08) and (597.37,3168.51) .. (597.37,3169.04) .. controls (597.36,3169.57) and (597,3170) .. (596.55,3169.99) .. controls (596.11,3169.99) and (595.74,3169.56) .. (595.75,3169.03) -- cycle ;
\draw   (462.72,3185.06) .. controls (462.73,3184.53) and (463.09,3184.1) .. (463.54,3184.1) .. controls (463.98,3184.1) and (464.35,3184.53) .. (464.34,3185.06) .. controls (464.34,3185.59) and (463.98,3186.02) .. (463.53,3186.02) .. controls (463.08,3186.02) and (462.72,3185.59) .. (462.72,3185.06) -- cycle ;
\draw   (459.83,3142.94) .. controls (459.83,3142.41) and (460.2,3141.98) .. (460.65,3141.98) .. controls (461.09,3141.99) and (461.45,3142.42) .. (461.45,3142.95) .. controls (461.45,3143.47) and (461.09,3143.9) .. (460.64,3143.9) .. controls (460.19,3143.9) and (459.83,3143.47) .. (459.83,3142.94) -- cycle ;
\draw   (577.73,3195.7) .. controls (577.73,3195.17) and (578.1,3194.75) .. (578.54,3194.75) .. controls (578.99,3194.75) and (579.35,3195.18) .. (579.35,3195.71) .. controls (579.35,3196.24) and (578.98,3196.67) .. (578.54,3196.67) .. controls (578.09,3196.66) and (577.73,3196.23) .. (577.73,3195.7) -- cycle ;
\draw   (574.99,3137.34) .. controls (575,3136.82) and (575.36,3136.39) .. (575.81,3136.39) .. controls (576.25,3136.39) and (576.61,3136.82) .. (576.61,3137.35) .. controls (576.61,3137.88) and (576.25,3138.31) .. (575.8,3138.31) .. controls (575.35,3138.3) and (574.99,3137.87) .. (574.99,3137.34) -- cycle ;
\draw   (406.04,3137.88) .. controls (406.04,3137.35) and (406.41,3136.93) .. (406.85,3136.93) .. controls (407.3,3136.93) and (407.66,3137.36) .. (407.66,3137.89) .. controls (407.66,3138.42) and (407.29,3138.85) .. (406.85,3138.85) .. controls (406.4,3138.84) and (406.04,3138.41) .. (406.04,3137.88) -- cycle ;
\draw   (422.66,3133.36) .. controls (422.66,3132.83) and (423.02,3132.4) .. (423.47,3132.41) .. controls (423.91,3132.41) and (424.28,3132.84) .. (424.27,3133.37) .. controls (424.27,3133.9) and (423.91,3134.32) .. (423.46,3134.32) .. controls (423.01,3134.32) and (422.65,3133.89) .. (422.66,3133.36) -- cycle ;
\draw   (405.34,3201.62) .. controls (405.35,3201.09) and (405.71,3200.67) .. (406.16,3200.67) .. controls (406.6,3200.67) and (406.96,3201.1) .. (406.96,3201.63) .. controls (406.96,3202.16) and (406.6,3202.59) .. (406.15,3202.59) .. controls (405.7,3202.58) and (405.34,3202.15) .. (405.34,3201.62) -- cycle ;
\draw   (427.07,3202.89) .. controls (427.07,3202.36) and (427.43,3201.93) .. (427.88,3201.93) .. controls (428.33,3201.93) and (428.69,3202.36) .. (428.68,3202.89) .. controls (428.68,3203.42) and (428.32,3203.85) .. (427.87,3203.85) .. controls (427.42,3203.85) and (427.06,3203.41) .. (427.07,3202.89) -- cycle ;
\draw   (520.32,3142.27) .. controls (520.32,3141.75) and (520.69,3141.32) .. (521.13,3141.32) .. controls (521.58,3141.32) and (521.94,3141.75) .. (521.94,3142.28) .. controls (521.94,3142.81) and (521.57,3143.24) .. (521.13,3143.24) .. controls (520.68,3143.23) and (520.32,3142.8) .. (520.32,3142.27) -- cycle ;
\draw   (533.53,3135.97) .. controls (533.53,3135.44) and (533.89,3135.02) .. (534.34,3135.02) .. controls (534.79,3135.02) and (535.15,3135.45) .. (535.15,3135.98) .. controls (535.14,3136.51) and (534.78,3136.94) .. (534.33,3136.93) .. controls (533.89,3136.93) and (533.53,3136.5) .. (533.53,3135.97) -- cycle ;
\draw   (534.75,3201.23) .. controls (534.75,3200.7) and (535.11,3200.27) .. (535.56,3200.27) .. controls (536.01,3200.27) and (536.37,3200.71) .. (536.37,3201.23) .. controls (536.36,3201.76) and (536,3202.19) .. (535.55,3202.19) .. controls (535.11,3202.19) and (534.75,3201.76) .. (534.75,3201.23) -- cycle ;
\draw   (516.1,3190.96) .. controls (516.1,3190.43) and (516.47,3190.01) .. (516.92,3190.01) .. controls (517.36,3190.01) and (517.72,3190.44) .. (517.72,3190.97) .. controls (517.72,3191.5) and (517.36,3191.93) .. (516.91,3191.92) .. controls (516.46,3191.92) and (516.1,3191.49) .. (516.1,3190.96) -- cycle ;
\draw   (379.69,3170.05) .. controls (379.69,3169.38) and (380.27,3168.83) .. (380.99,3168.83) .. controls (381.7,3168.83) and (382.28,3169.38) .. (382.28,3170.05) .. controls (382.28,3170.73) and (381.7,3171.27) .. (380.99,3171.27) .. controls (380.27,3171.27) and (379.69,3170.73) .. (379.69,3170.05) -- cycle ;

\draw (480.23,3141.93) node [anchor=north west][inner sep=0.75pt]  [font=\small]  {$C$};
\draw (363,3160) node [anchor=north west][inner sep=0.75pt]  [font=\small,rotate=-359.71,xslant=0.05]  {$A$};
\draw (601,3161) node [anchor=north west][inner sep=0.75pt]  [font=\small]  {$B$};
\draw (379,3133) node [anchor=north west][inner sep=0.75pt]  [font=\footnotesize]  {$a_{1}$};
\draw (379.97,3194) node [anchor=north west][inner sep=0.75pt]  [font=\footnotesize]  {$b_{1}$};
\draw (497.27,3134) node [anchor=north west][inner sep=0.75pt]  [font=\footnotesize]  {$c_{1}$};
\draw (491,3183) node [anchor=north west][inner sep=0.75pt]  [font=\footnotesize]  {$d_{1}$};
\draw (396,3122) node [anchor=north west][inner sep=0.75pt]  [font=\footnotesize]  {$a_{2}$};
\draw (399.08,3206) node [anchor=north west][inner sep=0.75pt]  [font=\footnotesize]  {$b_{2}$};
\draw (510.57,3125) node [anchor=north west][inner sep=0.75pt]  [font=\footnotesize]  {$c_{2}$};
\draw (507,3196) node [anchor=north west][inner sep=0.75pt]  [font=\footnotesize]  {$d_{2}$};
\draw (415,3116) node [anchor=north west][inner sep=0.75pt]  [font=\footnotesize,rotate=-4.48,xslant=-0.12]  {$a_{3}$};
\draw (458,3125) node [anchor=north west][inner sep=0.75pt]  [font=\footnotesize,rotate=-4.48,xslant=-0.12]  {$a_{m}$};
\draw (421.91,3209) node [anchor=north west][inner sep=0.75pt]  [font=\footnotesize]  {$b_{3}$};
\draw (462.61,3191) node [anchor=north west][inner sep=0.75pt]  [font=\footnotesize]  {$b_{m}$};
\draw (525.08,3118) node [anchor=north west][inner sep=0.75pt]  [font=\footnotesize]  {$c_{3}$};
\draw (573.44,3121) node [anchor=north west][inner sep=0.75pt]  [font=\footnotesize]  {$c_{m}$};
\draw (525.28,3207) node [anchor=north west][inner sep=0.75pt]  [font=\footnotesize]  {$d_{3}$};
\draw (576.02,3201) node [anchor=north west][inner sep=0.75pt]  [font=\footnotesize]  {$d_{m}$};
\draw (431,3119) node [anchor=north west][inner sep=0.75pt]   [align=left] {.};
\draw (440,3120) node [anchor=north west][inner sep=0.75pt]   [align=left] {.};
\draw (448,3123) node [anchor=north west][inner sep=0.75pt]   [align=left] {.};
\draw (556,3210) node [anchor=north west][inner sep=0.75pt]   [align=left] {.};
\draw (455,3198) node [anchor=north west][inner sep=0.75pt]   [align=left] {.};
\draw (447,3205) node [anchor=north west][inner sep=0.75pt]   [align=left] {.};
\draw (437,3209) node [anchor=north west][inner sep=0.75pt]   [align=left] {.};
\draw (568,3207) node [anchor=north west][inner sep=0.75pt]   [align=left] {.};
\draw (544,3212) node [anchor=north west][inner sep=0.75pt]   [align=left] {.};
\draw (541,3119) node [anchor=north west][inner sep=0.75pt]   [align=left] {.};
\draw (552,3118) node [anchor=north west][inner sep=0.75pt]   [align=left] {.};
\draw (564,3121) node [anchor=north west][inner sep=0.75pt]   [align=left] {.};

\end{tikzpicture}
        \caption{Collapsed Faceted $3$-ball inserted $m$-vertices}
        \label{m.cell-PTFP}
    \end{subfigure}

    \caption{Cellulation of the collapsed faceted $3$-ball}
    \label{collapsed faceted $3$-ball}
\end{figure}

Due to modification on the faceted $3$-ball, the face-identification maps $\epsilon_m$ go under some modification in order to apply to the collapsed faceted $3$-ball. The modified face-identification maps are described as follows: 

\[
\epsilon_{1,m}^{*} :
\begin{pmatrix}
C & b_m & \cdots & b_1 & A & a_1 & a_2 & \cdots & a_m \\
C & d_1 & \cdots & d_m & B & c_m & c_{m-1} & \cdots & c_1
\end{pmatrix}
\]

\[
\epsilon_{2,m}^{*} :
\begin{pmatrix}
C & a_m & \cdots & a_1 & A & B & c_m & \cdots & c_2 & c_1 \\
b_m & b_{m-1} & \cdots & A & B & d_m & d_{m-1} & \cdots & d_1 & C
\end{pmatrix}.
\]

In the case $m=0$, the collapse of the edge $CD$ to a single vertex causes certain edges to become indistinguishable at the vertex level. To resolve this, we relabel the edges by $e_i$, following the notation shown in Figure~\ref{0.faceted_cell}. Under this relabeling, the associated face-identification maps are defined as follows:

\[
\epsilon_{1,0}^{*} : \begin{pmatrix}
e_2 & e_1  \\
e_4 & e_3
\end{pmatrix}
\qquad
\epsilon_{2,0}^{*} : \begin{pmatrix}
e_1 & e_5 & e_3\\
e_5 & e_4 & e_2
\end{pmatrix}.
\]

In the same manner, we introduce the maps $\epsilon_{1,m}^{*}$ and $\epsilon_{2,m}^{*}$ as the modified face-identification maps for the collapsed faceted $3$-ball, and the collection of all these maps is defined by $\epsilon_m^{*}=\{(\epsilon_{1,m}^{*})^{\pm1},(\epsilon_{2,m}^{*})^{\pm1}\}$.

Following the approach of Section~\ref{construct_manifolds}, we apply the modified face-identification maps $\epsilon_m^{*}$ to the collapsed faceted $3$-ball illustrated in Figure~\ref{m.cell-PTFP}. Then the resulting quotient space is a closed $3$-dimensional pseudomanifold with a single vertex, which we denote by $M(\epsilon_m^{*})$. Varying the parameter $m$ therefore yields a family of closed $3$-dimensional pseudomanifolds, each having exactly one vertex.

The following theorem, motivated by \cite{cannon2000introduction}, establishes that the resulting pseudomanifold $M(\epsilon_m^{*})$ has a unique vertex whose link is a torus rather than a sphere.  Moreover, it shows that removing this vertex from $M(\epsilon_m^{*})$ yields a non-compact $3$-manifold homeomorphic to a knot complement, which can be identified by computing its fundamental group.

\medskip

\begin{theorem}\label{Th:2.2}
Let $M_0(\epsilon_m^{*})$ be obtained by removing the vertex from the pseudomanifold $M(\epsilon_m^{*})$. Then $M_0(\epsilon_m^{*})$ is a  non-compact $3$-manifold  homeomorphic to the complement of a knot in $\mathbb{S}^3$. Moreover, the fundamental group $\pi_1\big(M_0(\epsilon_m^{*})\big)$ admits a presentation obtained from the presentation of $\pi_1\big(M(\epsilon_m)\big)$ by deleting the relators corresponding to the collapsed edge cycles.
\end{theorem}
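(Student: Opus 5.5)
The plan is to realize $M(\epsilon_m^{*})$ as the result of collapsing, inside the $3$-sphere $M(\epsilon_m)$ of Theorem~\ref{3-sphere}, the closed curve that forms the first edge cycle. By Lemma~\ref{lemma:two_edge}, the edge $CD$ is identified only with itself under $\epsilon_{1,m}$. Since $M(\epsilon_m)$ has a single vertex, this edge becomes a closed curve $K\subset M(\epsilon_m)\cong\mathbb{S}^3$, passing once through the vertex. I will check that $K$ is a tame simple closed curve, which holds because it is a $1$-cell of a finite cell structure, hence PL. So $K$ is a knot in $\mathbb{S}^3$.

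The key step is to compare the two quotients. The maps $\epsilon_{i,m}^{*}$ are exactly the maps $\epsilon_{i,m}$ composed with the collapse $CD\to C$. Hence there is a quotient map $q\colon M(\epsilon_m)\to M(\epsilon_m^{*})$ that crushes $K$ to the single vertex and is injective elsewhere. Since $K$ is compact and $M(\epsilon_m)$ is compact Hausdorff, $q$ induces a homeomorphism
\[
M(\epsilon_m)\setminus K \;\cong\; M(\epsilon_m^{*})\setminus\{\text{vertex}\} = M_0(\epsilon_m^{*}).
\]
It follows that $M_0(\epsilon_m^{*})\cong \mathbb{S}^3\setminus K$ is a non-compact $3$-manifold. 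For $m=0$ I will redo this comparison using the relabelled edges $e_i$, and confirm that the collapse does not create any further identifications.

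As a consistency check I will describe the vertex link. A regular neighbourhood of $K$ is a solid torus, and its boundary maps homeomorphically onto the link of the vertex in $M(\epsilon_m^{*})$. So the link is a torus, which matches the claim preceding the theorem. One can also verify this combinatorially: $M(\epsilon_m^{*})$ has one vertex, one edge and two faces, so its Euler characteristic is $1\neq 0$. By Lemma~\ref{pm-m} it is therefore not a manifold.

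For the fundamental group, I will apply Lemma~\ref{Th:f.g} to the face-pairing $\epsilon_m^{*}$. The generators are again $x_1,x_2$, one for each face pair. After the collapse the edge $CD$ no longer exists, so the only remaining edge cycles are the subedges together with $AB$. These cycles are traversed by exactly the same sequence of maps as in the proof of Lemma~\ref{lemma:two_edge}, because the collapse does not change which subedge is sent to which. The edge-cycle relator is therefore the second relator from Theorem~\ref{3-sphere}, and the relator $x_1$ coming from $CD$ disappears. The main obstacle is that Lemma~\ref{Th:f.g} is stated for manifolds, while here it is applied to the pseudomanifold $M(\epsilon_m^{*})$. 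To handle this, I will work directly with $M_0(\epsilon_m^{*})$. It deformation retracts onto the complement of an open cone neighbourhood of the vertex, which is a compact $3$-manifold with torus boundary. That manifold has a cell structure made of the truncated ball with its faces and edges, and its $2$-skeleton gives the stated presentation by van Kampen, with one generator for each face pair and one relator for each edge. Removing the vertex does not alter $\pi_1$ of the dual $2$-complex, which gives the presentation claimed in the theorem.
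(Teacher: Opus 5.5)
Your proposal is correct and follows essentially the same route as the paper. You identify $M(\epsilon_m^{*})$ with the quotient of $M(\epsilon_m)\cong\mathbb{S}^3$ obtained by crushing the circle formed by the self-paired edge $CD$, so that $M_0(\epsilon_m^{*})$ is the complement of that circle, and you read off $\pi_1$ from the face-pairing presentation with the relator of $CD$ deleted. You also supply details the paper leaves implicit: why $q$ is well defined and a homeomorphism off $K$, why $K$ is tame, and why the edge-cycle presentation computes $\pi_1$ of the vertex complement rather than of the pseudomanifold. For that last point, the clean justification is the dual spine, equivalently the handle structure with one $1$-handle per face pair and one $2$-handle per edge class. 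The $2$-skeleton of the truncated complex does not give it directly, since it also contains the boundary torus.
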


\begin{proof}
It is obvious that applying the modified face-identification maps $\epsilon_m^{*}$ to the collapsed faceted $3$-ball produces a closed $3$-dimensional pseudomanifold. Now suppose that $f_i'$ denotes the number of $i$-cells in the resulting complex $M(\epsilon_m^{*})$. Collapsing an edge on the faceted $3$-ball reduces the number of edge cycles by one; hence, the new number of \( 1 \)-cells is \( f_1' =  1 \). Since collapsing edges does not increase the number of vertex classes, the new \( f_0 \) remains \( 1 \). It follows that the Euler characteristic of $M(\epsilon_m^{*})$ is equal to $1$. Therefore, $M(\epsilon_m^{*})$ is indeed a closed $3$-dimensional pseudomanifold with exactly one vertex.

Now removing that vertex yields a non-compact manifold, which is denoted by $M_0(\epsilon^{*}_m)$. The \( 1 \)-skeleton of \( M(\epsilon_m) \) consists of a single vertex and is isomorphic to a wedge of two circles. When passing from \( M(\epsilon_m) \) to \( M(\epsilon^{*}_m) \), one of these circles is collapsed to a point. Thus, the complement of the vertex in \( M(\epsilon^{*}_m) \) is homeomorphic to the complement of one circle in \( M(\epsilon_m) \). Since \( M(\epsilon_m) \) is homeomorphic to \( \mathbb{S}^3 \). Therefore, \( M_0(\epsilon^{*}_m) \) is homeomorphic to the complement of a knot in \( \mathbb{S}^3 \).

Finally, the assertion on the fundamental group follows from Lemma $\ref{Th:f.g}$, since the corresponding group relators vanish precisely when the associated edge cycles are collapsed.
\end{proof}

\begin{corollary}\label{f.g.vertex.complement}
Let $M_0(\epsilon_m^{*})$ be defined as above for each non-negative integer $m$.  
Then, by Theorem~$\ref{Th:2.2}$, the fundamental group of $M_0(\epsilon_m^{*})$ admits the following presentations.

\begin{itemize}
\item If $m$ is even, say $m = 2t$ for some $t \in \mathbb{N} \cup \{0\}$, then
\begin{equation}\label{f.g_vertex_complement1}
\pi_1(M_0(\epsilon_m^{*})) =
\left\langle x_1, x_2 \;\middle|\;
(x_2^{-1} x_1 x_2 x_1^{-1})^{t}
(x_2^{-1}x_1x_2x_1x_2^{-1})
(x_1^{-1}x_2x_1x_2^{-1})^{t}
\right\rangle.
\end{equation}

\item If $m$ is odd, say $m = (2t + 1)$ for some $t \in \mathbb{N} \cup \{0\}$, then
\begin{equation}\label{f.g_vertex_complement2}
\pi_1(M_0(\epsilon_m^{*})) =
\left\langle x_1, x_2 \;\middle|\;
(x_2 x_1^{-1} x_2^{-1} x_1)^{t+1}
(x_2)
(x_1 x_2^{-1} x_1^{-1} x_2)^{t+1}
\right\rangle.
\end{equation}
\end{itemize}
\end{corollary}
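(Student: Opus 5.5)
The plan is to read the presentation off directly from Theorem~\ref{Th:2.2}, which says that $\pi_1\big(M_0(\epsilon_m^{*})\big)$ is obtained from the presentation of $\pi_1\big(M(\epsilon_m)\big)$ in Theorem~\ref{3-sphere} by deleting the relators of the collapsed edge cycles. The generators are unchanged: the collapsed faceted $3$-ball still carries exactly two face pairings, $\epsilon_{1,m}^{*}$ and $\epsilon_{2,m}^{*}$. So by Lemma~\ref{Th:f.g} we keep $x_1,x_2$ as generators and only need to determine which edge-cycle relators survive.

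First, identify the collapsed cycle. In the proof of Lemma~\ref{lemma:two_edge}, the edge $CD$ forms its own edge cycle under $\epsilon_{1,m}$, and this cycle contributes the relator $x_1$. Collapsing $CD$ to the vertex $C$ removes this $1$-cell, so the relator $x_1$ is deleted.

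Second, show the remaining cycle is unchanged. Compare $\epsilon_{i,m}^{*}$ with $\epsilon_{i,m}$: the only difference is that $D$ is replaced by $C$. Hence every subedge of $CA$, $CB$, $DA$, $DB$, together with $AB$, is sent to the same subedge as before, with $D$ relabelled as $C$. The traversal in Case~1 (for $m=2t+1$) and Case~2 (for $m=2t$) therefore goes through the same sequence of face pairings. Substituting $x_i$ for $\epsilon_{i,m}$ gives exactly the second relators of \cref{f.g.1} and \cref{f.g.2}. This yields one-relator presentations with precisely the words stated in \cref{f.g_vertex_complement1} and \cref{f.g_vertex_complement2}.

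The main point needing care is the degenerate case $m=0$, where after collapsing the subedges cannot be named by their endpoints. There I would use the relabelled edges $e_1,\dots,e_5$ and trace the cycle under the maps $\epsilon_{1,0}^{*}$ and $\epsilon_{2,0}^{*}$ written in terms of the $e_i$:
\[
e_5\xrightarrow{\epsilon_2^{-1}}e_1\xrightarrow{\epsilon_1}e_3\xrightarrow{\epsilon_2}e_2\xrightarrow{\epsilon_1}e_4\xrightarrow{\epsilon_2^{-1}}e_5.
\]
This gives the word $x_2^{-1}x_1x_2x_1x_2^{-1}$, matching the $t=0$ case of \cref{f.g_vertex_complement1}. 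I would also check that all five edges occur in this single cycle, consistent with $f_1'=1$ in the proof of Theorem~\ref{Th:2.2}.
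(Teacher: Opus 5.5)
Your proposal is correct and follows the same route as the paper, which obtains the corollary directly from Theorem~\ref{Th:2.2}: it discards the relator $x_1$ of the collapsed edge $CD$ and keeps the second edge-cycle relator of Theorem~\ref{3-sphere}. Your check that $\epsilon_{i,m}^{*}$ differs from $\epsilon_{i,m}$ only by relabelling $D$ as $C$, so the surviving edge cycle is unchanged, and your direct trace $e_5\to e_1\to e_3\to e_2\to e_4\to e_5$ for $m=0$ correctly fill in details the paper leaves implicit.
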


From Theorem \ref{Th:2.2}, if we remove this singular vertex from the pseudomanifold, the remaining space, called the \emph{vertex complement}, is homeomorphic to the complement of a \emph{knot} in $S^3$. Naturally, the following question arises:
\smallskip

\noindent\textit{For each non-negative integer $m$, which knot has a complement homeomorphic to the vertex complement $M_0(\epsilon_m^{*})$}?

\medskip

We now proceed to resolve this question. First, we state the main theorem, which provides a complete topological identification of the vertex complement arising from the above construction. The proof of this theorem is presented in Section~\ref{main_theorem_proof}. Before presenting the proof, we first assemble the necessary results that will be used throughout in the proof. The precise statement is given in the following theorem.

\begin{maintheorem}
For each non-negative integer $m$, let $M(\epsilon_m^{*})$ be the pseudomanifold defined above, and let $M_0(\epsilon_m^{*})$ denote its vertex complement obtained by deleting the singular vertex of $M(\epsilon_m^{*})$. Then, in the cases described below, the space $M_0(\epsilon_m^{*})$ is homeomorphic to the complement of a twist knot:
\begin{itemize}
    \item If $m = 0$, then $M_0(\epsilon_0^{*})$ is homeomorphic to the \textbf{trefoil knot} $(3_1)$ complement.

    \item If $m$ is even and $m\geq2$, 
    then $M_0(\epsilon_m^{*})$ is homeomorphic to the \textbf{$(m+3)_2$ knot} complement.

    \item If $m$ is odd and $m\geq1$, then $M_0(\epsilon_m^{*})$ is homeomorphic to the \textbf{$(m+3)_1$ knot} complement.
\end{itemize}
\end{maintheorem}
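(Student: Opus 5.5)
The plan is to reduce the identification to a comparison of fundamental groups. By Theorem~\ref{Th:2.2}, $M_0(\epsilon_m^{*})$ is homeomorphic to $\mathbb{S}^3\setminus K_m$ for some knot $K_m$. Corollary~\ref{f.g.vertex.complement} gives an explicit one-relator presentation of $\pi_1(\mathbb{S}^3\setminus K_m)$. I would compare this with the Wirtinger presentation (Lemma~\ref{ref: wirtinger presentation}) of the candidate twist knot $T$: $3_1$ for $m=0$, $(m+3)_2$ for even $m\ge 2$, and $(m+3)_1$ for odd $m$. Once an isomorphism $\pi_1(\mathbb{S}^3\setminus K_m)\cong\pi_1(\mathbb{S}^3\setminus T)$ is in hand, Lemma~\ref{not composite} shows $K_m$ is prime, because every twist knot is prime. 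Lemma~\ref{prime_knot} then gives $K_m\simeq T$. Lemma~\ref{equivalent_knot} finally yields the homeomorphism of complements.

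The first step is to normalise the twist-knot group. I would take the standard diagram of a twist knot with $n$ half-twists: a clasp with two crossings plus a twist region with $n$ crossings, where $n=m+1$ so that the crossing number is $m+3$. Label the arcs, record the Wirtinger relators via Figure~\ref{fig:relator-from-crossing}, and drop one relator. The arcs inside the twist region are then eliminated successively: each twist crossing expresses the next arc as a conjugate of the previous one, by a power of a fixed word in the two clasp generators $a,b$. This reduces the group to the classical two-generator, one-relator form
\[
\langle a,b \mid a\,w^{k} = w^{k}\, b\rangle, \qquad w = b a^{-1} b^{-1} a \ \text{(or its analogue)},
\]
where $k$ depends on $\lceil n/2\rceil$ and the parity of $n$ fixes the exact shape of $w$. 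The case $m=0$, the trefoil, can be checked directly: the relator $x_2^{-1}x_1x_2x_1x_2^{-1}$ becomes $x_1x_2x_1=x_2x_1x_2$ after conjugation, which is the trefoil group.

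The second step matches this with the relators of Corollary~\ref{f.g.vertex.complement}. For odd $m=2t+1$, the relator $(x_2x_1^{-1}x_2^{-1}x_1)^{t+1}x_2(x_1x_2^{-1}x_1^{-1}x_2)^{t+1}$ has the form $W^{t+1}x_2\overline{W}^{t+1}$. Here $\overline{W}=x_1x_2^{-1}x_1^{-1}x_2$, which is the inverse of $W=x_2x_1^{-1}x_2^{-1}x_1$ with $x_1$ and $x_2$ interchanged. I would apply Tietze moves and Nielsen substitutions, for instance replacing $x_1$ by $x_1x_2$ or conjugating, to turn this into the shape $a w^{k} b^{-1} w^{-k}$ with $k=t+1$. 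In parallel, I would check that abelianisation gives $\mathbb{Z}$ with meridional generators. For even $m=2t$, the middle factor $x_2^{-1}x_1x_2x_1x_2^{-1}$ plays the role of the clasp and the two $t$-th powers play the role of the twist words; I would rewrite the relator as a conjugate of $u\,v^{t}\,u'\,v^{-t}$ and match it with the even-twist normal form.

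I expect the main obstacle to be the second step: finding explicit substitutions that carry the edge-cycle relators to the Wirtinger normal form, while getting the correct twist count and chirality, that is, distinguishing $(m+3)_1$ from $(m+3)_2$ through the parity of the twists. The two words are not visibly conjugate. My first attempt would be to identify meridians: find elements conjugate to $x_1$, or to $x_1x_2^{-1}$, whose images generate the group and satisfy the meridian relation. If that stalls, I would fall back on computing the Alexander polynomial from the Fox derivative of the single relator. This gives $t\,x^2-(2t+1)x+t$ or its even analogue, which pins down the twist number. It is not a complete invariant, so I would then combine it with a direct small-case check of the presentations and an induction on $t$ that tracks how an extra factor $W$ corresponds to adding two half-twists.
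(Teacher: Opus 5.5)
Your overall route is the paper's: Theorem~\ref{Th:2.2} and Corollary~\ref{f.g.vertex.complement}, a two-generator one-relator form of the twist-knot group, an isomorphism between the two groups, and then Lemmas~\ref{not composite}, \ref{prime_knot} and \ref{equivalent_knot}.

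\textbf{The gap.} The one step with real content is never carried out. That step is an explicit isomorphism between the edge-cycle group and the twist-knot group. You list moves you would try and then flag the step as the main obstacle.

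\textbf{Why the fallback fails.} Lemma~\ref{prime_knot} needs an actual group isomorphism. The Alexander polynomial does not determine a prime knot: $6_1$ and $9_{46}$ both have $2x^2-5x+2$. So polynomial computations, small-case checks and an unspecified ``induction on $t$'' do not identify $K_m$.

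\textbf{Two smaller slips.} First, the polynomial you quote is off by an index. With $m=2t+1$ as in your second step, the relator gives $(t+1)x^2-(2t+3)x+(t+1)$. Your formula gives a unit for $m=1$, whereas $4_1$ has $x^2-3x+1$. For $m=2t$ one gets $(t+1)x^2-(2t+1)x+(t+1)$. Second, for $m=0$ the relation $x_1x_2x_1=x_2^2$ becomes the braid relation only after the Nielsen move $x_2\mapsto x_1x_2$. A conjugation does not do it: the exponent sums of the two relators differ.

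\textbf{How the paper closes the gap.} It simply writes the substitution down.
\begin{itemize}
\item For odd $m$, it puts $x=x_2x_1^{-1}$, $y=x_1^{-1}$ into the $n_1$ relator \eqref{eq:knot-group-odd}, and analogously into \eqref{eq:knot-group-even}. This gives exactly $(xy^{-1}x^{-1}y)^{r}x(yx^{-1}y^{-1}x)^{r}$, which is the edge-cycle relator with $r=t+1$.
\item For even $m$, it puts $x_1\mapsto x$, $x_2\mapsto xy^{-1}$ into the edge-cycle relator. After a cyclic rewriting this matches \eqref{r_is_odd} or \eqref{r_is_even}.
\end{itemize}

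\textbf{Your meridian idea also works.} Take $a=x_1^{-1}$, and take $b=x_2x_1^{-1}$ for odd $m$ or $b=x_2^{-1}x_1$ for even $m$. These are conjugates of $x_1^{\pm1}$ and $(x_1x_2^{-1})^{\pm1}$, as you guessed. Set $w=ba^{-1}b^{-1}a$. Using the identity $wa^{-1}ba=b$, the edge-cycle relator is conjugate to $w^{-k}a^{-1}w^{k}b$, that is, to the relation $aw^{k}=w^{k}b$. Here $k=t+1$ for odd $m$ and $k=-(t+1)$ for even $m$. This is exactly the normal form of your first step. It also shows that the $(m+3)_1$ versus $(m+3)_2$ bookkeeping is just the sign of $k$.
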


\medskip
\noindent\textbf{\emph{Note:}}
The knots $3_1$, $(m+3)_2$, and $(m+3)_1$, for the corresponding values of $m$, are labeled according to the conventions of the \href{https://katlas.org/wiki/The_Rolfsen_Knot_Table}{Rolfsen knot table}.

\section{Computation of the Fundamental Group of the Twist Knot Complement}\label{fundamental_group_knot}
In this section, we calculate the fundamental group of the twist knot complement. To do this computation, we first deform the twist knot in a suitable manner that simplifies the analysis of the fundamental group. We consider two cases, according to the choice of $n$: the case of an \(n_1\)-crossing twist knot, where \(n\) is even, and the case of an \(n_2\)-crossing twist knot, where \(n\) is odd. For notational convenience, we fix an orientation of the resulting diagram and assign labels to the arcs according to the convention illustrated in Figure \ref{General_knot}.

\begin{figure}[ht]
    \centering

    \begin{subfigure}[t]{0.45\textwidth}
        \centering
        \begin{tikzpicture}[scale=1pt, x=0.80pt, y=0.80pt, yscale=-1, xscale=1, every picture/.style={line width=0.75pt}, scale=0.75]
            \useasboundingbox (0,2470) rectangle (320,2800);
           
\draw    (73.39,2715.76) .. controls (12,2644) and (182.96,2673.42) .. (193.06,2686.34) ;
\draw    (210.15,2697.25) .. controls (261.44,2725.67) and (154.98,2584.28) .. (196.94,2575.67) ;
\draw    (210.93,2573.81) .. controls (247.45,2561.32) and (245.9,2488.12) .. (186.06,2528.31) ;
\draw    (65.61,2526.87) .. controls (71.83,2528.31) and (74.94,2528.31) .. (84.26,2527.59) ;
\draw    (65.61,2630.22) .. controls (109.91,2625.19) and (-8.21,2483.81) .. (49.3,2518.54) ;
\draw    (85.82,2678.3) .. controls (115.35,2778.78) and (-54.06,2729.26) .. (49.3,2631.94) ;
\draw    (80.38,2665.67) .. controls (71.05,2643.85) and (15.1,2590.03) .. (50.85,2574.24) ;
\draw    (66.39,2570.22) .. controls (97.48,2564.91) and (12.77,2489.55) .. (77.27,2504.62) ;
\draw    (180.62,2504.91) .. controls (195.39,2498.88) and (208.6,2514.67) .. (205.49,2511.8) ;
 \draw    (214.04,2525.72) .. controls (174.41,2575.1) and (245.9,2623.9) .. (211.71,2628.92) ;
\draw    (87.37,2728.11) .. controls (101.36,2738.01) and (242.79,2768.15) .. (189.17,2636.1) ;
\draw  [dash pattern={on 4.5pt off 4.5pt}]  (84.26,2506.34) .. controls (113.02,2506.2) and (160.42,2509.07) .. (171.3,2505.48) ;
\draw  [dash pattern={on 4.5pt off 4.5pt}]  (91.72,2527.87) .. controls (100.58,2528.45) and (144.1,2532.04) .. (174.41,2529.17) ;
\draw   (121.66,2664.13) .. controls (124.57,2667.35) and (127.62,2669.45) .. (130.83,2670.43) .. controls (127.47,2670.58) and (123.96,2671.85) .. (120.29,2674.24) ;
\draw   (203.96,2655.23) .. controls (205.11,2651.1) and (205.21,2647.47) .. (204.26,2644.35) .. controls (206.26,2646.97) and (209.3,2649.09) .. (213.41,2650.7) ;
\draw   (230.89,2531.49) .. controls (231.38,2527.24) and (230.89,2523.64) .. (229.47,2520.69) .. controls (231.85,2522.98) and (235.21,2524.63) .. (239.51,2525.62) ;
\draw   (134.43,2743.53) .. controls (131.36,2740.46) and (128.18,2738.52) .. (124.92,2737.7) .. controls (128.27,2737.39) and (131.72,2735.95) .. (135.25,2733.37) ;
\draw   (206.17,2551.96) .. controls (203.64,2555.47) and (202.27,2558.84) .. (202.06,2562.09) .. controls (201.1,2558.97) and (198.98,2555.98) .. (195.7,2553.11) ;
\draw   (21.16,2684.68) .. controls (17.96,2687.63) and (15.94,2690.69) .. (15.08,2693.83) .. controls (14.77,2690.59) and (13.3,2687.25) .. (10.66,2683.83) ;
\draw   (55.13,2551.83) .. controls (53.77,2555.9) and (53.5,2559.52) .. (54.29,2562.68) .. controls (52.42,2559.97) and (49.48,2557.71) .. (45.47,2555.92) ;
\draw   (60.61,2543.65) .. controls (61.84,2539.54) and (62,2535.92) .. (61.1,2532.78) .. controls (63.06,2535.43) and (66.07,2537.6) .. (70.13,2539.28) ;
\draw   (45.1,2621.67) .. controls (46.07,2617.5) and (46.01,2613.87) .. (44.93,2610.79) .. controls (47.04,2613.32) and (50.17,2615.31) .. (54.34,2616.76) ;

\draw (101.97,2639.83) node [anchor=north west][inner sep=0.75pt]    {$x_{1}$};
\draw (102.97,2742) node [anchor=north west][inner sep=0.75pt]    {$x_{3}$};
\draw (-14,2676.92) node [anchor=north west][inner sep=0.75pt]    {$x_{2}$};
\draw (18.42,2601.37) node [anchor=north west][inner sep=0.75pt]    {$x_{4}$};
\draw (220.34,2635.98) node [anchor=north west][inner sep=0.75pt]    {$x_{5}$};
\draw (20.41,2538.87) node [anchor=north west][inner sep=0.75pt]    {$x_{6}$};
\draw (214.37,2577.33) node [anchor=north west][inner sep=0.75pt]    {$x_{7}$};
\draw (115.9,2485) node [anchor=north west][inner sep=0.75pt]    {$x_{n}$};
\draw (111.88,2540) node [anchor=north west][inner sep=0.75pt]    {$x_{n+1}$};

        \end{tikzpicture}
        \vspace{4pt}
        \caption{$n_1$ Crossing Knot, where $n$ is even}
        \label{fig:n_1-crossing-knot}
    \end{subfigure}%
    \hfill
    \begin{subfigure}[t]{0.45\textwidth}
        \centering
        \begin{tikzpicture}[scale=1pt, x=0.80pt,y=0.80pt,yscale=-1,xscale=1, every picture/.style={line width=0.75pt}, scale=0.75]
            \useasboundingbox (300,2470) rectangle (620,2800);
            
\draw    (358.12,2513.73) ;
\draw    (417.31,2722.66) .. controls (379.2,2621.1) and (520.29,2702.51) .. (538.13,2702.51) ;
\draw    (548.67,2704.92) .. controls (625.7,2698.48) and (459.48,2605.78) .. (538.13,2575.15) ;
\draw    (546.24,2572.25) .. controls (570.56,2571.93) and (579.48,2559.03) .. (557.59,2541.3) ;
\draw    (392.17,2559.03) .. controls (460.29,2563.06) and (459.48,2472.79) .. (500.02,2503.42) ;
\draw    (432.72,2667.04) .. controls (434.34,2631.58) and (313.52,2583.21) .. (379.2,2560.65) ;
\draw    (374.33,2529.86) ;
\draw    (399.47,2634.8) .. controls (305.41,2749.26) and (439.2,2783.92) .. (435.96,2681.87) ;
\draw    (407.58,2625.13) .. controls (438.39,2611.43) and (294.87,2477.62) .. (452.18,2520.34) ;
\draw    (459.48,2524.37) .. controls (465.15,2526.79) and (478.13,2530.82) .. (491.1,2533.24) ;
\draw    (532.45,2561.45) .. controls (547.05,2570.32) and (563.27,2618.68) .. (534.07,2632.38) ;
\draw    (428.66,2732.33) .. controls (718.14,2796.81) and (427.85,2643.67) .. (529.21,2635.12) ;
\draw  [dash pattern={on 4.5pt off 4.5pt}]  (500.02,2503.42) -- (557.59,2541.3) ;
\draw  [dash pattern={on 4.5pt off 4.5pt}]  (491.1,2533.24) -- (532.45,2561.45) ;
\draw   (492.7,2751.12) .. controls (489.65,2747.02) and (486.43,2744.39) .. (483.03,2743.23) .. controls (486.6,2742.91) and (490.36,2741.11) .. (494.29,2737.83) ;
\draw   (373.66,2548.34) .. controls (376.36,2544) and (377.62,2540.05) .. (377.43,2536.48) .. controls (379.06,2539.66) and (382.13,2542.46) .. (386.66,2544.87) ;
\draw   (544.79,2661.88) .. controls (545.45,2656.82) and (544.95,2652.71) .. (543.3,2649.53) .. controls (546.11,2651.76) and (550.07,2653.05) .. (555.19,2653.38) ;
\draw   (452.89,2538.18) .. controls (447.78,2537.75) and (443.67,2538.43) .. (440.55,2540.21) .. controls (442.67,2537.32) and (443.78,2533.32) .. (443.9,2528.23) ;
\draw   (552.19,2579.6) .. controls (549.74,2584.09) and (548.72,2588.1) .. (549.11,2591.65) .. controls (547.29,2588.57) and (544.06,2585.96) .. (539.41,2583.81) ;
\draw   (469.33,2671.63) .. controls (471.86,2676.07) and (474.74,2679.06) .. (477.97,2680.61) .. controls (474.38,2680.51) and (470.44,2681.85) .. (466.14,2684.63) ;
\draw   (360.14,2691.71) .. controls (364.85,2689.68) and (368.14,2687.15) .. (370.04,2684.12) .. controls (369.55,2687.65) and (370.47,2691.69) .. (372.8,2696.24) ;
\draw   (383.4,2601.58) .. controls (383.53,2606.67) and (384.66,2610.66) .. (386.78,2613.54) .. controls (383.66,2611.77) and (379.55,2611.12) .. (374.44,2611.56) ;

\draw (448.48,2648.75) node [anchor=north west][inner sep=0.75pt]    {$x_{1}$};
\draw (464.69,2750) node [anchor=north west][inner sep=0.75pt]    {$x_{3}$};
\draw (338,2688.41) node [anchor=north west][inner sep=0.75pt]    {$x_{2}$};
\draw (336,2579.6) node [anchor=north west][inner sep=0.75pt]    {$x_{4}$};
\draw (560,2646.5) node [anchor=north west][inner sep=0.75pt]    {$x_{5}$};
\draw (379.55,2488.52) node [anchor=north west][inner sep=0.75pt]    {$x_{6}$};
\draw (480.49,2555) node [anchor=north west][inner sep=0.75pt]    {$x_{n-1}$};
\draw (526.32,2496.58) node [anchor=north west][inner sep=0.75pt]    {$x_{n}$};

        \end{tikzpicture}
        \vspace{4pt}
        \caption{$n_2$ Crossing Knot, where $n$ is odd}
        \label{fig:n_2-crossing-knot}
    \end{subfigure}

    \caption{Deformation of \(n_1\)- and \(n_2\)-crossing knots with labeling.}
    \label{General_knot}
\end{figure}

Now we aim to formulate the general form of these twist knot groups using the algorithm outlined above (see Algorithm \ref{Algorithm}). As observed in Lemma \ref{ref: wirtinger presentation}, the last relator is a consequence of the others and hence we can exclude it from computation. Thus, for an $n$-crossing knot, we require only $(n-1)$ relators. The procedure for finding relators is as follows:

\medskip
\noindent \textbf{Case $1$} (When $n$ is even):
Following the labeling of the arc as defined on Figure \ref{fig:n_1-crossing-knot}, we first consider an $n_1$-crossing knot complement, where $n$ is even. In this case, the first two relators are always
\begin{center}
$x_3=x_2^{-1}x_1x_2$\\
$x_4=x_1^{-1}x_2x_1$ 
\end{center}

\medskip
\noindent
If $n=4k$ for some $k\geq 1$, then the next $(n-4)$ relators are given by
\[
\left.
\begin{array}{l}
x_{4r+1} = x_{4r-1}^{-1}\; x_{4r-3}\; x_{4r-1}, \\[6pt]
x_{4r+2} = x_{4r}^{-1}\; x_{4r-2}\; x_{4r}, \\[6pt]
x_{4r+3} = x_{4r+1}\; x_{4r-1}\; x_{4r+1}^{-1}, \\[6pt]
x_{4r+4} = x_{4r+2}\; x_{4r}\; x_{4r+2}^{-1},
\end{array}
\right\}
\qquad \text{where } 1\leq r \leq \left\lfloor \frac{n-4}{4} \right\rfloor.
\]

The last relator is given by
$$x_{4k}= x_{4k-1}^{-1}\;x_{4k-3}\;x_{4k-1}.$$

\noindent
If $n=4k+2$ for some $k\geq 1$, then the next $(n-6)$ relators are given by
\[
\left.
\begin{array}{l}
x_{4r+1} = x_{4r-1}^{-1}\; x_{4r-3} \;x_{4r-1}, \\[6pt]
x_{4r+2} = x_{4r}^{-1} \;x_{4r-2}\; x_{4r}, \\[6pt]
x_{4r+3} = x_{4r+1}\; x_{4r-1}\; x_{4r+1}^{-1}, \\[6pt]
x_{4r+4} = x_{4r+2}\; x_{4r}\; x_{4r+2}^{-1},
\end{array}
\right\}
\qquad \text{where } 1\leq r \leq \left\lfloor \frac{n-6}{4} \right\rfloor.
\]

The last three relators are given by
\[
x_{4k+1} = x_{4k-1}^{-1}\; x_{4k-3} \;x_{4k-1}, \]
\[x_{4k+2} = x_{4k}^{-1}\; x_{4k-2}\; x_{4k}, \]
\[x_{4k+2}= x_{4k+1}\;x_{4k-1}\;x_{4k+1}^{-1}.\]




\medskip

Upon simplification, the group reduces to two generators and one relator of the following form:

\medskip
\noindent
If $n = 2(r+1)$ with $r$ odd, then

\begin{equation}
\pi_1(\mathbb{S}^{3} \setminus L) = \langle x_1, x_2 \mid (x_2x_1x_2^{-1}x_1^{-1})^r \, x_2x_1^{-1} \, (x_2^{-1}x_1x_2x_1^{-1})^r = 1 \rangle.
\label{eq:knot-group-odd}
\end{equation}

\medskip
\noindent
If $n = 2(r+1)$ with $r$ even, then

\begin{equation}
    \pi_1(\mathbb{S}^{3} \setminus L) = \langle x_1, x_2 \mid (x_2^{-1}x_1x_2x_1^{-1})^r \, x_2^{-1}x_1 \, (x_2^{-1}x_1^{-1}x_2x_1)^r = 1 \rangle.
\label{eq:knot-group-even}
\end{equation}
\bigskip

\noindent \textbf{Case $2$} (When $n$ is odd):
Now for finding the fundamental group of an $n_2$-crossing knot complement, where $n$ is odd and $n\geq3$, we consider the labeling of the arcs as defined on Figure \ref{fig:n_2-crossing-knot}. In this case, the first three relators are always
\begin{center}
$x_3=x_2x_1x_2^{-1}$\\
$x_4=x_1^{-1}x_2x_1$ \\
$x_5=x_3^{-1}x_1x_3$
\end{center}

\medskip
\noindent
If $n=4k+1$, where $k\geq 1$, then the next $(n-5)$ relators are given by
\[
\left.
\begin{array}{l}
x_{4r+2} = x_{4r}\; x_{4r-2}\; x_{4r}^{-1}, \\[6pt]
x_{4r+3} = x_{4r+1}\; x_{4r-1}\; x_{4r+1}^{-1}, \\[6pt]
x_{4r+4} = x_{4r+2}^{-1}\; x_{4r}\; x_{4r+2}, \\[6pt]
x_{4r+5} = x_{4r+3}^{-1}\; x_{4r+1}\; x_{4r+3},
\end{array}
\right\}
\qquad \text{where } 1\leq r \leq \left\lfloor \frac{n-5}{4} \right\rfloor.
\]

The last relator is given by
$$x_{4k+1}= x_{4k}\;x_{4k-2}\;x_{4k}^{-1}.$$

\noindent
If $n=4k+3$ for some $k\geq 1$, then the next $(n-7)$ relators are given by
\[
\left.
\begin{array}{l}
x_{4r+2} = x_{4r}\; x_{4r-2}\; x_{4r}^{-1}, \\[6pt]
x_{4r+3} = x_{4r+1}\; x_{4r-1}\; x_{4r+1}^{-1}, \\[6pt]
x_{4r+4} = x_{4r+2}^{-1}\; x_{4r}\; x_{4r+2}, \\[6pt]
x_{4r+5} = x_{4r+3}^{-1}\; x_{4r+1}\; x_{4r+3},
\end{array}
\right\}
\qquad \text{where } 1\leq r \leq \left\lfloor \frac{n-7}{4} \right\rfloor.
\]

The last three relators are given by
\[
x_{4k+2} = x_{4k}\; x_{4k-2} \;x_{4k}^{-1}, \]
\[x_{4k+3} = x_{4k+1}\; x_{4k-1}\; x_{4k+1}^{-1}, \]
\[x_{4k+3}= x_{4k+2}^{-1}\;x_{4k}\;x_{4k+2}.\]

\noindent Thus, if $L$ is an $n_2$-crossing knot, where $n$ is odd and $n = 2r + 3$ for some $r \in \mathbb{N} \cup \{0\}$, then the fundamental group of the complement of $L$ is of the following form:

When $r$ is odd, then
\begin{equation}\label{r_is_odd}
    \pi_1(\mathbb{S}^{3}\setminus L)= \langle x_1, x_2\;|\; (x_2x_1^{-1}x_2^{-1}x_1)^r\,(x_2x_1^{-1})\,(x_2^{-1}x_1^{-1}x_2x_1)^{r+1} \rangle
\end{equation}

When $r$ is even, then
\begin{equation}\label{r_is_even}
    \pi_1(\mathbb{S}^{3}\setminus L)= \langle x_1, x_2\;|\; (x_1^{-1}x_2^{-1}x_1x_2)^{r+1}\,(x_1x_2^{-1})\,(x_1^{-1}x_2x_1x_2^{-1})^{r} \rangle
\end{equation}

\section{Proof of The Main Theorem} \label{main_theorem_proof} 
With these preliminaries established, we are now in a position to describe precisely how the vertex complement arises as a twist knot complement. Now in order to make this precise, we now prove the main theorem:

\bigskip

\noindent {\em Proof of} {\bf Main Theorem}: 
We have already shown in Theorem \ref{Th:2.2} that $M_0(\epsilon_m^{*})$ is homeomorphic to a knot complement in $\mathbb{S}^3$. In this proof, we determine, for various values of $m$, which specific knot complement in $\mathbb{S}^3$ it corresponds to. We divide the proof into the following three cases corresponding to the value of $m$.

\medskip

\begin{itemize}
    \item [\ding{228}] \noindent
\textbf{Case 1} ($m=0$): The fundamental group of the trefoil knot complement $(3_1)$ is well known and can be computed as
\begin{equation}\label{f.g.trefoil_knot}
\pi_1\!\left(\mathbb{S}^{3} \setminus (3_1)\right)
= \left\langle x_1, x_2 \;\middle|\; x_2 x_1 x_2 x_1^{-1} x_2^{-1} x_1^{-1} \right\rangle.
\end{equation}

If we perform the substitutions $x_1^{-1} = x$ and $x_2 = y^{-1}x$ in \cref{f.g.trefoil_knot}, the relator simplifies to
$y^{-1} y^{-1} x y x = 1$. Rewriting further by substituting $y = x_2$ and $x = x_1$, we recover precisely the relation appearing in \cref{f.g_vertex_complement1} for $t=0$. Consequently, when $m=0$, the fundamental group of $M_0(\epsilon_0^{*})$ is isomorphic to the fundamental group of the trefoil knot complement.

\item[\ding{228}]\noindent
\textbf{Case 2} ($m$ is odd and $m \ge 1$):
By Corollary \ref{f.g.vertex.complement}, we derive a presentation for the fundamental group of the vertex complement $M_0(\epsilon_m^{*})$, where $m = 2t+1$ with $t \in \mathbb{N} \cup \{0\}$. Specifically, the fundamental group is given by
\begin{equation}\label{odd:vertex-complement}
    \pi_1\bigl(M_0(\epsilon^{*}_m)\bigr)
        = \left\langle x_1, x_2 \;\middle|\;
        (x_2x_1^{-1}x_2^{-1}x_1)^{t+1}
        \, x_2 \,
        (x_1x_2^{-1}x_1^{-1}x_2)^{t+1} 
        \right\rangle.
\end{equation}

From the presentations obtained earlier in \cref{eq:knot-group-odd} and \cref{eq:knot-group-even}, we can likewise describe the fundamental groups of knot complements of type $n_1$. Here the parameter $n$ is given by $n = 2(r+1)$, where $r \ge 1$.

Consequently, our analysis yields two distinct cases arising from the knot complements and a single case corresponding to the vertex complement. We proceed by considering each subcase individually.

\begin{description}
    \item[\underline{Subcase $2a$}] ($r$ is odd):
    Substitute $x_2x_1^{-1} = x$ and $x_1^{-1} = y$ into \cref{eq:knot-group-odd}.  
    This gives the relator
    \begin{equation}\label{eq:subcase-odd}
        (xy^{-1}x^{-1}y)^{r} \, x \, (yx^{-1}y^{-1}x)^{r}.
    \end{equation}
    Replacing $x = x_2$, $y = x_1$, and $r = (t+1)$ in \cref{eq:subcase-odd} yields precisely the same relation as in \cref{odd:vertex-complement}.

    \medskip
    \item[\underline{Subcase $2b$}] ($r$ is even):
    Substitute $x_2^{-1}x_1 = x$ and $x_2^{-1} = y$ into \cref{eq:knot-group-even}.  
    This gives the relator
    \begin{equation}\label{eq:subcase-even}
        (xy^{-1}x^{-1}y)^{r} \, x \, (yx^{-1}y^{-1}x)^{r}.
    \end{equation}
    Again, substituting $x = x_2$, $y = x_1$, and $r = (t+1)$ in \cref{eq:subcase-even} yields exactly the same relation as in \cref{odd:vertex-complement}.
\end{description}

\noindent
Thus, for odd $m$, the fundamental group of $M_0(\epsilon_m^{*})$ is isomorphic to the fundamental group of $(m+3)_1$ knot complement.

 \item [\ding{228}]\noindent
\textbf{Case 3} ($m$ is even and $m \ge 2$):  
By Corollary~\ref{f.g.vertex.complement}, for $m=2t$ with $t\in\mathbb{N}$, the fundamental group of the vertex complement $M_0(\epsilon_m^{*})$ admits the presentation
\begin{equation}\label{even.vertex_complement}
\pi_1(M_0(\epsilon_m^{*}))=
\left\langle x_1, x_2 \;\middle|\;
(x_2^{-1}x_1x_2x_1^{-1})^{t}
\,(x_2^{-1}x_1x_2x_1x_2^{-1})\,
(x_1^{-1}x_2x_1x_2^{-1})^{t}
\right\rangle .
\end{equation}
Replacing $x_1$ by $x$, and $x_2$ by $xy^{-1}$ in \cref{even.vertex_complement}, we obtain the relator as 
\begin{equation}\label{even.vc.update}
        (yxy^{-1}x^{-1})^{t}(yxy^{-1}xyx^{-1})(y^{-1}xyx^{-1})^{t}.
\end{equation}
As in the previous cases, the comparison with knot group presentations leads to two distinct subcases arising from knot complements, together with a single case associated to the vertex complement. We treat these cases separately.

\begin{description}
    \item[\underline{Subcase $3a$}] ($t$ is odd and $t \ge 1$):
The relator in~\cref{even.vc.update} can be simplified to the equivalent form
\begin{equation}\label{3a}
    (yxy^{-1}x^{-1})^{t}\,(yx)\,(y^{-1}xyx^{-1})^{t+1}.
\end{equation}
Performing the substitutions $x=x_1^{-1}$, $y=x_2$, and $t=r$ in~\cref{3a}, we recover precisely the defining relation appearing in~\cref{r_is_odd}.

\item[\underline{Subcase $3b$}] ($t$ even and $t \ge 2$):

Similarly, the relator in~\cref{even.vc.update} admits the equivalent expression
\begin{equation}\label{3b}
    (x^{-1}yxy^{-1})^{t+1}\,(xy)\,(x^{-1}y^{-1}xy)^{t}.
\end{equation}
Upon substituting $x=x_1$, $y=x_2^{-1}$, and $t=r$ in~\cref{3b}, we obtain exactly the same defining relation as in~\cref{r_is_even}.
 \end{description}
 This completes the argument and shows that, for even $m$, the fundamental group of $M_0(\epsilon_m^{*})$ is indeed isomorphic to that of the $(m+3)_2$ knot complement.
\end{itemize}

By Lemma \ref{not composite}, we know that if two knot complements have isomorphic fundamental groups, then both knots are either prime or composite. Since the fundamental groups of the knot complements in this family  are isomorphic to those of prime knot complements, it follows that all the knots in our case are prime. The result then follows from Lemma \ref{prime_knot}. 
\qed

\section{Ideal Triangulation of the Twist Knot Complement}\label{ideal_triangulation}
 
We now use the cellular structure together with the prescribed face identifications to construct a triangulation of the twist knot complement. In particular, the pseudomanifold structure of \(M(\epsilon_m^{*})\) naturally gives rise to an ideal triangulation of its vertex complement \(M_0(\epsilon_m^{*})\). The construction of this triangulation is formalized in the following theorem.

\begin{theorem}\label{theorem:ideal triangulation}
For each twist knot $K$, the complement $\mathbb{S}^3 \setminus K$ admits an ideal triangulation. Moreover, we develop an explicit algorithmic procedure to construct ideal triangulation of the complement $K$.
\end{theorem}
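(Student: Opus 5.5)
By the Main Theorem, every twist knot complement arises as some $M_0(\epsilon_m^{*})$. The knot $3_1$ corresponds to $m=0$, the knots $(m+3)_1$ to odd $m$, and the knots $(m+3)_2$ to even $m\ge 2$. By Lemma~\ref{equivalent_knot}, the twist knot with $n$ crossings is covered by taking $m=n-3$. It therefore suffices to triangulate the pseudomanifold $M(\epsilon_m^{*})$ so that the triangulation is compatible with the face pairings $\epsilon_{1,m}^{*},\epsilon_{2,m}^{*}$ and all of its vertices are identified to the single vertex of $M(\epsilon_m^{*})$. Deleting that vertex then gives an ideal triangulation of $M_0(\epsilon_m^{*})\cong \mathbb{S}^3\setminus K$, in the sense of the definition in the preliminaries.

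The procedure has three steps.

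\textbf{Step 1 (faces).} Triangulate the four boundary faces of the collapsed faceted $3$-ball in Figure~\ref{m.cell-PTFP} without adding new vertices. Each boundary face is a polygon whose vertices lie among $A,B,C,a_i,b_i,c_i,d_i$; fan-triangulate it from a chosen corner. The fans must be chosen compatibly: if face $F$ is paired with $F'$ by $\epsilon_{j,m}^{*}$, then the fan on $F'$ is the image of the fan on $F$ under the vertex correspondence listed in the definition of $\epsilon_{j,m}^{*}$. Since each pairing is a combinatorial isomorphism of polygons, it is enough to triangulate one face of each pair and transport the diagonals. This is what makes the face identifications become affine identifications of $2$-simplices.

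\textbf{Step 2 (interior).} Cone the boundary triangulation from one boundary vertex, for instance $C$. This avoids an interior vertex, which would otherwise be a second vertex class with a sphere link that has to be removed. Concretely, every boundary triangle not containing $C$ spans a tetrahedron with $C$. The ball is a topological $3$-ball whose boundary is a triangulated sphere, but it is not convex. The triangles containing $C$ are exactly the degenerate cones, and the gluing of the remaining cones along triangles through $C$ is forced.

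If coning fails to be legitimate because triangles through $C$ meet the others badly, I would first try the alternative: place an interior vertex $O$, cone all boundary triangles from $O$, and then remove $O$ by $2$-$3$/$3$-$2$ and $4$-$1$ moves (Section~\ref{pachner_moves}).

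\textbf{Step 3 (verification and algorithm).} Check the following:
\begin{itemize}
\item the face pairings of the tetrahedra are affine isomorphisms of triangles;
\item every vertex of every tetrahedron maps to the unique vertex, which follows from Lemma~\ref{lemma:two_edge} and the proof of Theorem~\ref{Th:2.2};
\item the quotient is homeomorphic to $M(\epsilon_m^{*})$, because we only subdivided cells compatibly with the pairing.
\end{itemize}
The algorithmic part is the explicit recipe just described, written as a function of $m$: list the fan diagonals on the faces $CAB$ and $CA$-$CB$ (the faces meeting at $C$), transport them by $\epsilon^{*}_{j,m}$, and cone. This also yields a count of tetrahedra that is linear in $m$. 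One can feed this gluing data to \texttt{Regina} to confirm the result and to compute isomorphism signatures.

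The main obstacle is Step 2 together with degeneracy. After the edge $CD$ collapses, several boundary faces have repeated vertex labels, most visibly in the $m=0$ case with edges $e_1,\dots,e_5$. The faces become bigons or monogon-like cells, so fans and cones may produce degenerate tetrahedra or non-embedded simplices. I would handle this with the generalized ("bent") tetrahedra allowed in Section~\ref{pachner_moves}, tracking edges rather than vertex labels, as in the $e_i$ relabeling. I would treat $m=0$ and small $m$ by hand and then give the uniform pattern for general $m$.
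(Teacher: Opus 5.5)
\paragraph{Verdict.} There is a genuine gap in Step 2: coning from $C$ does not triangulate the ball, and your fallback leaves the vertex removal unjustified.

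\paragraph{Why Step 2 fails.} Your reduction to triangulating $M(\epsilon_m^{*})$ compatibly with the pairings agrees with the paper, and so does Step 1 for $m\ge 1$. The paper fans $CDA$, $CDB$, $CAB$ from $C$, and fans $DAB$ from $b_m$ because $\epsilon_{2,m}^{*}$ sends $C\mapsto b_m$. For $m\ge1$ all four faces are honest polygons with distinct vertices, so repeated labels are not the real obstacle. The real obstacle is in Step 2. Coning a ball from a boundary vertex $v$ reproduces the prescribed boundary triangulation only when the star of $v$ in $\partial B$ is an embedded disk, i.e.\ the link of $v$ is a simple cycle. With fans from $C$, this fails for $v=C$:
\begin{itemize}
\item $C$ is joined to $A$, and to $B$, by two distinct diagonals lying in different faces.
\item The two triangles at $a_m$, one in $CDA$ and one in $CAB$, share two edges. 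So the link of $C$ backtracks along the $a_i$ and the $c_i$.
\item The only boundary triangles avoiding $C$ are the $2m$ triangles of $DAB$ other than $[b_m d_1 C]$. Your cone therefore has $2m$ tetrahedra with $4m+2$ boundary triangles, while $\partial B$ has $8m+2$. For $m=1$ you get $[Cb_1AB]$ and $[Cb_1Bd_1]$ against $10$ boundary triangles.
\item The remaining triangles through $C$ are faces of no tetrahedron, so the pairings $\epsilon^{*}$ cannot be transferred to tetrahedra.
\item The cone faces $[Cb_1A]$ and $[CAB]$ share a single edge $CA$. That edge would have to be both the $CDA$-diagonal and the $CAB$-diagonal.
\end{itemize}
So ``the gluing along triangles through $C$ is forced'' is false. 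Your fallback does not close the gap either: nothing shows that $2$-$3$/$3$-$2$ moves can bring the interior vertex $O$ to degree $4$ so that a $4$-$1$ move applies.

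\paragraph{The missing idea and the paper's route.} What is missing is the choice of apex. The vertex $b_m$, the apex of the fan on $DAB$, has as its link the simple cycle $C,b_{m-1},\dots,b_1,A,B,d_m,\dots,d_1$. Coning the other $6m$ boundary triangles from $b_m$ is therefore a genuine triangulation of the ball with no new vertex. After the pairings it is an ideal triangulation of $M_0(\epsilon_m^{*})$ for $m\ge 1$; $c_1$ works as well.

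The paper instead places an interior vertex $P$, cones every boundary triangle from it to get a four-colourable two-vertex triangulation, and then contracts an edge at $P$. Contracting $Px$ produces exactly the cone from $x$ over the boundary triangles not containing $x$. So the paper's final step is your Step 2 done in two stages, and it is legitimate only for an endpoint such as $b_m$, not for $C$.

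Finally, for $m=0$ the faces $CDA$ and $CDB$ are bigons, so triangulating ``without new vertices'' is impossible. The paper inserts $v_1,v_2$ in their interiors. The resulting finite vertex must also be eliminated before you have an ideal triangulation.
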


\begin{proof}
We begin with the cell structure of \(M(\epsilon_m^{*})\). To define a simplicial structure on \(M_0(\epsilon_m^{*})\), we first specify a simplicial structure on the faces of the faceted ball (Figure \ref{collapsed faceted $3$-ball}) in which the edge \(CD\) is collapsed to a single vertex \(C\). The resulting faceted \(3\)-ball has four faces. Before proceeding with the case-by-case analysis, we fix some notation that will be used throughout the proof. We denote a face by \([v_1 v_2 \cdots v_n]\), meaning that its boundary consists of the vertices \(v_1, v_2, \ldots, v_n\).

We first consider the case \(m \in \mathbb{N}\). In this case (Figure \ref{m.cell-PTFP}), the boundary faces $[Ca_ma_{m-1}\ldots a_1$ $Ab_1b_2\ldots b_m]$, and $[Cc_1c_2\ldots c_mBd_md_{m-1}\ldots d_1]$ are identified with each other. We consider \(C\) as the main vertex and assign it color \(1\). To define a simplicial structure, we then join \(C\) to all other vertices of both faces by adding edges to those vertices which are not adjacent to \(C\), and we alternately assign colors \(2\) and \(3\) to the remaining vertices. Thus, both faces are triangulated in a three-colorable way. To triangulate the other two faces $[Ca_ma_{m-1}\ldots a_1ABc_mc_{m-1}\ldots c_1]$, and $[Cb_mb_{m-1}\ldots b_1ABd_md_{m-1}\ldots d_1]$, we proceed as follows. For the first face, draw edges from $C$ to all the vertices of the face that are not adjacent to $C$, assign $1$ color to $C$, and assign colors $2$, and $3$ alternately to remaining vertices of the face. For the second face, since $C$ is identified with $b_m$ under the face-identification map $\epsilon_{2,m}^{*}$, we draw edges from $b_m$ to all the vertices of the face that are not adjacent to $b_m$, assign color $1$ to $b_m$, and alternately assign colors $2$, and $3$ colors to the remaining vertices. In this way, all faces of the faceted $3$-ball admit triangulations that are compatible with a proper three-coloring.

Now we consider the case for $m=0$. In this case (see Figure~\ref{0.faceted_cell}), the boundary face with edges \(e_1\) and \(e_2\) is identified with the boundary face having edges \(e_3\) and \(e_4\) via the face-identification map \(\epsilon_{1,m}^{*}\). To give simplicial structure on the faces, we introduce new vertices $v_1,v_2$ in the interiors of the respective faces. We then subdivide each face by joining \(v_1\) to the vertices \(C\) and \(A\), and similarly by joining \(v_2\) to the vertices \(C\) and \(B\). This subdivision yields a simplicial decomposition of the faces. Since \(C\) is the common vertex of these two faces, we assign color \(1\) to \(C\). The vertices \(A\) and \(B\) are assigned color \(2\), while the newly introduced vertices \(v_1\) and \(v_2\) are assigned color \(3\). Next, we consider the remaining boundary face whose edges are \(e_1, e_5, e_3\), which is identified, via the face-identification map $\epsilon_{2,m}^{*}$, with the boundary face having edges \(e_5, e_4, e_2\). These two faces are already $2$-simplices. We now assign the colours $1,2,3$ to the vertices $C,A,B$, respectively, on the face with edges $e_1, e_5, e_3$. For the face with edges $e_5, e_4, e_2$, we assign the colours $1,2,3$ to the vertices $A,C,B$, respectively. Thus the boundary faces receive a simplicial structure that is compatible with the given face identifications and admits a proper three-colouring.

To obtain a simplicial structure on the faceted $3$-ball, we place a vertex \( P \) in its interior of the faceted $3$-ball and join $P$ to each boundary vertex by an edge. If we assign the color \( 0 \) to the vertex \( P \), then the resulting triangulation becomes four-colorable. After performing the face identifications prescribed by $\epsilon_m^{*}$, this triangulated and collapsed faceted $3$-ball induces a simplicial structure on $M_0(\epsilon_m^{*})$. The resulting triangulation contains exactly two vertices. Finally, by contracting an edge incident to the interior vertex $P$, we obtain a one-vertex ideal triangulation of $S^3 \setminus K$.
\end{proof}

Using this triangulated model, we simplify the triangulation of each twist knot complement within \texttt{Regina}. This process yields a one-vertex ideal triangulation for every twist knot complement under consideration. Below, in Table \ref{table}, we present the resulting one-vertex ideal triangulations of $M_0(\epsilon_m^{*})$ for $0 \leq m \leq 14$, together with their corresponding isomorphism signature.

\begin{table}[ht]
\centering
\renewcommand{\arraystretch}{1.3}
\begin{tabular}{|
>{\centering\arraybackslash}p{2.6cm}|
>{\centering\arraybackslash}p{2cm}|
>{\centering\arraybackslash}p{2.2cm}|
>{\centering\arraybackslash}p{5cm}|}
\hline
\makecell{\textbf{Knot}\\\textbf{Complement}} &
\textbf{\(f\)-vector} &
\textbf{Volume} &
\makecell{\textbf{Isomorphic}\\\textbf{Signature}} \\
\hline
$3_1$  & (1, 2, 4, 2)   & possibly zero & {\footnotesize cPcbbbadu} \\
\hline
$4_1$  & (1, 2, 4, 2)   & 2.02988321    & {\footnotesize cPcbbbiht} \\
\hline
$5_2$  & (1, 3, 6, 3)   & 2.82812209    & {\footnotesize dLQbcccdero} \\
\hline
$6_1$  & (1, 4, 8, 4)   & 3.16396323    & {\footnotesize eLPkbcddddcwjb} \\
\hline
$7_2$  & (1, 4, 8, 4)   & 3.33174423    & {\footnotesize eLAkbccddmejln} \\
\hline
$8_1$  & (1, 5, 10, 5)  & 3.42720525    & {\footnotesize fLLQcacdeeenkmxsn} \\
\hline
$9_2$  & (1, 5, 10, 5)  & 3.48666015    & {\footnotesize fLAMcbccdeemejlhj} \\
\hline
$10_1$ & (1, 6, 12, 6)  & 3.52619599    & {\footnotesize gLLPQcdcfeffhsnlqws} \\
\hline
$11_2$ & (1, 6, 12, 6)  & 3.55381992    & {\footnotesize gLALQbcbefffxxjntxp} \\
\hline
$12_1$ & (1, 7, 14, 7)  & 3.57388254    & {\footnotesize hLALPkbcbegfgghhjslqws} \\
\hline
$13_2$ & (1, 7, 14, 7)  & 3.58891392    & {\footnotesize hLAMzkbccdefggmejlhhhj} \\
\hline
$14_1$ & (1, 8, 16, 8)  & 3.60046726    & {\footnotesize iLAwLQccbbdfhghhhhkhjeqsv}\\
\hline
$15_2$ & (1, 8, 16, 8) & 3.60953917 &
{\footnotesize iLAMLPcbcbdeghhhhhjhhwduh}\\
\hline
$16_1$ & (1, 9, 18, 9) & 3.61679305 & 
{\footnotesize jLAMLLQbcbdeghiiixxjxxjqenj}\\
\hline
$17_2$ & (1, 9, 18, 9) & 3.62268441 &
{\footnotesize jLAMzLQbcbdefhiiixxjxxxkulx}\\
\hline

\end{tabular}
\caption{ \(f\)-vectors, volumes, and isomorphic signatures associated with triangulations of knot complements.}\label{table}

\end{table}

\begin{remark}
The one-vertex ideal triangulation of the \((4_1)\)-knot complement obtained from our construction is isomorphic to the one-vertex ideal triangulation of the \((4_1)\)-knot complement described in \cite{dadd2016constructing}.
\end{remark}

From our algorithm and the software \texttt{Regina}, we obtain a one-vertex triangulation of the twist knot complement. In Matveev’s paper \cite{matveev2007algorithmic}, it is shown that any two one-vertex triangulations of the same manifold can be transformed into one another by a finite sequence of Pachner $2$-$3$ and $3$-$2$ moves. Consequently, we can construct infinitely many one-vertex triangulations of knot complements, which we state formally as follows.

\begin{corollary}
By applying Pachner the $2$-$3$ and $3$-$2$ moves, one can obtain infinitely many one-vertex triangulations of each twist knot complement.
\end{corollary}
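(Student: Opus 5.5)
The plan is to combine the existence of one one-vertex ideal triangulation of each twist knot complement with the Matveev--Piergallini connectivity result, and then to produce infinitely many pairwise non-isomorphic triangulations by increasing the number of tetrahedra.

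First, fix a twist knot $K$. By Theorem~\ref{theorem:ideal triangulation} and the Main Theorem, $\mathbb{S}^3\setminus K$ is homeomorphic to some $M_0(\epsilon_m^{*})$. This space carries a one-vertex ideal triangulation $\tau_0$. By Table~\ref{table}, $\tau_0$ has at least two tetrahedra, so the restriction on single-tetrahedron triangulations recalled in Section~\ref{pachner_moves} does not arise.

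Next, we produce triangulations with arbitrarily many tetrahedra. A $2$-$3$ move applies to any pair of distinct tetrahedra glued along a face whose union is a triangular bipyramid with five distinct vertices. Combinatorially, it is enough that the two tetrahedra are distinct, even when vertices or edges are identified in the quotient. The move replaces the two tetrahedra by three, adds one edge, and does not change the vertex set. It therefore preserves the property of being a one-vertex triangulation and does not change the homeomorphism type of $M_0$.

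Since $\tau_0$ has at least two tetrahedra and the quotient is connected, some face is shared by two distinct tetrahedra, so a $2$-$3$ move can be performed. After the move there are again at least two tetrahedra, and the three new tetrahedra share faces pairwise, so the step can be repeated indefinitely. Starting from $\tau_0$ with $f_3$ tetrahedra, this gives a sequence $\tau_0,\tau_1,\tau_2,\dots$ in which $\tau_k$ has $f_3+k$ tetrahedra. Triangulations with different numbers of tetrahedra cannot be isomorphic, so this yields infinitely many distinct one-vertex ideal triangulations of $\mathbb{S}^3\setminus K$.

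Matveev's theorem \cite{matveev2007algorithmic} is not needed for existence. It adds the stronger statement that every one-vertex triangulation of $\mathbb{S}^3\setminus K$ with at least two tetrahedra is reachable from $\tau_0$ by $2$-$3$ and $3$-$2$ moves. Its standard formulation is for closed manifolds, or equivalently for ideal triangulations of cusped manifolds. Here it should be applied to the pseudomanifold $M(\epsilon_m^{*})$ with its single ideal vertex, and I would cite it in that form.

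The main point requiring care is showing that a $2$-$3$ move is always available and never degenerate. That is, a face must be shared by two \emph{distinct} tetrahedra, rather than a tetrahedron glued to itself. I would argue this from connectivity of the dual graph together with $f_3\ge 2$. If every face of every tetrahedron were self-glued, each tetrahedron would form a closed component on its own, which contradicts connectivity.
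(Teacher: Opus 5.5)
Your proof is correct, and it takes a different route from the paper. In fact it supplies the step the paper leaves implicit.

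The paper takes the one-vertex triangulation coming from Theorem~\ref{theorem:ideal triangulation} and \texttt{Regina}. It then quotes Matveev's theorem that any two one-vertex triangulations of the same manifold are related by $2$-$3$ and $3$-$2$ moves, and concludes ``consequently'' that there are infinitely many. That connectivity statement does not by itself give infinitude: it would be equally true if only finitely many triangulations existed. What actually produces infinitely many is your observation about $2$-$3$ moves. In a connected triangulation with $f_3\ge 2$, a $2$-$3$ move across a face shared by two distinct tetrahedra is always available. It preserves the single vertex and the homeomorphism type of $M_0$, and it raises $f_3$ by one, so the iterates are pairwise non-isomorphic. Your route therefore needs only $2$-$3$ moves and no deep input. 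Matveev's theorem, in its ideal-triangulation (special-spine) form, adds a stronger structural statement: every one-vertex ideal triangulation of $\mathbb{S}^3\setminus K$ with at least two tetrahedra lies in the same move-class as $\tau_0$. The corollary does not need this.

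One point to tighten. Table~\ref{table} certifies $f_3(\tau_0)\ge 2$ only for the knots up to $17_2$. For an arbitrary twist knot, argue with an Euler characteristic count instead. A one-tetrahedron, one-vertex gluing has a vertex link with $4$ triangles, $6$ edges and $2E$ vertices, so $\chi=2E-2$. A torus link would therefore force a single edge class, and no orientable face pairing of one tetrahedron has only one edge class. This is also the fact behind the paper's lower bound of two tetrahedra for $3_1$ in Section~\ref{minimality}.
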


\section{Minimality for Ideal Triangulation}\label{minimality}
From our construction in Theorem~\ref{theorem:ideal triangulation}, one obtains a one-vertex ideal triangulation for every twist knot complement. For such triangulations, we call a triangulation \emph{minimal} if it realises the smallest possible number of tetrahedra. Proving minimality is, in general, highly nontrivial, and determining minimal triangulations of \(3\)-manifolds remains an open and active area of research in low-dimensional topology.

However, every twist knot complement, except that of the trefoil knot, is a cusped hyperbolic \(3\)-manifold, since it admits a complete, finite-volume hyperbolic structure with a single cusp by Thurston's hyperbolization theorem \cite{thurston86hyperbolic,thurston1997three}. In computational topology, it is often valuable to have a well-organized collection of examples on which to perform experiments and test conjectures. An exhaustive catalogue of manifold triangulations of a given size and type referred to as a \emph{census}, serves as such a useful references. In dimension $3$, the \textit{SnapPea} census of cusped finite-volume hyperbolic \(3\)-manifolds constitutes a crucial dataset that has significantly advanced research in low-dimensional topology. The original one-vetrex triangulation census was created in 1989 by Hildebrand and Weeks for \(n \leq 5\) \cite{hildebrand89}, and was subsequently extended by Callahan, Hildebrand, and Weeks to the cases \(n = 6,7\), with a further extension to \(n = 8\) carried out by Thistlethwaite \cite{callahan99,thistlethwaite10}. Further, in \cite{burton14}, the author extends the complete census upto to $9$ tetrahedra and also presents the first list of all minimal triangulations of all census manifolds. The complete database, which includes all $229112$ minimal triangulations of the census manifolds upto $9$ tetrahedra, is available and can be downloaded from the webpage maintained at \href{https://people.smp.uq.edu.au/BenjaminBurton/code/#hypcensus}{\url{http://www.maths.uq.edu.au/\textasciitilde bab/code/}}. 

Using this census, we verified that all triangulations listed in Table~\ref{table}, except that of the trefoil knot, are indeed minimal. For completeness, we also provide below the relevant data for each manifold, as reported in the census database followed by \emph{the name of the manifold} (\textbf{MN}), \emph{its hyperbolic volume} (\textbf{V}), \emph{the length of its shortest geodesic} (\textbf{SG}), \emph{the Chern--Simons invariant} (\textbf{CS}), \emph{the first homology group} (\textbf{H}), and \emph{the full list of minimal triangulations (specified via their isomorphic signatures)} (\textbf{T, IS}):
\vspace{5pt}

\begin{table}[ht]
\centering
\renewcommand{\arraystretch}{1.3}
\begin{tabular}{|
>{\centering\arraybackslash}p{1.1cm}|
>{\centering\arraybackslash}p{2.3cm}|
>{\centering\arraybackslash}p{2.3cm}|
>{\centering\arraybackslash}p{2.3cm}|
>{\centering\arraybackslash}p{0.4cm}|
>{\centering\arraybackslash}p{0.4cm}|
>{\centering\arraybackslash}p{4.6cm}|}
\hline
\textbf{MN} & \textbf{V} & \textbf{SG} & \textbf{CS} &
\textbf{H} & \textbf{T} & \textbf{IS} \\
\hline
\texttt{m004} &
2.0298832128 &
1.0870701450 &
0.0000000000 &
$\mathbb{Z}$ &
1 &
\texttt{{\footnotesize cPcbbbiht}} \\
\hline
\texttt{m015} &
2.8281220883 &
0.5623991486 &
0.1532041333 &
$\mathbb{Z}$ &
1 &
\texttt{{\footnotesize dLQbcccdero}} \\
\hline
\texttt{m032} &
3.1639632289 &
0.3306355216 &
0.1559770167 &
$\mathbb{Z}$ &
2 &
\makecell{
\texttt{{\footnotesize eLAkbccddmejdj}}\\[2pt]
\texttt{{\footnotesize eLPkbcddddcwjb}}
} \\
\hline
\texttt{m053} &
3.3317442316 &
0.2171014650 &
0.0551535349 &
$\mathbb{Z}$ & 
1 &
\makecell{\texttt{{\footnotesize eLAkbccdeemejln}}} \\
\hline
\texttt{m074} &
3.4272052463 &
0.1535786928 &
0.2221331208 &
$\mathbb{Z}$ &
2 &
\makecell{
\texttt{{\footnotesize fLAMcbccdeemejdhn}}\\[2pt]
\texttt{{\footnotesize fLLQcacdeeenkmxsn}}
} \\
\hline
\texttt{m094} &
3.4866601463 &
0.1144571825 &
0.0077074477 &
$\mathbb{Z}$ &
1 &
\makecell{
\texttt{{\footnotesize fLAMcbccdeemejlhj}}
} \\
\hline
\texttt{s016} &
3.5261959907 &
0.0886385677 &
0.2422223193 &
$\mathbb{Z}$ &
2 &
\makecell{
\texttt{{\footnotesize gLALQbcbefffxxjntqt}}\\[2pt]
\texttt{\footnotesize gLLPQcdcfeffhsnlqws}
} \\
\hline
\texttt{s023} &
3.5538199185 &
0.0706938616 &
0.0200384940 &
$\mathbb{Z}$ &
1 &
\makecell{
\texttt{{\footnotesize gLALQbcbefffxxjntqt}}
} \\
\hline
\texttt{v0016} &
3.5738825406 &
0.0577105383 &
0.2200160608 &
$\mathbb{Z}$ &
2 &
\makecell{
\texttt{{\footnotesize hLALPkbcbegfgghhjslqws}}\\[2pt]
\texttt{{\footnotesize hLAMzkbccdefggmejdhhhn}}
} \\
\hline
\texttt{v0025} &
3.5889139178 &
0.0480106150 &
0.0382116994 &
$\mathbb{Z}$ &
1 &
\texttt{{\footnotesize  hLAMzkbccdefggmejlhhhj}} \\
\hline
\texttt{{\footnotesize t00017}} &
3.6004672628 &
0.0405711769 &
0.2048694742 &
$\mathbb{Z}$ &
2 &
\makecell{
\texttt{{\footnotesize iLAMLPcbcbdeghhhhhjhhwddq}}\\[2pt]
\texttt{{\footnotesize iLAwLQccbbdfhghhhhkhjeqsv}}
} \\
\hline
\texttt{{\footnotesize t00027}} &
3.6095391745 &
0.0347394042 &
0.0510292520 &
$\mathbb{Z}$ &
1 &
\texttt{{\footnotesize  iLAMLPcbcbdeghhhhhjhhwduh}} \\
\hline
\texttt{{\scriptsize o9\_00017}} &
3.6167930474 &
0.0300825642 &
0.1938823148 &
$\mathbb{Z}$ &
2 &
\makecell{
\texttt{{\footnotesize jLAMLLQbcbdeghiiixxjxxjqenj}}\\[2pt]
\texttt{{\footnotesize jLAMzLQbcbdefhiiixxjxxxkuuh}}
} \\
\hline

\texttt{{\scriptsize o9\_00022}} &
3.6226844082 &
0.0263045083 &
0.0605518627 &
$\mathbb{Z}$ &
1 &
\texttt{{\footnotesize  jLAMzLQbcbdefhiiixxjxxxkulx}} \\
\hline

\end{tabular}

\caption{Record of selected cusped hyperbolic \(3\)-manifolds.}
\label{table:census}
\end{table}

It is now straightforward to observe that identifying any two pairs of faces of a tetrahedron always yields a closed, connected manifold. Consequently, the number of tetrahedra required to triangulate the complement of the trefoil knot $3_1$ is at least two. Therefore, the given triangulations of the trefoil knot $3_1$, each consisting of two ideal tetrahedra, achieve this lower bound and are minimal.

From our construction, it is evident that the triangulations of the twist knot complements from $3_1$ up to $17_2$ are minimal. Given that our construction is systematic and proceeds by introducing a single vertex step by step along the collapsed $3$-ball, it is natural to conjecture that this approach produces minimal triangulations for all twist knots.  

\begin{conjecture} \label{minimal_conjecture}
Based on the construction described in Theorem $\ref{theorem:ideal triangulation}$, the one-vertex ideal triangulations of twist knot complements obtained via our algorithm are minimal, i.e., they use the smallest possible number of tetrahedra among all ideal triangulations of the same complement.
\end{conjecture}

\section{Conclusion and Future Exploration}\label{conclusion}
This work develops a unified topological framework, different from Thurston’s algorithm, for twist knot complements, combining geometric insight with analytic precision. The results provide a coherent topological and combinatorial description of these complements. We conclude with remarks and open questions intended to motivate further research in the study of $3$-manifolds and knot complements.

Moreover, a \emph{geometric triangulation} is a much stronger notion than an ideal triangulation. It is an ideal triangulation of a cusped hyperbolic $3$-manifold $M$ in which each tetrahedron is positively oriented and endowed with a hyperbolic structure of strictly positive volume, such that the gluing yields $M$ with its complete hyperbolic metric. It is further conjectured that every cusped hyperbolic $3$-manifold admits a triangulation that is convex with respect to the hyperbolic metric, i.e., a geometric triangulation. By employing \texttt{Regina}, we verify that the triangulations obtained in our construction are indeed geometric, as stated in Remark \ref{geometric_remark}. Moreover, since our algorithmic procedure for constructing twist knot complements is uniform across all cases, we conjecture that all ideal triangulations produced by our method are geometric, as formulated in Conjecture \ref{geometric_conjecture}.

\begin{remark}\label{geometric_remark}
    We are given explicit ideal triangulations of the knot complements corresponding to $3_1$, $4_1$, $5_2$, $6_1$, $7_2$, $8_1$, $9_2$, $10_1$, $11_2$, $12_1$, $13_2$, $14_1$, $15_2$, $16_1$ and $17_2$. Using the software \texttt{Regina}, we verify that each of these given ideal triangulations is \textit{geometric}, that is, all tetrahedra are positively oriented. Furthermore, by applying Pachner moves (specifically, $2$-$3$ and $3$-$2$ moves), we can obtain infinitely many distinct geometric ideal triangulations for each of these knot complements.
\end{remark}

\begin{conjecture} \label{geometric_conjecture}

Following the discussion in Remark $\ref{geometric_remark}$, since our procedure for obtaining the family of twist knots is both unique and consistent, we further conjecture that the ideal triangulations of the twist knot complements produced by our algorithm are geometric. In other words, for each twist knot generated through this construction, the associated ideal triangulation consists entirely of positively oriented tetrahedra.
\end{conjecture}

\begin{remark}
    Apart from these developments, the authors in \cite{cannon2000introduction} posed the following fundamental question concerning partial twisted face-pairing constructions:
\begin{quote}
\textbf{Question 1'.} \emph{Which knot complements are partial twisted face-pairing manifolds?}
\end{quote}
Since our model identification naturally arises from a construction of the partial face-pairing type, our results provide a partial answer to this question. 
In particular, we demonstrate that all twist knot complements can be realized as partial twisted face-pairing manifolds.
\end{remark}

The construction of triangulations for $3$-manifolds, together with the problem of identifying when such triangulations are minimal, has been a longstanding focus in combinatorial and geometric topology. In this work, we have developed an explicit and systematic construction of ideal triangulations for twist knot complements and shown that, in certain cases, these triangulations are facet-minimal. These results naturally prompt the following fundamental question:

\begin{question}
Can the ideal triangulations produced by our construction be shown to be minimal through a finite sequence of admissible transformations, such as Pachner moves?
\end{question}
\noindent We expect that a more detailed combinatorial and geometric investigation of these triangulations, potentially incorporating an analysis of bistellar moves and their influence on volume and shape parameters, could provide additional insight into this problem.

\begin{question}
Is it possible to identify a faceted $3$-ball and an associated system of face-pairing identifications that produce analogous topological constructions for other families of knot or link complements?
\end{question}

\begin{question}
As noted in~\cite{budney12}, triangulations of $2$-knots can be constructed. A natural question is whether the present construction can be extended to a faceted $4$-ball to produce a corresponding topological model for $2$-knot complements.
\end{question}

\noindent From a topological perspective, such an extension would provide a higher-dimensional analogue of the present $3$-manifold construction, potentially yielding new combinatorial models for $2$-knots in $\mathbb{S}^4$. Geometrically, it could offer insights into the relationships among piecewise-linear $4$-manifolds, handle decompositions, and smooth or locally flat embeddings of $2$-spheres in $\mathbb{S}^4$. Developing this framework would enhance our understanding of the interaction between combinatorial and geometric structures in higher-dimensional knot theory.

\bigskip

 \noindent {\bf Acknowledgement:} The second and third authors are supported by the institute fellowship at IIT Delhi, India. The authors thank the free version of ChatGPT for assistance in checking typographical errors and proofreading the manuscript.
 
\medskip

\noindent {\bf Data availability:} The authors declare that all data supporting the findings of this study are available within the article.

\medskip

\noindent {\bf Declarations}

\smallskip

\noindent {\bf Conflict of interest:} No potential conflict of interest was reported by the authors.


\begin{thebibliography}{999}

\bibitem{aribi23} F. B. Aribi, F. Guéritaud and E. P. Nakazawa, Geometric triangulations and the Teichm\"uller TQFT volume conjecture for twist knots, {\em Quantum Topol.} {\bf 14(2)} (2023), 285--406. \href{https://doi.org/10.4171/qt/178}{MR4636640}

\bibitem{bst1}  B. Basak,  M. Saha, and A. Trivedi, Topological and Combinatorial Constructions of \((p,2)\)-Torus Knot Complements, 2026, 19 pages (preprint).

\bibitem{BS2020}  B. Basak and E. Swartz, Three-dimensional normal pseudomanifolds with relatively few edges, {\em Adv. Math.} ~\textbf{365} (2020) 107035, 25pp.   \href{https://doi.org/10.1016/j.aim.2020.107035}{MR4064764}

\bibitem{bing59} R. H. Bing, An alternative proof that $3$-manifolds can be triangulated, {\em Ann. of Math.} {\bf 69} (1959), 37--65. \href{https://doi.org/10.2307/1970092}{MR0100841}

\bibitem{budney12} R. D. Budney, B. A. Burton and J. A. Hillman, Triangulating a Cappell-Shaneson knot complement, {\em Math. Res. Lett.} {\bf 19(5)} (2012), 1117--1126. \href{https://dx.doi.org/10.4310/MRL.2012.v19.n5.a12}{MR3039834}

\bibitem{burton14} B. A. Burton, The cusped hyperbolic census is complete, (2014). \href{https://doi.org/10.48550/arXiv.1405.2695}{arXiv.1405.2695}

\bibitem{burton2013regina}
B. A. Burton, R. Budney, W. Pettersson, \emph{et al.},
\emph{Regina: Software for 3-manifold topology and normal surface theory},
Available at \url{https://regina-normal.github.io/}, 1999–2013.

\bibitem{callahan99} P. J. Callahan, M. V. Hildebrand and J. R. Weeks, A census of cusped hyperbolic $3$-manifolds, {\em Math. Comp.}, {\bf 68(225)} (1999), 321--332. \href{https://www.ams.org/journals/mcom/1999-68-225/S0025-5718-99-01036-4/}{MR1620219}

\bibitem{cannon2000introduction}  J. W. Cannon, W. J. Floyd, and W. R. Parry, Introduction to twisted face-pairings, {\em Math. Res. Lett.},{\bf 7(4)} (2000), 477--492. \href{https://dx.doi.org/10.4310/MRL.2000.v7.n4.a14}{MR1783626}



\bibitem{dadd2016constructing} B. Dadd and A. Duan, Constructing infinitely many geometric triangulations of the figure eight knot complement, {\em Proc. Amer. Math. Soc.},{\bf 144(10)} (2016), 4545--4555. \href{https://doi.org/10.1090/proc/13076}{MR3531201}

\bibitem{FeustelWhitten1978} C. D. Feustel and W. C. Whitten Jr., Groups and complements of knots, {\em Canadian J. Math.}, {\bf 30(6)} (1978), 1284--1295. \href{https://doi.org/10.4153/CJM-1978-105-0}{MR0511562}



\bibitem{ibarra25} D. Ibarra, D. V. Mathews, and J. S. Purcell,
\emph{On geometric triangulations of double twist knots}, (2025). \href{https://arxiv.org/pdf/2104.03751}{arXiv:2504.09901}


\bibitem{mca1989knots} C. M. A. Gordon and J. Luecke, Knots are determined by their complements, {\em J. Amer. Math. Soc.}, {\bf 2(2)} (1989), 371--415. \href{https://doi.org/10.1090/S0894-0347-1989-0965210-7}{MR0965210}



\bibitem{hildebrand89} M. V. Hildebrand and J. R. Weeks, A computer generated census of cusped hyperbolic $3$-manifolds, {\em Computers and mathematics (Cambridge, MA, 1989)}, 53--59. \href{https://doi.org/10.1007/978-1-4613-9647-5_7}{MR1005959}

\bibitem{jaco20030} W. H. Jaco and J. H. Rubinstein, $0$-efficient triangulations of 3-manifolds, {\em  J. Differential Geom.}, {\bf 65(1)} (2003), 61--168. \href{https://doi.org/10.4310/jdg/1090503053}{MR2057531}

\bibitem{jaco2020minimal} W. H. Jaco, J. H. Rubinstein, J. Spreer, and S. Tillmann,  On minimal ideal triangulations of cusped hyperbolic 3-manifolds, {\em J. Topol.} {\bf 13(1)} (2020), 308--342. \href{https://doi.org/10.1112/topo.12127}{MR4138740}


\bibitem{matveev2007algorithmic} S. V. Matveev, {\em Algorithmic topology and classification of 3-manifolds}, second edition, Algorithms and Computation in Mathematics, 9, Springer, Berlin, 2007. \url{https://dl.acm.org/doi/10.5555/1209356}


\bibitem{moise52} E. E. Moise, Affine structures in $3$-manifolds. V. The triangulation theorem and Hauptvermutung, {\em Ann. of Math.}, {\bf 56} (1952), 96--114. \href{https://doi.org/10.2307/1969769}{MR0048805}

\bibitem{pachner1991pl} U. Pachner, P.L. homeomorphic manifolds are equivalent by elementary shellings, {\em  European J. Combin.}, {\bf 12(2)} (1991), 129--145. \href{https://doi.org/10.1016/S0195-6698(13)80080-7}{MR1095161}

\bibitem{piergallini1988standard} R. Piergallini, Standard moves for standard polyhedra and spines, {\em Rend. Circ. Mat. Palermo (2) Suppl.}, {\bf 18} (1988), 391--414. \href{https://hdl.handle.net/11581/244540}{MR0958750}


\bibitem{Poincare1} J.~W. Morgan and G. Tian, {\em Ricci flow and the Poincar\'e{} conjecture}, Clay Mathematics Monographs, 3, Amer. Math. Soc., Providence, RI, 2007 Clay Math. Inst., Cambridge, MA, 2007. \href{https://www.claymath.org/wp-content/uploads/2022/03/Ricci-pdf.pdf}{MR2334563}

\bibitem{Poincare2}  G. Perelman, Finite extinction time for the solutions to the Ricci flow on certain
three-manifolds, (2003). \href{https://doi.org/10.48550/arXiv.math/0307245}{arXiv:math/0307245}

\bibitem{rolfsen2003knots} D. Rolfsen, {\em Knots and Links}. American Mathematical Soc., 2003. \href{https://scispace.com/papers/knots-and-links-y6qvc0i2tv}{MR0515288}

\bibitem{schultens2014introduction} J. Schultens, {\em Introduction to 3-Manifolds}, volume 151. American Mathematical Soc., 2014. \href{https://doi.org/10.1090/GSM/151}{MR3203728}

\bibitem{thistlethwaite10} M. Thistlethwaite, \emph{Cusped hyperbolic manifolds with 8 tetrahedra}, available at \href{https://web.math.utk.edu/~mthistle/}{\url{https://web.math.utk.edu/~mthistle/}}, October 2010.


\bibitem{thistlethwaite25} M. B. Thistlethwaite, The enumeration and classification of prime 20-crossing knots, {\em  Algebr. Geom. Topol.} {\bf 25(1)} (2025), 329--344. \href{https://msp.org/agt/2025/25-1/p12.xhtml}{MR4877260}



\bibitem{thurston86hyperbolic} W. P. Thurston, Hyperbolic Structures on $3$-manifolds, I: Deformation of acylindrical manifolds, {\em Annals of Mathematics}, {\bf 124} (1986), 203--246. \href{https://doi.org/10.48550/arXiv.math/9801019}{MR0855294}

\bibitem{thurston1997three} W. P. Thurston, {\em Three-Dimensional Geometry and Topology, Vol. 1}, Princeton University Press, Princeton, 1997. \href{https://www.math.unl.edu/~jkettinger2/thurston.pdf}{MR1435975}

\end{thebibliography}
\end{document}